\newtheorem{theorem}{Theorem}
\newtheorem{lemma}{Lemma}
\newtheorem{corollary}{Corollary}
\newtheorem{observation}{Observation}
\newtheorem{problem}{Problem}
\newtheorem{proposition}{Proposition}
\newtheorem{remark}{Remark}
\newtheorem{claim}{Claim}
\newtheorem{conjecture}{Conjecture}
\newtheorem{definition}{Definition}
\newtheorem{case}{Case}
\newcommand\red[1] {{\color{red} #1}}
\numberwithin{equation}{section}
\tikzstyle{none}=[inner sep=0mm]
\tikzstyle{bluenode}=[fill=blue, draw=black, shape=circle, minimum size=0.2cm, 
\tikzstyle{blacknode}=[fill=black, draw=black, shape=circle, minimum 
\tikzstyle{rednode}=[fill={rgb,255: red,244; green,0; blue,0}, draw=black, 
\tikzstyle{square}=[draw=black, fill=yellow, shape=rectangle, minimum 
\tikzstyle{whitenode}=[fill={rgb,255: red,245; green,245; blue,245}, draw=black, shape=circle, minimum size=0.1cm, inner sep=0.5pt]
\tikzstyle{blackedge}=[-, draw=black, fill=none, line width=0.15mm]
\tikzstyle{blueedge}=[-, draw=blue, fill=none, line width=0.3mm]
\tikzstyle{blackedge_thick}=[-, draw=black, line width=0.45mm, fill=none]
\tikzstyle{black_thick}=[-, draw=black, line width=0.5mm, fill=none]
\tikzstyle{rededge}=[-,  line width=0.3mm,draw=red]
\tikzstyle{rededge_thick}=[-, line width=0.45mm, draw=red]
\tikzstyle{black_thick_opacity}=[-, -, draw={rgb,255: red,91; green,87; blue,84}, 
\tikzstyle{blue_thick}=[-, line width=0.5mm, draw=blue]
\newcommand{\proofend}{{\hfill$\Box$}}
\let\oldbibliography\thebibliography
\renewcommand{\thebibliography}[1]{%
  \oldbibliography{#1}%
  \setlength{\itemsep}{-8pt}%
  \setlength{\baselineskip}{8pt}
  \setlength{\lineskiplimit}{-\maxdimen}
}
\begin{document}

 \captionsetup[figure]{labelfont={bf},name={Fig.},labelsep=period}
	%\linenumbers 
	\title{4-connected 1-planar  chordal graphs are Hamiltonian-connected
	}
{\small
\author[1]{Licheng Zhang,\thanks{Email: lczhangmath@hnu.edu.cn}}
\author[2]{Yuanqiu Huang,\thanks{Corresponding author. Email: hyqq@hunnu.edu.cn.}}
\author[3]{Shengxiang Lv,\thanks{Email:lvsxx23@126.com}}
\author[4]{Fengming Dong\thanks{Email:fengming.dong@nie.edu.sg}}}

\affil[1]{\small School of Mathematics, Hunan  University, China}
\affil[2]{\small College of Mathematics and Statistics, Hunan Normal University, China}
\affil[3]{\small School of Mathematics and Statistics,Hunan University of Finance and Economics, China}
\affil[4]{\small National Institute of Education, Nanyang Technological University, Singapore}

%	\date{\today}
\date{}
\maketitle
	
\begin{abstract}
 Tutte proved that 4-connected planar graphs are Hamiltonian. 
It is unknown if there is an analogous result on $1$-planar graphs. 
 % A similar problem on 1-planar graphs remains unsolved: Is every 6-connected (or 7-connected) 1-planar graph  Hamiltonian?  
 In this paper,  we characterize 
 4-connected 1-planar chordal graphs,
 and show that all such graphs are  Hamiltonian-connected.
 A crucial tool used in our proof is a characteristic of 1-planar 4-trees. 
 %In addition, we  obtain some properties on 1-planar chordal graphs

\vskip 0.2cm
\noindent {\bf Keywords:}  1-planar graph, chordal graph, Hamiltonian-connected 

\noindent {\bf MSC:} 05C10, 05C40, 05C45, 05C62

\end{abstract}

%\tableofcontents

\section{Introduction}

Only finite simple connected graphs are considered in this paper.  
For a graph $G$,  let $V(G)$ and   $E(G)$ 
denote its vertex set and edge set.
For any subset $V_0$ of $V(G)$, 
the  subgraph of $G$ induced by $V_0$, 
denoted by $G[V_0]$, is the graph
with vertex set $V_0$ and 
edge set $\{uv\in E(G): u,v\in V(G)\}$.
Let $G-V_0$ denote the subgraph of $G$ induced by $V(G)\setminus V_0$
when $V_0\ne V(G)$.
A subset $S$ of  $V(G)$ is called 
a  \textit{separator} if  $G-S$ is disconnected, and a separator $S$ is 
called a {\it $k$-separator} if $|S|=k$.
For a non-complete graph $G$,
its {\it connectivity}, denoted by 
$\kappa(G)$, is defined to be 
the minimum value of $|S|$
over all separators $S$ of $G$,
and $\kappa(G)=|V(G)|-1$
if $G$ is connected.

A \textit{Hamiltonian path (resp. cycle) } in $G$ 
is a path (resp. cycle) in $G$ that visits every vertex of $G$ exactly
once.  A graph $G$ is called  \textit{Hamiltonian} if $G$ contains a Hamiltonian cycle. 
A graph $G$ is \textit{Hamiltonian-connected} if every two vertices in $G$ are connected by a Hamiltonian path. 
Clearly, all  Hamiltonian-connected graphs of orders at least three 
are Hamiltonian. 

A \textit{drawing} of a graph $G=(V,E)$ is a mapping $D(G)$ that assigns to each vertex in $V$ a distinct point in the plane  and to each edge $uv$ in $E$ a continuous arc connecting $D(u)$ and $D(v)$.  A drawing
is said to be \textit{good} if no edge crosses itself, no two edges cross more than once, and no two edges incident 
with a vertex cross each other. 
All drawings considered in this paper are good ones.
A graph is called {\it planar} if has a drawing so that no edges cross each other. 
The problem of Hamiltonicity of planar graphs  has received considerable attention. Whitney \cite{Whitney} showed that every 4-connected plane triangulation is Hamiltonian, and Tutte \cite{Tutte} extended
this conclusion to every 4-connected planar graph. Tutte's result has been strengthened in various ways. For example, Thomassen \cite{Thomassen} proved that every 4-connected planar graph is Hamiltonian-connected, 
and Thomas and Yu \cite{Thomas} proved that 4-connected projective-planar graphs are Hamiltonian. 
For more references, the reader 
may refer to  \cite{Ellingham,Lai,Ozeki2021,Ozeki}.

A graph is  called \textit{$1$-planar} if has a drawing so that any edge 
is crossed at most once, and   such a drawing is also called a 
\textit{$1$-plane graph}.
1-planar graphs are introduced by Ringel (1965) \cite{Ringel} in the connection with the problem of the simultaneous colouring of the vertices and faces of plane graphs.  
Since then, many properties of 1-planar graphs have been widely investigated (see \cite{Biedl2022, Brandenburg, Fabrici, Gollin,Liu,Suzuki} for example).  
It is known that 1-planar graphs have minimum degree at most 7,  and thus 
they are at most $7$-connected \cite{Fabrici2007}.
People try to establish  a foundational result on Hamiltonicity of highly connected 1-planar graphs, akin to Tutte's work on 4-connected planar graphs. A 1-planar graph $G$ with $|V(G)|\ge 3$ vertices has at most 
$4|V(G)|-8$ edges \cite{Bodendiek}. A 1-planar graph  is called {\it optimal} if  $|E(G)|=4|V(G)|-8$.  
Fujisawa et al. \cite{Fujisawa,Suzuki} showed that every optimal 1-planar graphs has  connectivity  4 or 6.   Hud\'{a}k et al.  \cite{Hudak}  proved that every optimal 1-planar is Hamiltonian.    Fabrici et al. \cite{Fabrici} proved that a 3-connected locally maximal 1-planar graph $G$ is Hamiltonian if it has at most three 3-vertex-cuts,
implying that 
%In particular, the authors  \cite{Fabrici} also showed that
all 4-connected maximal 1-planar graphs are Hamiltonian.
However, there are still many  non-Hamiltonian 1-planar graphs with connectivity 4 or 5 \cite{Biedlpre,Fabrici}.  
Actually, it is even unknown if 1-planar graphs with connectivity 6 or 7
are Hamiltonian.

\begin{problem}[\cite{Biedlpre,Fabrici}]
	Is every $6$-connected (or $7$-connected) $1$-planar graph is  Hamiltonian?
\end{problem}

A \textit{hole} is an induced cycle of length at least four. 
A graph is \textit{chordal} if it does not contain holes.  
In this article, we will establish the following conclusion
on the Hamiltonicity of $1$-planar graphs.

\begin{theorem}\label{4_conn}
	Let $G$ be a $1$-planar  graph. If $G$ is $4$-connected  and choral, then $G$ is Hamiltonian-connected.
\end{theorem}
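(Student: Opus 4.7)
The plan is to exploit the promised characterization of 1-planar 4-trees. The natural first step is to show that the hypotheses of 4-connectedness, 1-planarity, and chordality together force $G$ to be a 4-tree. Every minimal separator of a chordal graph is a clique, so in a 4-connected chordal graph every minimum separator is a $K_4$; combined with the edge bound $|E(G)|\le 4|V(G)|-8$ for 1-planar graphs, this scarcity of edges should force a rigid tree-like structure in which each vertex outside a base $K_5$ is simplicial onto a $K_4$, i.e., a 4-tree. The 1-planarity restricts further how these $K_4$'s may be glued together, and this is exactly the "characteristic of 1-planar 4-trees" the abstract refers to; I would invoke it as a black box and use its explicit list of building blocks.

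Once $G$ is known to be a 1-planar 4-tree, I would proceed by induction on $|V(G)|$. The base case is $G=K_5$, which is trivially Hamiltonian-connected. For the inductive step, fix two vertices $s,t\in V(G)$ between which a Hamiltonian path is required. Using the tree structure of the 4-tree, I would select a simplicial vertex $w$ of $G$ with $N(w)$ inducing a $K_4$ and $w\notin\{s,t\}$; such a $w$ is available because the "clique tree" of the 4-tree has at least two leaves, so at least two simplicial vertices exist. The graph $G-w$ inherits chordality and 1-planarity, and the characterization of 1-planar 4-trees should guarantee either that $G-w$ is again a 1-planar 4-tree (so induction applies) or that $G-w$ falls into a small family of exceptional graphs which must be treated by hand.

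The main obstacle is that removing a simplicial vertex in a 4-tree can drop the connectivity below 4, so the inductive hypothesis cannot simply be re-applied to $G-w$. I therefore expect to prove a stronger statement than Theorem~\ref{4_conn}, one tailored to the 4-tree induction: for any two vertices $s,t$ and any prescribed edge $e$ lying in a designated "boundary" $K_4$ of the 4-tree, there is a Hamiltonian $s$-$t$ path that uses $e$. With such an enriched inductive claim available for $G-w$, inserting $w$ is mechanical: take the guaranteed Hamiltonian $s$-$t$ path in $G-w$ that uses an edge $xy$ with $x,y\in N(w)$, and replace $xy$ by the two-edge path $x\,w\,y$. Since $N(w)$ is a $K_4$, the "prescribed edge" constraint is harmless at the level of $N(w)$, but it must be propagated correctly along the tree of $K_4$'s.

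The technical heart of the proof will therefore be bookkeeping: choosing the prescribed edge at each recursion step so that (i) the simplicial vertex $w$ to be peeled off can always be inserted, and (ii) when $s$ or $t$ happens to lie inside the $K_4$ receiving the insertion, the endpoints of the path still match. The 1-planar 4-tree characterization must be strong enough to produce, at every stage, a leaf $K_4$ far from $\{s,t\}$ with a compatible prescribed edge; ensuring this compatibility — together with handling a handful of small exceptional 1-planar 4-trees as base cases — is where I expect the real work to lie.
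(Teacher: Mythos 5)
Your overall reduction (4-connectedness plus chordality plus 1-planarity forces a 4-tree, then induct by peeling simplicial vertices) follows the same skeleton as the paper, but your inductive engine has a genuine gap. The specific structural payoff of 1-planarity -- which is what the paper's characterization (Proposition~\ref{main-4t} together with Lemma~\ref{set-phi}) actually delivers -- is that these 4-trees have \emph{exactly two} simplicial vertices. This immediately breaks your key step ``select a simplicial vertex $w$ with $w\notin\{s,t\}$'': when the prescribed endpoints $s,t$ happen to be precisely the two simplicial vertices, no such $w$ exists, and your insertion mechanism (replace an edge $xy$ of a Hamiltonian $s$--$t$ path of $G-w$ by $x\,w\,y$) cannot produce a path that \emph{ends} at a peeled vertex. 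The paper resolves exactly this difficulty without any prescribed-edge strengthening: Proposition~\ref{Ham-con} takes $u$ simplicial in $G$ and $a$ simplicial in $G-u$ (so $N_G[u]\subseteq N_G[a]$ and $|N_G(a)\setminus N_G[u]|\le 1$), assumes only that $G-u$ and $G-\{u,a\}$ are Hamiltonian-connected, and handles the endpoint cases ($u$ or $a$ among $\{s,t\}$) by rerouting through a neighbour $b$ or $c$ of $u$; Proposition~\ref{ktree-HC} then runs the induction purely inside the class of $k$-trees with two simplicial vertices, with no 1-planar bookkeeping at all.

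Your proposed strengthened statement is also not true as formulated: in $K_5$ there is no Hamiltonian $s$--$t$ path through the edge $st$, so the prescribed edge needs compatibility conditions relative to $\{s,t\}$, and maintaining such conditions ``along the tree of $K_4$'s'' is precisely the unproven heart of your plan -- and it is far from harmless here, since in the relevant graphs the clique tree is a path with only two leaf $K_4$'s, which may both be occupied by $s$ and $t$. A smaller but real omission: the reduction to 4-trees is not a consequence of edge scarcity alone. One must dispose of $K_6$ (5-connected, chordal, 1-planar, not a 4-tree, handled separately as a trivial case), and for $\kappa(G)=4$ and $n\ge 7$ one needs that $G$ has no 6-clique (the paper's Lemma~\ref{K6conn}) before invoking the chordal-graph fact that a 4-connected chordal graph without a 6-clique is a 4-tree (Lemma~\ref{prop1}); see Proposition~\ref{main0}. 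Finally, note that general 4-trees (e.g.\ $K_4$ joined to many independent vertices) are not even Hamiltonian, so without extracting the two-simplicial-vertex structure from the 1-planar characterization, no amount of prescribed-edge bookkeeping can make the induction close.
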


Note that the conclusion in Theorem~\ref{4_conn} does not hold 
if some condition is replaced by a weaker one. 
Some details are given in following remark. 

		\renewcommand{\theenumi}{\rm (\roman{enumi})}

\begin{remark}\label{r1}
	\begin{enumerate}

		\item There are non-Hamiltonian $4$-connected $1$-planar graphs (see 
		\cite{Hudak}); 
		
		\item It is easy to construct $4$-connected chordal graphs that are not Hamiltonian. For example, 
		the graph obtained from 
		a graph $H$ isomorphic to $K_4$ 
		by adding $r$ vertices $v_1, \dots, v_r$, where $r\ge 5$,  
		and adding edges joining 
		 each $v_i$ to every vertex in $H$ is not Hamiltonian. 
		
		\item 
	There exist $3$-connected  planar chordal graphs  that are not Hamiltonian \cite{Nishizeki}.
	We can also construct $3$-connected 
	$1$-planar chordal graphs that are neither planar nor Hamiltonian.
	 An example is shown in 
	  Fig. \ref{NH_ch1_planargraphs}.
	  It is easy to verify that this graph $G_0$ is
	  chordal, $1$-planar and $3$-connected, but  not $4$-connected. This graph is not 
	  Hamiltonian due to the fact that 
	  the subgraph of $G_0$ obtained by removing 
	  vertices $0,1, 3,5$ and $7$ has six components. 
	  Taking any one of such $1$-planar graphs with a $3$-region $f$ whose boundary edges are not crossed, 
	  we can obtained a new one 
	  by identifying the three vertices 
	  of $f$ with vertices $0,11$ and $12$ 
	  in $G_0$. 
Hence there are infinity many $3$-connected 
$1$-planar chordal graphs that are neither planar nor Hamiltonian.
	\iffalse 
	 examples as follows: $G$ is the graph with a 1-planar drawing as shown in  It is easy to check that $G$ is chordal, $\kappa(G)=3$  and  is non-planar since $G[\{0,1,2,11,12\}]\cong K_5$. Let $S=\{0,1,2,5,7\}$. Note that $G-S$ has six connected-components. So it is easy to see that $G$ is not Hamiltonian. Next, we can replace the face $[5675]$ by $G$, and the resulting  1-planar graph also has  connectivity $3$ and  chordal, non-Hamiltonian (we still choose $S$ and use same reason). The above process can be recursively 
	continued.\fi 
	\end{enumerate} 
\end{remark}

 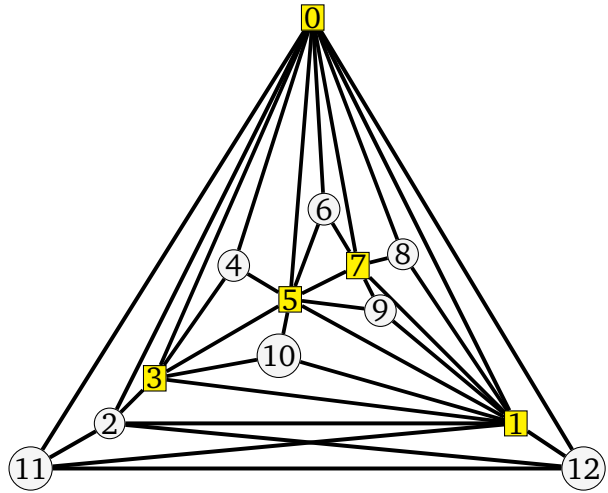
\begin{figure}[h!]
         \centering
\begin{tikzpicture}[scale=0.6]
  \tikzset{pureedge/.style = {line width=1.5pt, black}}
	\begin{pgfonlayer}{nodelayer}
		\node [style=square] (0) at (0, 10) {$0$};
		\node [style=whitenode] (1) at (-4.5, 1) {$2$};
		\node [style=square] (2) at (4.5, 1) {1};
		\node [style=square] (3) at (-3.5, 2) {3};
		\node [style=whitenode] (4) at (0.25, 5.75) {6};
		\node [style=whitenode] (5) at (2, 4.75) {8};
		\node [style=square] (6) at (1, 4.5) {7};
		\node [style=whitenode] (7) at (1.5, 3.5) {9};
		\node [style=whitenode] (8) at (-1.75, 4.5) {4};
		\node [style=square] (9) at (-0.5, 3.75) {5};
		\node [style=whitenode] (10) at (-0.75, 2.5) {10};
		\node [style=whitenode] (11) at (-6.25, 0) {11};
		\node [style=whitenode] (12) at (6, 0) {12};
	\end{pgfonlayer}
	\begin{pgfonlayer}{edgelayer}
		\draw [style=pureedge] (0) to (1);
		\draw [style=pureedge] (1) to (2);
		\draw [style=pureedge] (2) to (0);
		\draw [style={black_thick}] (0) to (4);
		\draw [style={black_thick}] (4) to (6);
		\draw [style={black_thick}] (6) to (5);
		\draw [style={black_thick}] (5) to (0);
		\draw [style={black_thick}] (0) to (6);
		\draw [style={black_thick}] (8) to (3);
		\draw [style={black_thick}] (3) to (1);
		\draw [style={black_thick}] (3) to (10);
		\draw [style={black_thick}] (10) to (2);
		\draw [style={black_thick}] (2) to (3);
		\draw [style={black_thick}] (8) to (9);
		\draw [style={black_thick}] (0) to (8);
		\draw [style={black_thick}] (0) to (3);
		\draw [style={black_thick}] (9) to (10);
		\draw [style={black_thick}] (3) to (9);
		\draw [style={black_thick}] (9) to (4);
		\draw [style={black_thick}] (0) to (9);
		\draw [style={black_thick}] (9) to (7);
		\draw [style={black_thick}] (2) to (7);
		\draw [style={black_thick}] (9) to (2);
		\draw [style={black_thick}] (6) to (7);
		\draw [style={black_thick}] (6) to (2);
		\draw [style={black_thick}] (2) to (5);
		\draw [style=pureedge] (0) to (11);
		\draw [style=pureedge] (11) to (1);
		\draw [style=pureedge] (12) to (11);
		\draw [style=pureedge] (11) to (2);
		\draw [style=pureedge] (1) to (12);
		\draw [style=pureedge] (12) to (2);
		\draw [style=pureedge] (12) to (0);
		\draw [style={black_thick}] (9) to (6);
	\end{pgfonlayer}
\end{tikzpicture}

      \caption{A chordal  and 1-planar 
      	graph $G_0$ 
      	that has connectivity $3$ and is not  Hamiltonian.}
      
        \label{NH_ch1_planargraphs}
    \end{figure}

\iffalse 
 The toughness of a graph is  closely associated with Hamiltonicity. 
 The \textit{toughness} of a graph $G$,
 denoted by $\tau(G)$, 
 is the minimum value of $\frac{|X|}{c(G-X)}$ over all  
 non-empty subsets $X$ of $V(G)$
 with $c(G-X)>1$, where  
 $c(H)$ is the number of components of a graph $H$.
  The toughness of a complete graph is defined as being infinite. 
  We say that a graph is \textit{$t$-tough} if its toughness is at least $t$. 
  There are some known results on 
  the Hamiltonicity of chordal graphs 
  in terms of their toughness. 
  For example, every 10-tough chordal graph is Hamiltonian \cite{Kabela},
  and  every %\red{more than 1-tough}
  chordal planar graph of order 
  at least three and toughness greater than one is Hamiltonian \cite{Bohme}.

  % Every \red{more than 1-tough} chordal planar graph \red{with at least three vertices of toughness greater than one} is Hamiltonian \cite{Bohme}.
   
   Note that a $t$-tough graph is always $\lceil 2t \rceil $-vertex-connected \cite{Chvatal}. So the corollary below follows directly from Theorem \ref{4_conn}.
   
   \begin{corollary}\label{corol1}
   	Every more than $\frac{3}{2}$-tough chordal $1$-planar graph  is Hamiltonian-connected.
   \end{corollary}
   
\fi

The remaining sections of this paper are organized as follows. In Section 2, we explain some terminology, notations and some preliminary lemmas, and in Section 3, we give some fundamental results on chordal graphs. In Section 4, we introduce a 4-join operation for generating 1-planar 4-trees. 
In Section 5, we characterize 
$4$-connected and $1$-planar chordal graphs. 
In Section 6, we prove Theorem~\ref{4_conn} by the result 
obtained in Section 5. 
Finally we leave some interesting open questions.

\section{Preliminaries}

%If the reader finds something unfamiliar, see a textbook of graph theory.
In this section, we introduce some notations and definitions.   
\iffalse 
Let $G=(V, E)$ be a connected graph. Let $v$ be a vertex of $G$. Let $N_G(v)$ denote the set of neighbors of $v$ in $G$, and the {\it degree} of
$v$, denoted by $d_G(v)$, is the number of edges in $G$ incident with $v$. $G^{\prime}=\left(V^{\prime}, E^{\prime}\right)$ is a \textit{subgraph} of $G$ if and only if $V^{\prime} \subseteq V$, and $E^{\prime} \subseteq E$.  A graph $G''=\left(V'', E''\right)$ is an \textit{induced subgraph} of $G$ if $V'' \subseteq V$ and all edges of $G$ having both ends in $V''$ form edge set $E''$.  We denote the subgraph induced by $V^{\prime} \subseteq V$ by $G\left[V^{\prime}\right]$.  For an edge $e$, denote $G\setminus e$ by the subgraph obtained by deleting $e$ from $G$. For a subset $S$ of $V$, the graph $G-X$ is the induced subgraph $G[V-S]$ of $G$. We denote by $G\cong H$ that graphs $G$ and $H$ are isomorphic.  
\fi

Let $G$ be a graph with  a drawing $D$.
For a  subgraph $H$ of graph $G$,
the subdrawing  of $D$ induced by $H$ is called a \textit{restricted drawing} of $D$, denoted by $D|H$. 
Two  drawings of a graph are \textit{isomorphic} if there is a  homeomorphism of the sphere that maps one drawing to the other. 
If $D$ is a 1-planar drawing, 
 an edge of $D$ is \textit{crossed} if it crosses another edge, and is \textit{uncrossed} otherwise.   
 A \textit{face} in a 1-planar drawing $D$ is given by a cyclic list of edges
and edge segments, where the latter occurs in the case of a crossing. The \textit{planar skeleton} $\mathcal{S}(D)$ of $D$ is the 
subgraph of $G$ by removing all crossed edges of $D$.  
For a face $f$ of $D$, 
denote $\partial( f )$ by  the  boundary of  $f$.  
We call $f$ is \textit{crossed} if $\partial( f)$ has a crossing, and is \textit{uncrossed} otherwise.
All ertices and crossings on $\partial(f)$  are called \textit{ corners}. A face $f$ is triangular if $\partial(f)$ has exactly three corners. 
A \textit{triangulated} 1-plane graph is one in which all faces are triangular.
%For any drawing $D$, let $\mathrm{cr}_D$ denote the number of crossings in $D$. The \textit{crossing number} of a graph $G$, denoted by $\mathrm{cr}(G)$, is the minimum value of $\mathrm{cr}_D$ over all
%drawings $D$ of $G$.

 For other terminology and notations not defined here, we refer to \cite{Bondy}, and the following are some auxiliary lemmas in this paper.

%\begin{lemma}[\cite{suzuki}] \label{kn1_planar}
%let $g$ be a graph with  at most six vertices. then $g$ is 1-planar. 
%\end{lemma}

Some fundamental results on $1$-planar graphs are provided below.

\begin{lemma}[\cite{Suzuki}]
	\label{K4n-8}
	For any  $1$-planar graph $G$ 
	of order $n\ge 3$, 
$|E(G)|\le 4n-8$, and moreover, $|E(G)|\le 4n-9$ if $n=7$ or $9$.
\end{lemma}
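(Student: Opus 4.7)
The plan is to combine Euler's formula with a double-counting of crossings, after reducing to an edge-maximal 1-plane drawing. Fix a good 1-plane drawing $D$ of $G$, and augment $G$ to an edge-maximal 1-plane drawing $\tilde D$ by repeatedly inserting any missing edge that can be drawn without creating a second crossing on any edge (since we only need an upper bound on $|E(G)|$, WLOG $G = \tilde G$). Let $c$ denote the number of crossings of $\tilde D$ and set $m=|E(G)|$. Form the \emph{planarization} $G^\times$ by replacing each crossing point with a new degree-$4$ vertex; then $G^\times$ is a simple plane graph with $n+c$ vertices and $m+2c$ edges.

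The first key step is to argue that, by the edge-maximality of $\tilde D$, every face of $G^\times$ is bounded by exactly three edge-segments: a face with four or more segments would admit a chord drawn inside it, contradicting maximality. Thus $3f = 2(m+2c)$, and Euler's formula $(n+c)-(m+2c)+f=2$ yields
\[
m \;=\; 3n + c - 6.
\]

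The second key step bounds $c$ in terms of $m$. For a crossing $x$ between edges $ac$ and $bd$, the surrounding $4$-cycle $abcd$ must be present in the planar skeleton $\mathcal{S}(\tilde D)$ — otherwise a missing side-edge could be added inside a face of $G^\times$ adjacent to $x$ without creating a new crossing, contradicting maximality. Each of the four cycle-edges $ab,bc,cd,da$ bounds exactly one of the four triangular faces of $G^\times$ incident to $x$. Since every edge of the planar skeleton borders exactly two faces of $G^\times$, each skeleton edge serves as a side of at most two crossings. Double-counting the (crossing, surrounding edge) incidences gives $4c \le 2|E(\mathcal{S}(\tilde D))| = 2(m-2c)$, hence $c \le m/4$. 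Plugging into $m = 3n+c-6$ yields $m \le 4n-8$.

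The main obstacle is the sharper bound $m \le 4n-9$ when $n \in \{7,9\}$. Equality $m = 4n-8$ forces $c = n-2$ and $4c = 2(m-2c)$, so every skeleton edge must serve as a surrounding edge of exactly two crossings; equivalently $\mathcal{S}(\tilde D)$ is a simple quadrangulation of the sphere whose $n-2$ faces are in bijection with the crossings. I would finish by showing that such a configuration cannot be realized on $n=7$ or $n=9$ vertices: a straightforward parity argument on the bipartite quadrangulation (every simple spherical quadrangulation is bipartite) together with the requirement that each face admits a pair of crossing diagonals whose endpoints are all distinct forces divisibility constraints that fail at $n=7,9$. This small-case analysis is the delicate part; the $4n-8$ bound itself is quite robust.
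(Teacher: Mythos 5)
This lemma is cited in the paper from the literature (Suzuki; Bodendiek--Schumacher--Wagner), so there is no in-paper proof to compare against; judged on its own, your argument has genuine gaps. First, edge-maximality of a 1-plane drawing does not make the planarization triangulated: a face of $G^\times$ with four or more segments need not admit a chord, because the natural chord may join two crossing points, or join two vertices that are already adjacent via an edge drawn elsewhere in the drawing (maximal 1-plane drawings are known to be far from triangulated; maximal 1-planar graphs can have only about $2.65n$ edges). This would be harmless for the inequality $m+2c\le 3(n+c)-6$, i.e.\ $m\le 3n+c-6$, which needs no maximality at all, but your second step really does need the structure you claim, and it does not follow: maximality only yields that each side edge $ab$, $bc$, $cd$, $da$ of a crossing is present \emph{somewhere} in $G$ (if $ab$ were absent it could be drawn inside the face between the segments $xa$ and $xb$); such an edge may be drawn far from $x$ and may itself be crossed, so it need not lie in the planar skeleton nor bound a triangular face at $x$. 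Hence the double count $4c\le 2\,|E(\mathcal{S}(\tilde D))|$, and with it $c\le m/4$, is unjustified, and your derivation of $4n-8$ collapses. The standard repair is much simpler: the plane bipartite graph formed by the $c$ crossing vertices and their incident segments is simple, so $4c\le 2(n+c)-4$, i.e.\ $c\le n-2$, which together with $m\le 3n+c-6$ gives $m\le 4n-8$ with no maximality argument.

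Second, the case $n\in\{7,9\}$, which is the actual content of the refinement, is not proved. Even granting the equality analysis that $|E(G)|=4n-8$ forces the uncrossed edges to form a simple quadrangulation of the sphere with both diagonals inserted into every face, your proposed parity/divisibility finish does not work: simple spherical quadrangulations on $7$ and on $9$ vertices exist (for instance $K_{2,5}$ and $K_{2,7}$), bipartiteness gives no divisibility obstruction at these orders, and every face of a simple quadrangulation automatically has four distinct corners, so that requirement adds nothing. The true obstruction is that on $7$ or $9$ vertices one cannot insert both diagonals into every face without creating parallel edges (distinct faces sharing a pair of opposite vertices, as in $K_{2,5}$ where the diagonal $xy$ would be repeated), and ruling this out for all quadrangulations of these orders is a genuine case analysis; this is precisely the nontrivial part of the cited result that optimal 1-planar graphs exist only for $n=8$ and $n\ge 10$. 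As written, your note establishes neither the bound $4n-8$ (because of the kite/skeleton gap) nor the refinement $4n-9$ for $n=7,9$.
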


\begin{lemma}[\cite{Suzuki}]\label{K6}
The complete graphs $K_5$ and $K_6$ have exactly one (up to isomorphism) 1-planar drawings as shown in Fig. \ref{fig:K6-drawings}, respectively.
\end{lemma}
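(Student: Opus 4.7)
The plan is to classify all $1$-planar drawings of $K_n$ for $n\in\{5,6\}$ via the \emph{planarization} $D^\times$ of a drawing $D$, obtained by replacing each crossing point with an auxiliary degree-$4$ vertex. This turns the $1$-planar classification into a uniqueness question for planar embeddings of a dense planar graph, which I intend to settle with Whitney's theorem on $3$-connected planar graphs.

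First, I would pin down the exact number $c$ of crossings. The planarization has $n+c$ vertices and $\binom{n}{2}+2c$ edges; since $D^\times$ is a simple planar graph, $|E|\le 3|V|-6$ yields $\binom{n}{2}+2c\le 3(n+c)-6$, i.e., $c\ge 1$ for $n=5$ and $c\ge 3$ for $n=6$. For the matching upper bound, I would show that $D^\times$ must be a triangulation: if some face $f$ of $D^\times$ had length $\ge 4$, then using the completeness of $K_n$ I would locate two original vertices $u,v$ at non-consecutive corners of $\partial f$; the drawing of the edge $uv\in E(K_n)$ then forces either an interior edge of $f$ (impossible, since $f$ is a face of $D^\times$) or a second crossing on an already-crossed edge (violating $1$-planarity). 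Thus $c=1$ for $K_5$ and $c=3$ for $K_6$, and in both cases $|E(D^\times)|=3|V(D^\times)|-6$, so $D^\times$ is a planar triangulation.

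Second, I would enumerate the placements of crossed-edge pairs. For $K_5$ with $c=1$, the single crossing is formed by two independent edges, and by vertex-transitivity this choice is unique up to relabeling. For $K_6$ with $c=3$, the three crossed pairs must each be independent, share no edge pairwise, and be compatible with a $1$-plane embedding; a short combinatorial check shows that, up to the action of $S_6$, they form the three ``long diagonals'' configuration of the standard octahedral $1$-plane drawing. Finally, since $K_n$ is $(n{-}1)$-connected and each crossing vertex of $D^\times$ sits inside a fan of four triangular faces, $D^\times$ is $3$-connected, so by Whitney's theorem its planar embedding is unique up to reflection. Suppressing the degree-$4$ crossing vertices then recovers $D$ up to a homeomorphism of the sphere, proving the uniqueness claim.

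The main obstacle is the upper bound on $c$: Euler's formula only gives the lower bound, and forcing $D^\times$ to be a triangulation requires genuinely exploiting that every pair of vertices of $K_n$ is joined by a drawn edge. The delicate part is a careful analysis of the corner sequence along a potential length-$\ge 4$ face of $D^\times$, which may interleave original vertices and crossing vertices in arbitrary patterns; it is precisely here that the smallness of $n\in\{5,6\}$ is essential, consistent with the fact that $K_n$ is not even $1$-planar for $n\ge 7$ by Lemma~\ref{K4n-8}.
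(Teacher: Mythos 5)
The paper does not prove this statement at all: it is quoted as a known result of Suzuki (and, for the related graph $K_6\setminus e$, of Korzhik), so you are attempting to reprove a nontrivial published classification. Your overall strategy (planarize, count crossings via Euler, then invoke Whitney's theorem for the $3$-connected planarization) is a sensible skeleton, but it has a genuine gap at its central step: the claim that the planarization $D^\times$ must be a triangulation. Euler's inequality only gives the lower bounds $c\ge 1$ for $K_5$ and $c\ge 3$ for $K_6$; it does not exclude, say, a $4$-crossing $1$-planar drawing of $K_6$ (with $c=4$ the planarization would have $10$ vertices, $23$ edges and $15$ faces, which is perfectly consistent with planarity). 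Your argument for excluding a face $f$ of length at least $4$ is a non sequitur: if $u$ and $v$ are non-consecutive original vertices on $\partial f$, the edge $uv$ of $K_n$ is indeed present in the drawing, but nothing forces its arc to be routed through $f$ or to touch $\partial f$; it can be drawn entirely in another region of the plane, so neither ``an interior edge of $f$'' nor ``a second crossing on an already-crossed edge'' is forced. Completeness of the graph constrains which edges exist, not where they are drawn, so this step collapses, and with it the determination of the exact crossing numbers $c=1$ and $c=3$ on which everything else rests. Ruling out the extra-crossing drawings is exactly the hard case analysis contained in the cited work.

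Two further steps are asserted rather than proved, and they are not merely routine bookkeeping. First, the ``short combinatorial check'' that the three crossing pairs of $K_6$ must, up to relabelling, be the three pairs of opposite edges of a $6$-cycle (the configuration of Fig.~\ref{fig:K6-drawings}(b)) is itself a substantial part of the classification: one must exclude other systems of three pairwise edge-disjoint independent pairs, using that the nine uncrossed edges together with the crossing segments embed as the planar skeleton. Second, $3$-connectivity of $D^\times$ needs an actual argument before Whitney's theorem can be applied (for $K_5$ it can be done by identifying $D^\times$ with the octahedron, but for $K_6$ you give no proof). The reduction ``unique abstract crossing configuration $+$ unique embedding of a $3$-connected planarization $\Rightarrow$ unique $1$-planar drawing'' is a nice way to finish, and your $K_5$ case can be completed along these lines once $c=1$ is established; but as written the proposal does not yield the lemma, and it is not comparable to the paper, which simply cites \cite{Suzuki} rather than proving the statement.
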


 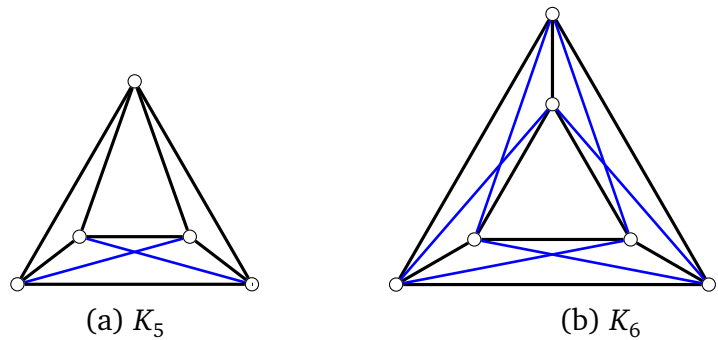
\begin{figure}[h!]
        \centering
        \begin{tikzpicture}[scale=0.6]
            \tikzset{vertex/.style = {circle, draw, fill=white, inner 
            sep=0pt, minimum width=5pt}}
            \tikzset{edge0/.style = {line width=1.2pt, black}}
            \tikzset{edge1/.style = {line width=1pt, blue, opacity=0.7}}
            
            \node [vertex] (v0) at (210:3) {};
            \node [vertex] (v1) at (330:3) {};
            %\node [vertex] (v2) at (90:3) {};
            \node [vertex] (w0) at (200:1.3) {};
            \node [vertex] (w1) at (340:1.3) {};
            \node [vertex] (w2) at (90:3) {};
            \draw [edge1] (v1) edge (w0) {};
            \draw [edge0] (v1) edge (w1) {};
            \draw [edge0] (v1) edge (w2) {};
            \draw [edge0] (v1) edge (v1) {};
            \draw [edge1] (v0) edge (w1) {};
            \draw [edge0] (v0) edge (v1) {};
            \draw [edge0] (v0) edge (w2) {};
            \draw [edge0] (w0) -- (w1) -- (w2) -- (w0){};
            \draw [edge0] (v0) -- (w0);
            
        \end{tikzpicture}
    \hspace{1.5 cm}
      \begin{tikzpicture}[scale=0.8]
                  \tikzset{vertex/.style = {circle, draw, fill=white, inner 
                  sep=0pt, minimum width=5pt}}
                  \tikzset{edge0/.style = {line width=1.2pt, black}}
                  \tikzset{edge1/.style = {line width=1pt, blue, opacity=0.7}}
                  
                  \node [vertex] (v0) at (210:3) {};
                  \node [vertex] (v1) at (330:3) {};
                  \node [vertex] (v2) at (90:3) {};
                  \node [vertex] (w0) at (210:1.5) {};
                  \node [vertex] (w1) at (330:1.5) {};
                  \node [vertex] (w2) at (90:1.5) {};
                  
                  \foreach \i in {0,1,2} {
                      \pgfmathtruncatemacro{\j}{Mod(\i+1,3)}
                      \foreach \v in {v,w} {
                          \draw [edge0] (\v\i) edge (\v\j) {};
                      };
                      \draw [edge0] (v\i) edge (w\i) {};
                      \draw [edge1] (v\i) edge (w\j) {};
                      \draw [edge1] (v\j) edge (w\i) {};
                  };
              \end{tikzpicture}
          
          (a) $K_5$ \hspace{5 cm} (b)
          $K_6$
          
        \caption{The unique 1-planar drawings of  $K_5$ and $K_6$.}
      
        \label{fig:K6-drawings}
    \end{figure}

%
%\begin{lemma}
%Let $G$ be a $4$-connected graph with a $1$-planar drawing  $D$.
%Let $C=[uxvu]$ be a 3-cycle in $D^*$  where $x$ is a crossing. Then there 
%are no vertices inside $D|C$, and $uv$ is a uncrossed edge.
%\end{lemma}

\begin{lemma}\label{3cutface}
Let $D$ be a 1-planar drawing of $G$,
and let $H$ be a subgraph of $G$. 
If some vertex of $G-H$ 
lies inside a crossed triangular
 face $f$ of $D|H$, 
then $\kappa(G)\le 3$.
\end{lemma}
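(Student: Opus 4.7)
The plan is to show that the boundary of a crossed triangular face is so rigid in a $1$-planar drawing that any vertex drawn inside it is separated from the rest of $G$ by at most three vertices.

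First I would pin down the structure of $\partial f$. If two consecutive corners on $\partial f$ were both crossings, then the single edge segment joining them would be part of an edge of $H$ carrying two crossings, contradicting $1$-planarity. Since $f$ has only three corners and at least one is a crossing, exactly one corner is a crossing $c$ and the other two are vertices $u_1, u_2 \in V(H)$. The crossing $c$ is formed by two edges $u_1 x,\, u_2 y \in E(H)$ meeting at $c$, and the side from $u_2$ back to $u_1$ must be a single edge $u_1 u_2 \in E(H)$, uncrossed in $D|H$ though possibly crossed in $D$ by one edge of $G - H$. Because $D$ is a good drawing, the crossing edges $u_1 x$ and $u_2 y$ share no endpoint, so $u_1, u_2, x, y$ are four distinct vertices of $H$.

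Next I would examine how vertices drawn inside $f$ can communicate with the rest of $G$. Let $I$ be the set of vertices of $G$ lying in the open interior of $f$ in $D$; by hypothesis $v \in I$. Any edge of $G$ from a vertex in $I$ to a vertex of $V(G) \setminus (I \cup \{u_1, u_2\})$ must cross $\partial f$. The edges $u_1 x$ and $u_2 y$ already carry the crossing $c$, so by $1$-planarity no further edge of $G$ can cross them. Hence every such escape edge must cross the side $u_1 u_2$, and again by $1$-planarity at most one edge of $G$ does so; let this edge, if present, be $pq$ with $p \in I$ and $q$ on the far side of $u_1 u_2$.

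To extract the separator, set $S = \{u_1, u_2\}$ if $u_1 u_2$ is uncrossed in $D$, and $S = \{u_1, u_2, q\}$ otherwise, so $|S| \le 3$. By the previous step $G - S$ has no edge joining $I$ to $V(G) \setminus (I \cup S)$. Because $x$ and $y$ are distinct vertices of $V(H) \setminus \{u_1, u_2\}$, $V(H) \cap I = \emptyset$, and $|S \setminus \{u_1, u_2\}| \le 1$, at least one of $x, y$ lies in $V(G) \setminus (I \cup S)$. Thus $G - S$ is disconnected and $\kappa(G) \le |S| \le 3$. The main obstacle I anticipate is the opening structural step: carefully arguing that a crossed triangular face must have exactly the shape $u_1, c, u_2$ with a single edge $u_1 u_2$ as its non-crossing side; everything afterward is a direct bookkeeping exercise from the fact that each edge carries at most one crossing.
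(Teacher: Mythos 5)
Your proposal is correct and follows essentially the same route as the paper's proof: identify the one crossing corner and the two vertex corners $u_1,u_2$ of the face, note that the two edges crossing there cannot be crossed again, and split into cases according to whether the side $u_1u_2$ is crossed in $D$, yielding the $2$-separator $\{u_1,u_2\}$ or the $3$-separator $\{u_1,u_2,q\}$. You simply supply more of the routine justifications (uniqueness of the crossing corner, distinctness of the four endpoints, nonemptiness of the outside component) that the paper leaves implicit.
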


\begin{proof}
Let $f=[abca]$. Assume that $a$ is the unique crossing in $\partial(f)$. Let $v$ be a vertex lying inside $f$. If $bc$ is uncrossed, then $\{b,c\}$ is a 2-separator of $G$. If  $bc$ is crossed by $xy$, 
where $x\notin \partial(f)$, then $\{b,c,x\}$ is 
a 3-separator of $G$.  In either case, $ \kappa(G)\le 3$.
\end{proof}

\newcommand \OPF[2]
{
\widehat{#1}_{#2}
%#1(#2 \Box) %\textlbrackdbl)
}

If $e$ is an uncrossed edge 
on $\partial(f)$ for some 
face $f$ in a $1$-planar drawing $D$, 
let $\OPF{f}{e}$ denote the other 
face in $D$ such that $e$ is also 
on  $\partial(\OPF{f}{e})$.

\begin{lemma}\label{insert-v}
Let $D$ be a $1$-planar drawing of a $1$-planar graph $G$. 
Assume that vertex $v\in V(G)$  lies within face $f$ of $D':=D|(G-v)$. 
Then, the following conclusions hold:
\begin {enumerate}
\item 
the sum of the number of uncrossed corners 
(i.e., vertices in $G-v$) on $\partial(f)$
and the number of  uncrossed edges 
$e$ on $\partial(f)$
for which $\partial(\OPF{f}{e})$ has at least three uncrossed corners 
is at least $d_G(v)$; and 
\item if $D'$ is triangulated and $d_D(v)=4$, 
then $f$ is uncrossed and 
$\OPF{f}{e}$ is also uncrossed for 
at least one edge $e$ on $\partial(f)$. 
\end{enumerate} 
\end{lemma}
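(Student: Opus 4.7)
The plan is to account for each edge $vu$ of $G$ by tracing it in $D$ and charging it injectively to either an uncrossed corner of $\partial(f)$ or a suitable uncrossed edge of $\partial(f)$; this charging will directly give Part 1, and a counting argument on triangular faces will give Part 2.

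For Part 1, since $v$ lies in $f$ of $D'$, each edge $vu$ begins inside $f$. By 1-planarity, $vu$ is crossed at most once, giving two cases: (a) $vu$ is uncrossed, so $u$ is a vertex on $\partial(f)$; or (b) $vu$ crosses exactly one edge $e=xy$, which must lie on $\partial(f)$ because that crossing is the only way $vu$ can leave $f$. In case (b), $e$ is crossed in $D$ only by $vu$, hence uncrossed in $D'$; the good-drawing rule forbids $vu$ and $e$ from sharing an endpoint, so $u\notin\{x,y\}$; and since $vu$ is not crossed again, $u$ lies on $\partial(\OPF{f}{e})$. Hence $x,y,u$ are three distinct uncrossed corners of $\partial(\OPF{f}{e})$, meaning $e$ qualifies for the second count.

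I would then define the charging that sends $vu$ to $u$ in case (a) and to $e$ in case (b), and verify injectivity: two distinct neighbours of $v$ map to distinct corners by simplicity of $G$; two $v$-edges cannot cross the same edge $e$ without violating 1-planarity; and corners and edges are different objects. This exhibits $d_G(v)$ distinct items accounted for by the two counts, establishing Part 1.

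For Part 2, I exploit that a triangulated $D'$ has every face with exactly $3$ corners. A crossed triangular face carries at most $2$ vertex corners, and the only way its boundary can contain an uncrossed edge is the side joining those two vertex corners (when there is exactly one crossing corner), giving at most $1$ uncrossed edge. So if $f$ were crossed, the Part 1 sum would be at most $2+1=3<4=d_G(v)$, a contradiction. Hence $f$ is uncrossed and already contributes its $3$ vertex corners; to reach $4$, Part 1 forces at least one uncrossed edge $e\in\partial(f)$ with $\partial(\OPF{f}{e})$ carrying $\ge 3$ uncrossed corners, and since triangularity caps that total at $3$, all three must be vertices, i.e., $\OPF{f}{e}$ is uncrossed. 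The main delicate step I anticipate is case (b) of the charging---confirming that $e$ truly lies on $\partial(f)$, is uncrossed in $D'$, and yields three distinct uncrossed corners on the opposite face via the good-drawing rule; once this is in place, Part 2 reduces to a routine count.
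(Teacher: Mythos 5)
Your proof is correct: the paper itself offers no argument for this lemma beyond asserting that it ``follows directly by the definition of $1$-planar drawings,'' and your charging scheme (uncrossed edge $vu$ charged to the corner $u$ of $\partial(f)$, crossed edge $vu$ charged to the uncrossed boundary edge $e$ it crosses, with $x,y,u$ giving three uncrossed corners of the face beyond $e$, and injectivity from simplicity and the once-crossed rule) supplies exactly the details the paper leaves implicit, and the triangular-face count for Part 2 is the intended consequence. The only corner case you do not mention --- when the crossed boundary edge $e$ has the face $f$ on both sides, so the ``other face'' is not genuinely different --- is harmless, since in that situation $u$ itself lies on $\partial(f)$ and can be charged as an uncrossed corner instead, so the inequality still holds.
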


\begin{proof}
The result follows directly by 
the definition of $1$-planar
drawings. 
\end{proof}

A \textit{clique} of a graph $G$ is a subset $S$ of $V(G)$ such that 
$G[S]$ is complete. 
A \textit{$k$-clique} is a clique with 
exactly $k$ vertices.

\begin{lemma}\label{K6conn}
	For a $1$-planar graph $G$ 
	of order $n\ge 7$, 
	if $G$ contains a $6$-clique, then $\kappa(G)\le 3$.
\end{lemma}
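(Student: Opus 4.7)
The plan is to fix a $1$-planar drawing $D$ of $G$ and exploit Lemma~\ref{K6}: the restriction $D|K$ of $D$ to any $K_6$-subgraph $K$ must be the unique nested-triangles drawing of Fig.~\ref{fig:K6-drawings}(b), with $v_0,v_1,v_2$ on the outer triangle and $w_0,w_1,w_2$ on the inner triangle. The faces of $D|K$ consist of two uncrossed triangular faces (the outer and the inner, bounded by the respective triangles) together with twelve crossed small triangular faces --- four inside each of the three quadrilateral regions $v_iv_jw_jw_i$ between the nested triangles, where the crossing of the pair $v_iw_j, v_jw_i$ splits the quadrilateral into four small triangles each of whose boundary contains a crossing. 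Since $n\ge 7$, some vertex of $V(G)\setminus V(K)$ lies in a face of $D|K$; if it lies in any of the twelve crossed triangular faces, Lemma~\ref{3cutface} applies directly and yields $\kappa(G)\le 3$.

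Otherwise every vertex of $V(G)\setminus V(K)$ lies in the outer or in the inner face of $D|K$. By the sphere-symmetry between these two triangular faces (the swap $v_i\leftrightarrow w_i$ is an automorphism of $D|K$), we may assume the set $B$ of such vertices lying in the inner face is nonempty, and aim to show that $\{w_0,w_1,w_2\}$ is a separator of $G$; the outer-face version of the same argument would give $\{v_0,v_1,v_2\}$ as the separator. The crucial step --- and the main technical obstacle --- is the claim that $N_G(u)\subseteq B\cup\{w_0,w_1,w_2\}$ for every $u\in B$. Suppose instead that some $u\in B$ has a neighbor $y\notin B\cup\{w_0,w_1,w_2\}$. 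The arc of $uy$ must exit the closed inner face, so by $1$-planarity it crosses exactly one edge, which must be one of the $w_iw_j$'s; having spent its entire crossing budget, the arc then lies inside one of the quadrilateral regions. That quadrilateral is itself bisected by the crossing pair $v_iw_j, v_jw_i$, so the arc enters precisely the small triangle whose boundary consists of $w_iw_j$ together with the segments of $v_iw_j$ and $v_jw_i$ incident to $w_j$ and $w_i$. A short case check on $y$ (whether $y$ is some $v_k$ or a vertex of the outer face) shows that any continuation of the arc from this small triangle to $y$ is forced to cross either one of the two already-once-crossed diagonal segments --- making some edge of $K$ be crossed twice --- or another uncrossed edge of $K$, in which case $uy$ itself would be doubly crossed. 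Both possibilities contradict $1$-planarity and establish the claim.

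With the claim in hand, removing $\{w_0,w_1,w_2\}$ isolates $B$ from the nonempty set $\{v_0,v_1,v_2\}$ in $G$, so $\{w_0,w_1,w_2\}$ is a $3$-separator of $G$ and $\kappa(G)\le 3$, as required. The rest of the proof is routine bookkeeping; all the work is in the face classification of $D|K$ and the small-triangle case analysis that rules out edges leaving~$B$.
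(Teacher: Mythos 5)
Your proof is correct and follows essentially the same route as the paper: restrict the drawing to the $6$-clique, invoke the uniqueness of the $1$-planar drawing of $K_6$, dispose of vertices in crossed triangular faces via Lemma~\ref{3cutface}, and otherwise take the three vertices of the uncrossed triangular face as a $3$-separator. The only difference is that you spell out (via the crossing-budget argument for an edge leaving the inner face) why those three vertices really do separate, a point the paper's proof asserts without detail.
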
 
\begin{proof}
Let $D$ be a 1-planar drawing of $G$
and let  $S$ be a $6$-clique of $G$.  
By Lemma \ref{K6}, $D|{G[S]}$ is unique, which is shown in Fig \ref{fig:K6-drawings} (b). 

Since $n\ge 7$, there is a vertex $u\in V(G)\setminus S$. 
Then $u$ lies in a triangular face $f$ in $D|{G[S]}$. 
If $f$ is crossed, the conclusion follows from Lemma \ref{3cutface}.  
If $f$ is uncrossed, 
the three vertices on $\partial (f)$ 
form a 3-separator of $G$. 
Hence $\kappa(G)\le 3$.
\end{proof}

%%%%%%%%% the following results are not applied.
\iffalse 
For a 1-plane graph $G$, 
a \textit{separating cycle} of $G$ is a cycle $C$ in $G$ such that the curve traced by $C$ in $G$ does not self-intersect and has at least one vertex strictly inside and strictly outside.

\begin{lemma}[\cite{Biedlpre}]
	\label{b1}
Let $G$ be a triangulated $1$-plane graph without uncrossed separating triangle, and
assume that $G$ contains at least $6$ vertices. Then $G$ contains a spanning planar subgraph  that is  triangulated and  has no separating triangle.
\end{lemma}

\begin{lemma}
	[\cite{Thomassen}]\label{t2}
Every 4-connected planar graph is Hamiltonian-connected.
\end{lemma}

\fi

\section{Some fundamental results on chordal graphs}

In this section, 
we introduce some basic results on chordal graphs.
By the definition of chordal graphs, 
the following result holds.
 
 \begin{lemma}[p51,\cite{Voloshin}]\label{hereditary}
 Let $G$ be  a chordal graph. Then every induced subgraph of $G$ is chordal.
 \end{lemma}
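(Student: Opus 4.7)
The plan is to argue directly from the definition, since chordality is preserved under induced subgraphs for essentially formal reasons. Recall that a chordal graph is one containing no hole, where a hole is an induced cycle of length at least four. So, given a chordal graph $G$ and an induced subgraph $H = G[V_1]$ with $V_1 \subseteq V(G)$, I would aim to show $H$ contains no hole.

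The key observation I would invoke is the transitivity of the induced-subgraph relation: if $V_2 \subseteq V_1 \subseteq V(G)$, then $H[V_2] = G[V_2]$, because in both cases the edges are exactly those edges of $G$ with both endpoints in $V_2$. In particular, any induced cycle of $H$ on vertex set $V_2$ is also an induced cycle of $G$ on the same vertex set.

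From here the argument is a one-line contradiction. Suppose for contradiction that $H$ contains a hole $C$, i.e., an induced cycle of $H$ of length at least four, with vertex set $V_2 \subseteq V_1$. Then by the preceding observation, $C = H[V_2] = G[V_2]$ is also an induced cycle of $G$ of length at least four, contradicting the assumption that $G$ is chordal. Hence $H$ has no hole, i.e., $H$ is chordal.

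There is essentially no obstacle in this proof; the only thing worth making explicit is the transitivity of the induced-subgraph operation, which is purely a matter of unwinding definitions. The lemma is really just the statement that the class of chordal graphs is hereditary, which is an immediate consequence of the forbidden-subgraph characterization by holes.
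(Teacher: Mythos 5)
Your argument is correct and is essentially the same as the paper's treatment: the paper offers no separate proof, simply noting that the statement follows from the definition of chordal graphs (and citing Voloshin), which is exactly the definitional unwinding you give — a hole in an induced subgraph $G[V_1]$ is, by transitivity of the induced-subgraph relation, a hole in $G$ itself.
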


A vertex $u$ in $G$ is said to be  \textit{simplicial} if 
either $d(u)=0$ or $N_G(u)$ is a clique.
An ordering $(u_1 u_2 \cdots u_n)$ of the vertices in $G$ is a 
called \textit{perfect 
	elimination order}, if  $u_i$ is a 
simplicial vertex of 
$G-\{u_j: 1\le j<i\}$ 
for all $i=1,2,\cdots,n-1$.  
Rose \cite{Rose1970} proved that a graph is chordal if and only if it has a perfect elimination ordering. 
Fix an integer $k \ge 1$, the class of \textit{ $k$-trees} is defined recursively as follows.  
The complete graph $K_k$ is the smallest $k$-tree, and 
a graph $G$ of order $n$,
where $n\ge k+1$, is a {\it $k$-tree}
if $G$ has a simplicial vertex $u$ 
of degree $k$ and 
$G-\{u\}$ is a $k$-tree. 
%A $k$-tree with $n$ vertices ( $n \ge k+1$ ) can be constructed from a $k$-tree with $n-1$ vertices by adding a vertex adjacent to all vertices of one of its $k$-cliques.  
Clearly, from the construction of $k$-trees, they naturally contain  perfect elimination orderings, and so $k$-trees form a subclass of chordal graphs.

\begin{lemma}[\cite{Dirac,Rose1970}] \label{cliques}
Let $G$ be a graph. The following statements are equivalent:
\begin{enumerate}
\item $G$ is chordal; 
\item every minimal separator 
of $G$ is a clique; and 
\item $G$ has a perfect elimination ordering.
\end{enumerate}
\end{lemma}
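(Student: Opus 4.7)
The plan is to establish the cycle of implications $(1) \Rightarrow (2) \Rightarrow (1)$ together with $(1) \Leftrightarrow (3)$, using only elementary arguments about induced cycles and minimal separators and invoking Lemma~\ref{hereditary} to pass to induced subgraphs. The main obstacle is direction $(1) \Rightarrow (3)$, which reduces to Dirac's classical fact that every non-complete chordal graph possesses two non-adjacent simplicial vertices; all the other implications reduce to short hole-building arguments.

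For $(1) \Rightarrow (2)$, I would fix any minimal $a$-$b$ separator $S$ and any two vertices $u, v \in S$. By minimality of $S$ as an $a$-$b$ separator, each of $u$ and $v$ has a neighbour in the component $A$ of $G - S$ containing $a$ and in the component $B$ containing $b$. I would then select a shortest $u$-$v$ path $P_A$ with all internal vertices in $A$, and similarly a shortest such $P_B$ through $B$. Assuming $uv \notin E(G)$, the concatenation $P_A \cup P_B$ is a cycle of length at least four. Shortness of $P_A$ and $P_B$ eliminates every chord with both endpoints on the same side (including chords at $u$ or $v$, which would produce a shorter path), while $A$ and $B$ lie in distinct components of $G - S$, so no chord can join them. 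The resulting induced cycle contradicts $(1)$, forcing $uv \in E(G)$ and hence $G[S]$ to be a clique.

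The converse $(2) \Rightarrow (1)$ is by contradiction: if $G$ contains a hole $C = v_1 v_2 \cdots v_k v_1$ with $k \ge 4$, I would take any minimal $v_1$-$v_3$ separator $S$. Then $v_2 \in S$ (since the path $v_1 v_2 v_3$ must meet $S$), and because $v_1 v_k v_{k-1} \cdots v_4 v_3$ is a second $v_1$-$v_3$ path, some $v_i$ with $i \in \{4, \dots, k\}$ also lies in $S$; by $(2)$, $v_2 v_i \in E(G)$, supplying a chord of $C$ and contradicting its inducedness. The implication $(3) \Rightarrow (1)$ uses an even simpler version of the same idea: given a perfect elimination ordering $(u_1,\dots,u_n)$, pick the vertex $u_i$ of a hypothetical hole $C$ with smallest index; its two neighbours on $C$ still belong to $G - \{u_1,\dots,u_{i-1}\}$, and must therefore be adjacent since $u_i$ is simplicial there, producing a chord of $C$.

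The remaining step $(1) \Rightarrow (3)$ proceeds by induction on $|V(G)|$, driven by the sub-lemma that every non-complete chordal graph contains two non-adjacent simplicial vertices. To prove the sub-lemma, pick non-adjacent vertices $a, b$ and a minimal $a$-$b$ separator $S$, which is a clique by $(1) \Rightarrow (2)$ proved above. Let $A$ (resp.\ $B$) denote the vertex set of the component of $G - S$ containing $a$ (resp.\ $b$), and consider $G_A := G[A \cup S]$, which is chordal by Lemma~\ref{hereditary}. If $G_A$ is complete, every vertex of $A$ is simplicial in $G$; otherwise induction yields two non-adjacent simplicial vertices of $G_A$, which cannot both lie in the clique $S$, so at least one sits in $A$ and is therefore simplicial in $G$ as well (its neighbourhood being contained in $A \cup S$). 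A symmetric argument on the $B$-side supplies a second simplicial vertex in $B$, necessarily non-adjacent to the first since they lie in different components of $G - S$. Peeling off one such simplicial vertex at a time --- each removal preserving chordality via Lemma~\ref{hereditary} --- yields the desired perfect elimination ordering and completes the proof.
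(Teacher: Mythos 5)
Your proof is correct, but there is nothing in the paper to compare it against: the paper does not prove Lemma~\ref{cliques} at all, it simply quotes it as a classical result of Dirac and Rose. What you have written is essentially the standard textbook argument --- the shortest-path-through-the-two-components construction for (1)$\Rightarrow$(2), the hole-meets-both-arcs argument for (2)$\Rightarrow$(1), the lowest-indexed-vertex-of-a-hole argument for (3)$\Rightarrow$(1), and Dirac's ``two non-adjacent simplicial vertices'' induction for (1)$\Rightarrow$(3) --- and each step checks out, including the use of Lemma~\ref{hereditary} to keep induced subgraphs chordal and the observation that two non-adjacent simplicial vertices of $G[A\cup S]$ cannot both lie in the clique $S$.

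One point worth making explicit, since the paper never defines ``minimal separator'': your argument reads it in the classical relative sense of a minimal $a$-$b$ separator (for some pair of non-adjacent vertices $a,b$), which is the sense of Dirac and Rose. This reading is forced: in your (2)$\Rightarrow$(1) step you apply the hypothesis to a minimal $v_1$-$v_3$ separator of the hole, and that separator need not be minimal among \emph{all} separators of $G$ under inclusion; indeed, with the inclusion-minimal reading the implication (2)$\Rightarrow$(1) is false (e.g.\ take an induced $4$-cycle $abcd$, add a vertex $u$ adjacent to $a,b,c,d$ and pendant-like vertices $p$ adjacent to $u,a$ and $q$ adjacent to $u,b$; every inclusion-minimal separator is one of the edges $\{u,a\}$, $\{u,b\}$, yet the graph is not chordal). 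Since you prove (1)$\Rightarrow$(2) for all minimal $a$-$b$ separators and invoke (2) only for such separators, your proof is internally consistent and establishes the lemma in its intended classical form; just state that convention at the outset.
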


\begin{lemma}\label{k6_e}
	Let $G$ be a 4-tree with $|V(G)|=6$. Then $G\cong K_6\setminus e$. Moreover, $G$  has exactly three non-isomorphic  
	$1$-planar drawings,  as 
	shown in Fig. \ref{K6_edrwings}.
\end{lemma}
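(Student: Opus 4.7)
The proof falls naturally into two parts: identifying $G$ as $K_6\setminus e$, and then classifying its $1$-planar drawings.

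For the structural identification, I would peel simplicial vertices in the spirit of the recursive definition of $k$-trees. Since $G$ is a $4$-tree on $6$ vertices, it has a simplicial vertex $u$ of degree $4$ whose removal leaves a $4$-tree on $5$ vertices. Re-applying the definition, $G-u$ is obtained from $K_4$ by attaching a universal vertex, so $G-u\cong K_5$. Writing $y$ for the unique non-neighbour of $u$ in $G$, we conclude $G\cong K_6\setminus uy$.

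For the drawings, fix a $1$-planar drawing $D$ of $G$ and let $x\in\{u,y\}$ (both have degree $4$). Then $G-x\cong K_5$, and by Lemma~\ref{K6} the restriction $D|(G-x)$ is isomorphic to the unique $1$-planar drawing of $K_5$ shown in Fig.~\ref{fig:K6-drawings}(a); I will use its labelling. A direct check shows $G$ is $4$-connected: any $3$-vertex deletion leaves three vertices pairwise joined except possibly by the missing edge $xy$, which still yields a connected subgraph, and combined with $\delta(G)=4$ this forces $\kappa(G)=4$. Lemma~\ref{3cutface} then excludes $x$ from every crossed face of $D|(G-x)$, so $x$ must lie in one of the four uncrossed triangular faces $v_0v_1w_2$, $w_0w_1w_2$, $v_0w_0w_2$, $v_1w_1w_2$.

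For each candidate face, $x$ sees only three corners but needs four neighbours, so exactly one edge at $x$ must leave the face through a single crossing. I would enumerate, for each face and each choice of $y\in V(G-x)$, all ways to route the four edges from $x$ so that $x$ is adjacent to everything except $y$ and no edge is crossed more than once. Many routings are immediately eliminated because the only edge available as a crossing partner is already crossed in $D|(G-x)$, or because the targeted vertex is not reachable from the adjacent face with just a single crossing allowed. The surviving routings are then collapsed using the automorphisms of the $K_5$ drawing (the reflection swapping $v_0\leftrightarrow v_1$ and $w_0\leftrightarrow w_1$, together with the involution swapping $v_i\leftrightarrow w_i$ for $i\in\{0,1\}$ while fixing $w_2$); I expect exactly three equivalence classes to survive, matching Fig.~\ref{K6_edrwings}.

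The main obstacle is precisely this bookkeeping step. The genuine difficulty is not in checking $1$-planarity of an individual configuration (each such check is routine, given the explicit face list above) but in organising the enumeration so that every valid placement is counted once and no valid placement is silently double-counted; tracking the symmetries of the $K_5$ drawing carefully is what makes the final count land on exactly three rather than on a slightly inflated number.
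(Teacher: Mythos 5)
The first half of your argument (peeling a simplicial vertex to get $G-u\cong K_5$, hence $G\cong K_6\setminus uy$) is exactly what the paper does. For the second half the paper does not enumerate anything: it simply invokes Korzhik's classification \cite{Korzhik} of the $1$-planar drawings of $K_6\setminus e$. Your plan — insert the second degree-$4$ vertex into the unique drawing of $K_5$ (Lemma~\ref{K6}), exclude crossed faces via Lemma~\ref{3cutface}, and enumerate routings up to symmetry — is a legitimate self-contained alternative in principle, but as written it is not a proof of the lemma: the content of the second assertion is precisely that the count is three, and you leave that as ``I expect exactly three equivalence classes to survive.'' The decisive step is deferred, not carried out.

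Two concrete issues would also have to be repaired before the enumeration could close. First, the claim that ``exactly one edge at $x$ must leave the face'' fails when the non-neighbour $y$ is a corner of the chosen uncrossed triangle: then two edges at $x$ must each cross a boundary edge, and such configurations do produce valid drawings (e.g.\ $x$ inside the face $[w_0w_1w_2]$ with $y=w_2$, routing $xv_0$ across $w_0w_2$ and $xv_1$ across $w_1w_2$), so they must be enumerated rather than pruned; your later sentence ``for each choice of $y$'' suggests you intend this, but the earlier sentence contradicts it. Second, collapsing only by automorphisms of the fixed $K_5$ subdrawing does not by itself yield ``exactly three'': an isomorphism between two completed drawings of $K_6\setminus e$ need not fix the inserted vertex $x$ — it may interchange the two degree-$4$ vertices — so placements inequivalent under the $K_5$-drawing symmetries can still be isomorphic drawings, and conversely you must separately verify that the three survivors in Fig.~\ref{K6_edrwings} are pairwise non-isomorphic ($A_1$ has two crossings while $A_2$ and $A_3$ have three, so distinguishing $A_2$ from $A_3$ needs a further invariant). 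Until this bookkeeping is actually done, the ``exactly three'' statement remains unestablished, which is the very point the paper settles by citation.
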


\begin{proof}
	By the definition of $4$-trees, any 4-tree with five vertices is $K_5$, and then  $G\cong K_6\setminus e$. 
	Due to  Korzhik \cite{Korzhik}, 
	$G$  has exactly three 1-planar drawings $A_1, A_2$ and $A_3$ shown in Fig. \ref{K6_edrwings}.
\end{proof}

 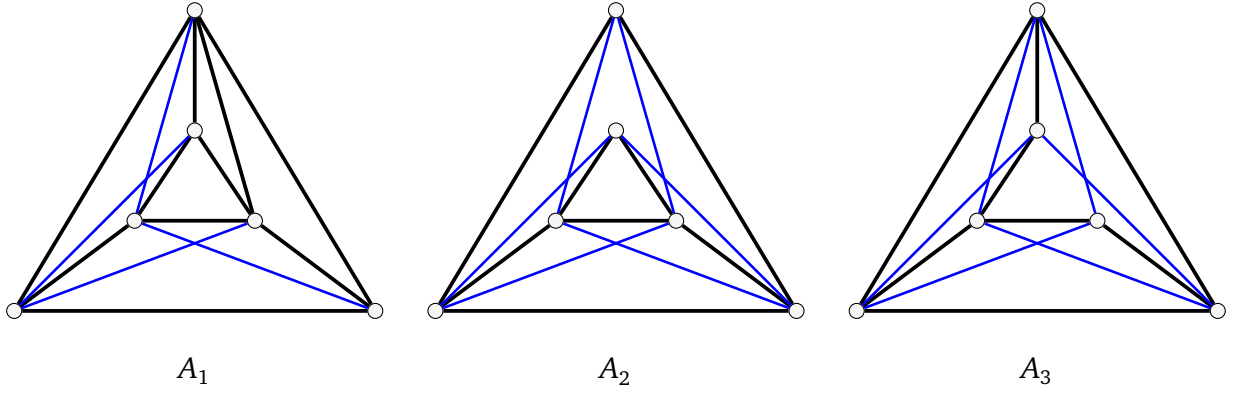
\begin{figure}[h!]
	\centering
	\begin{tikzpicture}[scale=0.8]
		\tikzset{blueedge_thick/.style = {line width=1pt, blue, opacity=0.7}}
\tikzset{whitenode/.style={fill={rgb,255: red,245; green,245; blue,245}, draw=black, shape=circle, minimum size=0.2cm, inner sep=0.1pt}}
		\node [style=whitenode] (0) at (-5, 6) {};
		\node [style=whitenode] (1) at (-8, 1) {};
		\node [style=whitenode] (2) at (-2, 1) {};
		\node [style=whitenode] (3) at (-4, 2.5) {};
		\node [style=whitenode] (4) at (-6, 2.5) {};
		\node [style=whitenode] (5) at (2, 6) {};
		\node [style=whitenode] (6) at (-1, 1) {};
		\node [style=whitenode] (7) at (5, 1) {};
		\node [style=whitenode] (8) at (3, 2.5) {};
		\node [style=whitenode] (9) at (1, 2.5) {};
		\node [style=whitenode] (10) at (9, 6) {};
		\node [style=whitenode] (11) at (6, 1) {};
		\node [style=whitenode] (12) at (12, 1) {};
		\node [style=whitenode] (13) at (10, 2.5) {};
		\node [style=whitenode] (14) at (8, 2.5) {};
		\node [style=whitenode] (15) at (-5, 4) {};
		\node [style=whitenode] (16) at (2, 4) {};
		\node [style=whitenode] (17) at (9, 4) {};
		\node [style=none] (18) at (-5, 0) {$A_1$};
		\node [style=none] (19) at (2, 0) {$A_2$};
		\node [style=none] (20) at (9, 0) {$A_3$};
		\draw [style={black_thick}] (0) to (1);
		\draw [style={black_thick}] (1) to (2);
		\draw [style={black_thick}] (2) to (0);
		\draw [style={black_thick}] (0) to (3);
		\draw [style={black_thick}] (3) to (2);
		\draw [style={black_thick}] (4) to (1);
		\draw [style={black_thick}] (4) to (3);
		\draw [style={black_thick}] (5) to (6);
		\draw [style={black_thick}] (6) to (7);
		\draw [style={black_thick}] (7) to (5);
		\draw [style={black_thick}] (8) to (7);
		\draw [style={black_thick}] (9) to (6);
		\draw [style={black_thick}] (9) to (8);
		\draw [style={black_thick}] (10) to (11);
		\draw [style={black_thick}] (11) to (12);
		\draw [style={black_thick}] (12) to (10);
		\draw [style={black_thick}] (13) to (12);
		\draw [style={black_thick}] (14) to (11);
		\draw [style={black_thick}] (14) to (13);
		\draw [style={black_thick}] (15) to (4);
		\draw [style={black_thick}] (15) to (3);
		\draw [style={black_thick}] (16) to (8);
		\draw [style={black_thick}] (17) to (14);
		\draw [style={black_thick}] (17) to (10);
		\draw [style={black_thick}] (0) to (15);
		\draw [style={black_thick}] (16) to (9);
		\draw [style={blueedge_thick}] (0) to (4);
		\draw [style={blueedge_thick}] (15) to (1);
		\draw [style={blueedge_thick}] (1) to (3);
		\draw [style={blueedge_thick}] (4) to (2);
		\draw [style={blueedge_thick}] (6) to (16);
		\draw [style={blueedge_thick}] (5) to (9);
		\draw [style={blueedge_thick}] (5) to (8);
		\draw [style={blueedge_thick}] (16) to (7);
		\draw [style={blueedge_thick}] (6) to (8);
		\draw [style={blueedge_thick}] (9) to (7);
		\draw [style={blueedge_thick}] (10) to (14);
		\draw [style={blueedge_thick}] (11) to (17);
		\draw [style={blueedge_thick}] (10) to (13);
		\draw [style={blueedge_thick}] (17) to (12);
		\draw [style={blueedge_thick}] (11) to (13);
		\draw [style={blueedge_thick}] (14) to (12);
	\end{tikzpicture}
	\caption{Three non-isomorphic 1-planar drawings of $K_6\setminus e$.}
	\label{K6_edrwings}
\end{figure}

\iffalse 
\begin{lemma}[\cite{Bohme}]
	\label{hereditary2}
Let $G$ be a $k$-connected chordal graph and $(u_1 u_2 \ldots u_n)$ be a 
perfect elimination order of $G$ and $i \in\{1, \ldots, n-1\}$. Then 
$G-\left\{u_1, \ldots, u_i\right\}$ is either $k$-connected or complete.
\end{lemma} 
\fi 

\iffalse 
\begin{lemma}[\cite{Rose1974}]
	\label{k_tree}
Let $G$ be a $k$-tree with $n$ vertices. Then
\begin{enumerate}
\item  $|E(G)| = kn-\frac{1}{2} k(k+1)$; 
and 
\item  if $n\ge k+1$, $\kappa(G)=k$.
\end{enumerate}
%In particular, if $G$ is a $4$-tree, $|E(G)| = 4n-10$.
\end{lemma}
\fi

\iffalse 
\begin{lemma}[\cite{Rose1974}]
	\label{ktrees_1}
 Let $G$ be a chordal graph with $n \geq k$ vertices. 
 If $G$ has  a $k$-clique but no $(k + 2)$-clique, then  $|E(G)| \leq kn-\frac{1}{2} k(k+1)$, 
 and if  $|E(G)| = kn-\frac{1}{2} k(k+1)$  then $G$ is a $k$-tree.
\end{lemma}
\fi

\begin{lemma}[\cite{Kabela19}]
	\label{prop1}
	Let $G$ be a $k$-connected chordal graph.  If $G$ has  no $(k + 2)$-clique, then $G$ is a $k$-tree.
\end{lemma}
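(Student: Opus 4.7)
The plan is to argue by induction on $n = |V(G)|$, producing a perfect elimination ordering in which every peeled vertex attaches along a $k$-clique of size exactly $k$.

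For the base case, observe that since $K_n$ has connectivity $n-1$ and $G$ is $k$-connected, $|V(G)| \ge k+1$. If $G$ is complete, then the hypothesis forbidding a $(k+2)$-clique forces $n = k+1$, and $K_{k+1}$ is a $k$-tree by definition. So assume $G$ is non-complete with $n \ge k+2$. By Lemma~\ref{cliques}(3), $G$ has a perfect elimination ordering; picking the first vertex yields a simplicial vertex $v$ of $G$.

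The first main step is to show $|N_G(v)| = k$. Since $N_G(v) \cup \{v\}$ is a clique, the no-$(k+2)$-clique hypothesis gives $|N_G(v)| \le k$. For the reverse inequality, since $G$ is non-complete and $N_G(v)\cup\{v\}$ is a clique, there exists $w \in V(G) \setminus (N_G(v) \cup \{v\})$, and then $N_G(v)$ separates $v$ from $w$; by $k$-connectivity, $|N_G(v)| \ge k$. Hence $N_G(v)$ is a $k$-clique.

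The second, and most delicate, step is to show $G - v$ is itself $k$-connected, so that induction applies (note $G - v$ is chordal by Lemma~\ref{hereditary} and still contains no $(k+2)$-clique). Suppose toward a contradiction that $S \subseteq V(G - v)$ is a separator of $G - v$ with $|S| \le k-1$, and let $C_1, \dots, C_t$ ($t \ge 2$) be the components of $(G - v) - S$. Because $N_G(v)$ is a clique in $G - v$ and distinct $C_i$'s have no edges between them, the vertices of $N_G(v) \setminus S$ all lie inside a single component, say $C_1$. Then in $G - S$ the vertex $v$ is adjacent only to vertices of $C_1 \cup S$, so $G - S$ remains disconnected (with $C_2, \dots, C_t$ cut off from $C_1 \cup \{v\}$). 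Hence $S$ is a separator of $G$ of size at most $k-1$, contradicting the $k$-connectivity of $G$. Therefore $G - v$ is $k$-connected; by the induction hypothesis, $G - v$ is a $k$-tree, and attaching $v$ to the $k$-clique $N_G(v)$ shows $G$ is a $k$-tree as well.

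The routine pieces (existence of a simplicial vertex, chordality of induced subgraphs, the base case) are immediate from the lemmas already recorded; the one step requiring real care is the $k$-connectivity of $G-v$, where the clique property of $N_G(v)$ is precisely what confines its non-separator vertices to a single component and lets one transfer a small separator of $G - v$ back to $G$.
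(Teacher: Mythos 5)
Your proof is correct. Note that the paper does not prove Lemma~\ref{prop1} at all — it is quoted from Kabela's paper — so there is no in-paper argument to compare against; your induction on $|V(G)|$ is the standard proof of this fact and is complete. In particular, you supply the one genuinely non-routine step, the $k$-connectivity of $G-v$: because $N_G(v)$ is a $k$-clique, any separator $S$ of $G-v$ with $|S|\le k-1$ leaves the nonempty clique $N_G(v)\setminus S$ inside a single component, so $S$ would already disconnect $G$, contradicting $\kappa(G)\ge k$; together with the two-sided bound giving $d_G(v)=k$ (at most $k$ from the no-$(k+2)$-clique hypothesis, at least $k$ since $N_G(v)$ separates $v$ from any vertex outside $N_G[v]$ when $G$ is not complete) and the observation that $G-v$ stays chordal (Lemma~\ref{hereditary}) with no $(k+2)$-clique, the induction closes correctly.
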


The following lemma must have appeared somewhere, although we cannot find it. We provide a simple 
proof. 

\begin{lemma}
	\label{k-connectedchordal}
Let $G$ be a chordal graph with $n$ vertices. If $G$ is $k$-connected, then $|E(G)|\ge kn-\frac{1}{2}k(k+1)$.
\end{lemma}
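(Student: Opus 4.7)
The plan is to argue by induction on $n=|V(G)|$, using the existence of a simplicial vertex in any chordal graph together with a sub-claim that peeling off a simplicial vertex preserves $k$-connectivity (unless the graph becomes complete). The target value $kn-\tfrac{1}{2}k(k+1)$ is exactly the edge count of a $k$-tree, so the inequality essentially says that $k$-trees minimize edges among $k$-connected chordal graphs.

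For the base case $n=k+1$, the hypothesis $\kappa(G)\ge k$ forces $G\cong K_{k+1}$ (otherwise $\kappa(G)\le n-2=k-1$), giving $|E(G)|=\binom{k+1}{2}=k(k+1)-\tfrac{1}{2}k(k+1)$, as required. For the inductive step with $n\ge k+2$, I invoke Lemma~\ref{cliques} to pick a simplicial vertex $u$ of $G$; since $d_G(u)\ge \delta(G)\ge\kappa(G)\ge k$, the neighborhood $N_G(u)$ is a clique of size at least $k$. The key sub-claim is that $G-u$ is either a complete graph or itself $k$-connected. To prove this, suppose $G-u$ is not complete and that $S\subseteq V(G-u)$ with $|S|<k$ separates $G-u$. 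Because $|N_G(u)|\ge k>|S|$, the set $N_G(u)\setminus S$ is nonempty, and since $N_G(u)$ is a clique in $G$, all of $N_G(u)\setminus S$ lies in a single connected component of $(G-u)-S$. Thus adding $u$ back merges $u$ into that one component only, so $S$ still separates $G$, contradicting $\kappa(G)\ge k$.

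With the sub-claim in hand, I split into two cases. If $G-u$ is $k$-connected, the inductive hypothesis gives $|E(G-u)|\ge k(n-1)-\tfrac{1}{2}k(k+1)$, and adding $d_G(u)\ge k$ yields the bound immediately. If instead $G-u$ is complete, then $|E(G-u)|=\binom{n-1}{2}$, and a short algebraic check reduces the desired inequality to $(n-1-k)(n-2-k)\ge 0$, which holds because $n\ge k+2$.

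The main obstacle is the sub-claim on removal of a simplicial vertex; everything else is either routine induction or a direct computation. The subtlety there is handling the case that $G-u$ becomes a small complete graph, where the nominal ``connectivity'' $|V(G-u)|-1$ could drop below $k$; the algebraic verification in the complete case is needed precisely to cover this situation (and is tight exactly when $n=k+2$, consistent with $k$-trees achieving equality). Apart from Lemma~\ref{cliques} and Lemma~\ref{hereditary} (to keep chordality under vertex deletion), no further machinery is required.
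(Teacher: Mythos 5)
Your proposal is correct and follows essentially the same route as the paper: induction on $n$, base case $K_{k+1}$, then delete a simplicial vertex $u$ (via Lemma~\ref{cliques}) of degree at least $k$ and add $d_G(u)\ge k$ to the inductive bound. The only difference is that you carefully justify the step the paper dismisses as obvious, namely that $G-u$ remains $k$-connected (or is complete, which you handle by a direct computation), so your write-up is a slightly more detailed version of the same argument.
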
 

\begin{proof}
We can prove this result by induction 
on $n$.
Since $G$ is $k$-connected, 
we have $n\ge k+1$.
When $n=k+1$, $G$ is the complete 
graph $K_{k+1}$, and the result holds.
Now assume that $n\ge k+2$. 
By Lemma~\ref{cliques}, 
$G$ contains a simplicial vertex $u$. 
Since $G$ is $k$-connected, 
we have $d(u)\ge k$. 
Obviously, $G-\{u\}$ is $k$-connected  and chordal.
By induction,  the result holds 
for $G-\{u\}$, and thus 
$$
|E(G)|=|E(G-\{u\})|+d(u)
\ge k(n-1)-\frac{1}{2}k(k+1)+k
=kn-\frac{1}{2}k(k+1).
$$
Hence the result holds.
\end{proof}

\iffalse 
\begin{lemma}\label{prop1}
Let $G$ be a  chordal graph.  If $G$ is $k$-connected, and has  no $(k + 2)$-clique, then $G$ is a $k$-tree.
\end{lemma}

\begin{proof}\red{
Let $n$ be the order of $G$. 
We prove this result by induction on $n$.
Since $G$ is $k$-connected, 
we have $n\ge k+1$. 
If $n=k+1$, then $G$ is $K_{k+1}$, 
a $k$-tree. 

Now assume that 
$n\ge k+2$.
By Lemma~\ref{cliques}, 
$G$ contains a simplicial vertex $u$. 
Since $G$ is $k$-connected
and  has no $(k+2)$-clique, 
we have $d(u)=k$. 
Obviously, $G-\{u\}$ is a
$k$-connected chordal graph 
without $(k+2)$-clique.
By induction, $G-\{u\}$ is a $k$-tree.
Thus, $G$ is also a $k$-tree 
by the definition of $k$-trees.
}
\iffalse 
Let $n$ be the order of $G$. 
Since $G$ is $k$-connected, 
we have $n\ge k$. 
If $n=k$, $G$ is $K_k$, 
a $k$-tree. Now assume that 
$n\ge k+1$.

By  Lemma \ref{k-connectedchordal}, $|E(G)|\ge kn-\frac{1}{2}k(k+1)$. 
Since $G$ is $k$-connected
and $n\ge k+1$,
$G$ contains a $k$-clique
by Lemma \ref{clique2}. 
Then  by Lemma \ref{ktrees_1}, we have $|E(G)|\le kn-\frac{1}{2}k(k+1)$.
Thus, $|E(G)|= kn-\frac{1}{2}k(k+1)$. 
Finally, by  Lemma \ref{ktrees_1}, the lemma holds.\fi 
\end{proof}
\fi

\section{4-join operation}

In this section, we introduce
a graph operation, 
the 4-join operation,
which will be applied to generate 1-planar 4-trees.

Let $D$ be a $1$-plane graph. 
If $f_1:=[abv_1a]$ and $f_2:=[abv_2a]$ 
are uncrossed triangular faces in $D$
such that $\partial (f_1)$ and 
$\partial (f_2)$  share exactly 
one edge (i.e., $ab$)
and $D[\{a,b,v_1,v_2\}]\cong K_4$, 
then $f_1$ and $f_2$ are called 
\textbf{twin faces} in $D$,
as shown in Fig.~\ref{4_join} (a).

 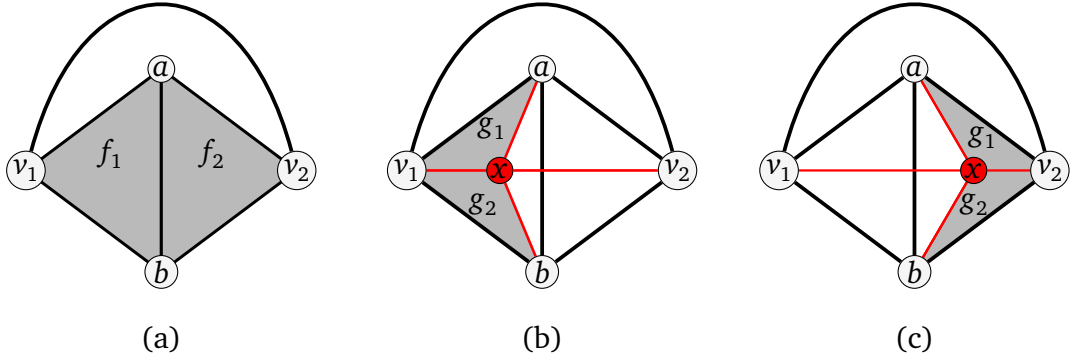
\begin{figure}[h!]
	\centering
	\begin{tikzpicture}[scale=0.45]
		\tikzset{rededge_thick/.style = {line width=1pt, red, opacity=0.7}}
		\tikzset{shadow/.style={fill={rgb,255: red,186; green,186; blue,186}, draw={black}, line width=0.4mm}}
		\tikzset{shadow_/.style={fill={rgb,255: red,186; green,186; blue,186}, draw={none}, line width=0.0mm}}
		\tikzstyle{black_thick}=[-, draw=black, line width=0.5mm, fill=none]
		\begin{pgfonlayer}{nodelayer}
			\node [style=whitenode] (0) at (-11.25, 3) {$a$};
			\node [style=whitenode] (1) at (-15.25, 0) {$v_1$};
			\node [style=whitenode] (2) at (-11.25, -3) {$b$};
			\node [style=whitenode] (3) at (-7.25, 0) {$v_2$};
			\node [style=none] (5) at (-12.75, 0.5) {$f_1$};
			\node [style=none] (6) at (-9.75, 0.5) {$f_2$};
			\node [style=whitenode] (7) at (0, 3) {$a$};
			\node [style=whitenode] (8) at (-4, 0) {$v_1$};
			\node [style=whitenode] (9) at (0, -3) {$b$};
			\node [style=whitenode] (10) at (4, 0) {$v_2$};
			\node [style=rednode] (11) at (-1.25, 0) {$x$};
			\node [style=none] (18) at (-1.5, 1.25) {$g_1$};
			\node [style=none] (19) at (-1.75, -1) {$g_2$};
			\node [style=none] (28) at (-11.25, -5) {(a)};
			\node [style=none] (29) at (0, -5) {(b)};
			\node [style=whitenode] (30) at (11, 3) {$a$};
			\node [style=whitenode] (31) at (7, 0) {$v_1$};
			\node [style=whitenode] (32) at (11, -3) {$b$};
			\node [style=whitenode] (33) at (15, 0) {$v_2$};
			\node [style=rednode] (36) at (12.75, 0) {$x$};
			\node [style=none] (37) at (11, -5) {(c)};
			\node [style=none] (38) at (13, 1) {$g_1$};
			\node [style=none] (39) at (12.75, -1) {$g_2$};
		\end{pgfonlayer}
		\begin{pgfonlayer}{edgelayer}
						\draw [style={black_thick}] (7) to (9);
			\draw [style={black_thick}, bend left=75, looseness=2.00] (8) to (10);
			\draw [style={black_thick}] (30) to (32);
			\draw [style=shadow] (2.center)
			to (0.center)
			to (1.center)
			to cycle;
			\draw [style=shadow] (2.center)
			to (0.center)
			to (3.center)
			to cycle;
			\draw [style={shadow_}] (11.center)
			to (7.center)
			to (8.center)
			to cycle;
			\draw [style={shadow_}] (8.center)
			to (11.center)
			to (9.center)
			to cycle;
			\draw [style={rededge_thick}] (7) to (11);
			\draw [style={rededge_thick}] (11) to (8);
			\draw [style={rededge_thick}] (11) to (9);
			\draw [style={rededge_thick}] (11) to (10);
			\draw [style={black_thick}, bend left=75, looseness=2.00] (31) to (33);
			\draw [style={black_thick}] (30) to (31);
			\draw [style={black_thick}] (31) to (32);
			\draw [style=rededge] (36) to (32);
			\draw [style={shadow_}] (32.center)
			to (33.center)
			to (30.center)
			to (36.center)
			to cycle;
			\draw [style=rededge] (30) to (36);
			\draw [style=rededge] (36) to (32);
			\draw [style=rededge] (32) to (33);
			\draw [style=rededge] (30) to (36);
			\draw [style=rededge] (36) to (32);
			\draw [style=rededge] (32) to (33);
			\draw [style={black_thick}] (33) to (30);
			\draw [style=rededge] (36) to (33);
			\draw [style=rededge] (36) to (31);

			\draw [style={black_thick}] (33) to (32);
			\draw [style={black_thick}, bend left=75, looseness=2.00] (1) to (3);
			\draw [style={black_thick}] (7) to (8);
			\draw [style={black_thick}] (8) to (9);

			\draw [style={black_thick}] (7) to (10);
			\draw [style={black_thick}] (10) to (9);
		\end{pgfonlayer}
	\end{tikzpicture}
	
	\caption{4-join operation on twin  faces $f_1$ and $f_2$}     
	\label{4_join}
\end{figure}

\begin{definition}
	[\textbf{4-join operation}]
	Let $f_1:=[abv_1a]$ and $f_2:=[abv_2a]$ be twin faces in 
	a plane graph $D$, as shown in 
	Fig.~\ref{4_join} (a). 
	A $4$-join operation
	at  the order pair $(f_1,f_2)$  is defined as follows:
	\begin{enumerate}
		\item put a new vertex $x$ within face $f_1$, and 
		\item  draw edges $xa$,$xb$, $xv_1$ inside $f_1$, and draw edge 
		$xv_{2}$ that crosses $ab$,
		as shown in Fig. \ref{4_join} (b).
	\end{enumerate}
	The  $1$-plane graph drawing obtained by the 4-join operation 
	at $( f_1,f_2)$
	is denoted by $D \oplus ( f_1,f_2)$, as shown at Fig.~\ref{4_join}  (b).\footnote{Note that 
	$D \oplus ( f_2,f_1)$, shown at Fig.~\ref{4_join} (c), 
	is different from 
	$D \oplus ( f_1,f_2)$,
		as 	$D \oplus ( f_2,f_1)$ is the one with the vertex $x$ within $f_2$.
	}
\end{definition}

\begin{lemma}\label{use4-j}
Let $D$ be a $1$-planar drawing of 
a $1$-planar graph $G$,
and let $v\in V(G)$ is a simplicial vertex in $G$ of degree $4$.
Assume that $D':=D|(G-v)$ is triangulated
and $v$ is within face $f$ of $D'$. 
If $e$ is the only edge on $\partial (f)$ 
such that $f':=\OPF{f}{e}$ is 
uncrossed, then 
$D=D'\oplus (f,f')$.
\end{lemma}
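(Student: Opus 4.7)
The plan is to read off the combinatorial structure of $D$ around $v$ and to match it against the $4$-join recipe.

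First, since $D'$ is triangulated and $d_D(v)=4$, Lemma~\ref{insert-v}(ii) gives that $f$ is uncrossed, hence a triangle with three vertex corners; write $\partial(f)=[abca]$. By hypothesis exactly one edge of $\partial(f)$ has uncrossed opposite face; relabeling, take this edge to be $ab$ with $f'=\OPF{f}{ab}=[abda]$ for some vertex $d$. Then $\OPF{f}{ac}$ and $\OPF{f}{bc}$ are crossed triangles, each with exactly two vertex corners, namely $\{a,c\}$ and $\{b,c\}$ respectively, the third corner being a crossing point.

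Next I identify $N_G(v)$, using that $v$ is simplicial of degree $4$, so $N_G(v)$ induces a $K_4$. Each edge at $v$ is a curve beginning inside $f$; by $1$-planarity it either stays inside $f$ (ending on $\partial(f)$) or crosses exactly one boundary edge of $\partial(f)$ (ending on the boundary of the opposite face). The good-drawing rule forbids any edge at $v$ from crossing $ac$ or $bc$: such an edge would have to end at a vertex of the corresponding crossed opposite face, but since that face has no vertex corner other than those of the crossed edge itself, the endpoint would share a vertex with the edge it crosses, which is not permitted. Hence the only boundary edge of $f$ that an edge at $v$ may cross is $ab$, and it can be crossed at most once. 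It follows that at least three edges at $v$ remain inside $f$ and terminate at the three distinct vertices of $\partial(f)$, while the fourth crosses $ab$ and terminates at the unique vertex of $\partial(f')\setminus\{a,b\}$, namely $d$. Thus $N_G(v)=\{a,b,c,d\}$, with $va,vb,vc$ drawn inside $f$ and $vd$ crossing $ab$.

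Finally, I verify that $(f,f')$ is an ordered pair of twin faces in $D'$ and that $v$ is inserted exactly as in the $4$-join. The faces $f=[abca]$ and $f'=[abda]$ are uncrossed triangular faces of $D'$ that share exactly the edge $ab$, and $\{a,b,c,d\}=N_G(v)$ induces $K_4$ in $G$, hence also in $G-v$, giving $D'[\{a,b,c,d\}]\cong K_4$. This is the definition of twin faces. The placement of $v$ and its four incident curves then matches the $4$-join construction verbatim; since inside a triangular region three non-crossing arcs from an interior point to the three corners are determined up to isotopy, we conclude $D=D'\oplus(f,f')$.

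The principal obstacle is the middle step, where the configurations with two or more edges of $v$ leaving $f$ must be excluded. This hinges on combining two observations: crossed opposite faces of $f$ contain too few vertex corners to receive extra endpoints of $v$'s edges, and an edge cannot cross another edge sharing one of its endpoints. Once these cases are ruled out, identifying the $4$-join structure is routine bookkeeping.
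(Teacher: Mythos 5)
Your proof is correct and follows essentially the same route as the paper: apply Lemma~\ref{insert-v}(ii) to get that $f$ is an uncrossed triangle, deduce $N_G(v)=\{a,b,c,d\}$ with $vd$ the unique edge crossing $ab$, and recognize the configuration as $D'\oplus(f,f')$. You simply spell out the step the paper leaves implicit, namely why no edge at $v$ can cross $ac$ or $bc$ (the crossed opposite faces have no vertex corners besides the endpoints of the crossed edge, and a good drawing forbids crossing an edge incident to one's own endpoint), which is a welcome addition of rigor rather than a different argument.
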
 

\begin{proof}
%	By Lemma~\ref{insert-v} (iii), 	$v$ lies in a uncrossed face 
Let $f=[bcab]$ and $f'=[bcdb]$, 
where edge $bc$ is on both 
$\partial(f)$ and $\partial(f')$. 
Since $v$ is within $f$ of $D'$, 
by Lemma~\ref{insert-v} (ii),
$N_D(v)=\{a,b,c,d\}$ is a clique of $D$. 
Thus, $D=D'\oplus (f,f')$.
\end{proof}

\section{Characterization of $4$-connected 
	$1$-planar chordal graphs}

Clearly, $K_5$ is the $4$-connected chordal 
graph with the smallest order,
and $K_6$ and $K_6-e$ 
are the only  
$4$-connected chordal 
graph of order  $6$.
Note that $\kappa(K_6)=5> 4$.
%In order to prove Theorem \ref{4_conn},we provide two results on $1$-planar chordal graphs. 
We first prove that $K_6$ is the only 
$1$-planar chordal graph 
with connectivity $5$, and 
every $1$-planar chordal graph $G$ with $\kappa(G)=4$ 
is a $4$-tree.

\begin{proposition}\label{main0}
  Let $G$ be a  $1$-planar chordal 
  graph that is not $K_6$. Then $\kappa(G)\le 4$. Moreover, if $\kappa(G)=4$, $G$ is a $4$-tree.
\end{proposition}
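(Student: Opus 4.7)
The plan is to derive both claims by combining the upper bound on edges for $1$-planar graphs (Lemma~\ref{K4n-8}) with the lower edge bound for $k$-connected chordal graphs (Lemma~\ref{k-connectedchordal}), and then invoking the structural result Lemma~\ref{prop1} for the $4$-tree conclusion. No fundamentally new geometric argument seems necessary at this stage; the work is really just a careful bookkeeping among the lemmas of Sections~2 and~3.

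First I would prove $\kappa(G)\le 4$ by contradiction. Set $n=|V(G)|$ and suppose $\kappa(G)\ge 5$, so $n\ge 6$. Applying Lemma~\ref{k-connectedchordal} with $k=5$ gives $|E(G)|\ge 5n-15$, while Lemma~\ref{K4n-8} gives $|E(G)|\le 4n-8$, with the strengthened bound $|E(G)|\le 4n-9$ when $n=7$. The inequality $5n-15\le 4n-8$ forces $n\le 7$, and the improved bound rules out $n=7$ (it would read $20\le 19$). Hence $n=6$; but any graph on $6$ vertices with connectivity at least $5$ must have minimum degree $\ge 5$, i.e.\ $G\cong K_6$, contradicting the hypothesis.

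For the moreover part, assume $\kappa(G)=4$. Lemma~\ref{prop1} with $k=4$ asserts that a $4$-connected chordal graph containing no $6$-clique is a $4$-tree, so it suffices to rule out $6$-cliques in $G$. Suppose for contradiction that $G$ contains a $6$-clique. If $n\ge 7$, Lemma~\ref{K6conn} immediately yields $\kappa(G)\le 3$, contradicting $\kappa(G)=4$. If $n=6$, the $6$-clique forces $G\cong K_6$, and then $\kappa(G)=5\ne 4$, again a contradiction. Hence $G$ has no $6$-clique, and Lemma~\ref{prop1} finishes the argument.

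The only subtlety I foresee is the boundary case $n=7$ in the first part, where the plain bound $|E(G)|\le 4n-8$ is too weak and one must invoke the sharpening to $4n-9$ from Lemma~\ref{K4n-8}; this is why the hypothesis ``$G\ne K_6$'' cannot be relaxed. Beyond that, every step is a direct application of a previously stated lemma, so I do not anticipate any real obstacle.
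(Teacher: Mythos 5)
Your proposal is correct and follows essentially the same route as the paper: the edge-count contradiction via Lemmas~\ref{K4n-8} and~\ref{k-connectedchordal} (with the $4n-9$ sharpening at $n=7$) for $\kappa(G)\le 4$, and Lemma~\ref{K6conn} plus Lemma~\ref{prop1} for the $4$-tree conclusion. The only difference is cosmetic: the paper treats the orders $5$ and $6$ separately in the second part (identifying $K_5$ and $K_6-e$ explicitly via Lemma~\ref{k6_e}), whereas you rule out $6$-cliques uniformly and invoke Lemma~\ref{prop1} directly, which is equally valid as that lemma is stated without an order restriction.
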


\begin{proof}
	Suppose $\kappa(G)\ge 5$. Clearly, $|V(G)|\ge 6$. 
	Since $G\ncong K_6$, we have 
	 $|V(G)|\ge 7$ . 
By Lemmas~\ref{K4n-8}
	and~\ref{k-connectedchordal}, we have 
	$$
	5|V(G)|-15\le |E(G)|\le 4|V(G)|-8, 
	$$
	 implying that $|V(G)|\le 7$. 
	 Thus, $|V(G)|= 7$. 
	 However, when $|V(G)|= 7$, 
	 applying Lemmas~\ref{K4n-8}
	 and~\ref{k-connectedchordal}
	 yields that 
	 $$
	 20=5\times 7-15 \le |E(G)| \le 
	 4\times 7 -9=19, 
	 $$
	 a contradiction. Hence 
	  $\kappa(G)\le 4$.
	
	Now assume that $\kappa(G)= 4$.
	 Then $|V(G)|\ge 5$. When $|V(G)|=5$,  $G$ is $K_5$
	 which is a $4$-tree.
	 If $|V(G)|=6$, 
	 by Lemma \ref{k-connectedchordal}, $|E(G)|\ge 4\times 6- \frac{4\times 5}{2}=14$. 
	 Since $|E(K_6)|=15$, we have $|E(G)|=14$. Hence, $G\cong K_6-e$.
	By Lemma \ref{k6_e}, $K_6-e$ is a 1-planar 4-tree. For $|V(G)|\ge 7$, since $\kappa(G)=4$, by Lemma \ref{K6conn}, $G$ has no 6-clique. Furthermore, by Lemma \ref{prop1}, $G$ is a $4$-tree.
\end{proof}

\iffalse 
\begin{proof}
Suppose  $\kappa(G)\ge 5$. Clearly, $|V(G)|\ge 6$. Moreover, $|V(G)|\ge 7$ since $G\ncong K_6$. By Lemma \ref{k-connectedchordal}, one has $|E(G)|\ge 5|V(G)|-15$.  By Lemma  \ref{K4n-8}, $|E(G)|\le 4|V(G)|-8$. Hence, $5|V(G)|-15\le 4|V(G)|-8$, which implies that $|V(G)|\le 7$. So $|V(G)|= 7$. If $|V(G)|=7$, by Lemma \ref{k-connectedchordal}, one has $|E(G)|\ge 5\times 7-\frac{1}{2}\times 5\times (5+1)= 20$. But by Lemma  \ref{K4n-8}, one has $|E(G)|\le 4\times 7 -9=19$, a contradiction. So $\kappa(G)\le 4$.

If $\kappa(G)= 4$, then $|V(G)|\ge 5$. If $|V(G)|=5$, then $G\cong K_5$. Clearly $K_5$ is a 4-tree and 1-planar. If $|V(G)|=6$,  then by Lemma \ref{k6_e}, $G\cong K_6\setminus e$ where $e\in E(K_6)$, and $K_6-e$ is a 1-planar 4-tree.  For $|V(G)|\ge 7$, since $\kappa(G)=4$, by Lemma \ref{K6conn},  $G$ has no 6-clique. Furthermore,  by Lemma \ref{prop1}, $G$ is a $4$-tree. 
\end{proof}
\fi 

%The \textit{join} $G+ H$ of two graphs $G$ and $H$ is obtained from vertex-disjoint copies of $G$ and $H$ by adding all edges joining every vertex in  $V (G)$ to every vertex in $V(H)$.

By Proposition  \ref{main0}, 
$K_6$ is the only 5-connected  1-planar chordal graphs, and all
1-planar chordal graphs with connectivity four are 4-trees.  
However, there exist 
4-trees which are not 1-planar. 
For example, 
 $K_4+\overline{K_3}$ is a 4-tree, 
 where $G+H$ 
 is the graph obtained from vertex disjoint copies of $G$ and 
 $H$
 by adding edges joining every vertex in  $G$ to every vertex in  $H$.
 But  $K_4+\overline{K_3}$ is not 1-planar (see \cite{Korzhik}).
  %where $\overline{H}$ denotes the complement of a graph $H$.}
  
 An \textit{Apollonian network} is a planar graph obtained by starting with a triangle, and repeatedly picking a triangular  face $f$ and subdividing $f$ into three triangles by inserting a new vertex in it.  
A planar graph is a 3-tree 
 if and only if it is a Apollonian network \cite{Biedl}. 
 In the following, we are going to
 establish an analogous result for 
 $4$-connected and 1-planar chordal graphs by the 4-join operation 
introduced in the previous 
section. 

By Proposition~\ref{main0}, every $1$-planar chordal graph $G$ with 
$\kappa(G)=4$ is a $4$-tree. 
Such graphs of orders at most $6$ 
are $K_5$ and $K_6-e$. 
In the following, we first characterize 
$1$-planar $4$-trees of order $7$, 
and then all $1$-planar $4$-trees of order more than $7$.

\begin{proposition}\label{4t-order7}
	The $1$-planar drawings of 
	all $1$-planar $4$-trees of order $7$ are shown in Fig.~\ref{4t-or7}.
\end{proposition}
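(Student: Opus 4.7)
The plan is to show that any $1$-planar drawing $D$ of a $1$-planar $4$-tree $G$ of order $7$ arises from a $1$-planar drawing of $K_6-e$ by a single $4$-join operation, and then to enumerate all such possibilities. By the recursive definition of $4$-trees, $G$ contains a simplicial vertex $v$ of degree $4$, and $G-v$ is a $4$-tree of order $6$. By Proposition~\ref{main0}, $G-v\cong K_6-e$, and by Lemma~\ref{k6_e} the restricted drawing $D':=D|(G-v)$ is isomorphic to one of the three drawings $A_1,A_2,A_3$ displayed in Fig.~\ref{K6_edrwings}.

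Next I would verify directly from Fig.~\ref{K6_edrwings} that each $A_i$ is a triangulated $1$-plane drawing, so that Lemma~\ref{use4-j} applies. Since $d_D(v)=4$ and $N_G(v)$ is a $4$-clique (as $v$ is simplicial), Lemma~\ref{insert-v}(ii) forces the face $f$ of $D'$ containing $v$ to be uncrossed and at least one face $\widehat{f}_e$ opposite to $f$ across an uncrossed edge $e\in\partial(f)$ to be uncrossed as well. Lemma~\ref{use4-j} then yields $D=D'\oplus (f,\widehat{f}_e)$, so $(f,\widehat{f}_e)$ is necessarily a pair of twin faces of $D'$.

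The problem therefore reduces to the following finite task: for each $A_i$, list every ordered pair of twin faces, apply the corresponding $4$-join operation, and classify the resulting order-$7$ drawings up to isomorphism. For each $A_i$ I would first enumerate the uncrossed triangular faces, then for each such face $f$ pick out the adjacent uncrossed triangular faces $\widehat{f}_e$ for which the union of their four corners induces a $K_4$; each admissible pair contributes one drawing of an order-$7$ $1$-planar $4$-tree. The outputs from all three $A_i$'s are then compared against the drawings shown in Fig.~\ref{4t-or7} to verify that the displayed list is complete and non-redundant.

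The main obstacle is the combinatorial bookkeeping: correctly identifying every twin-face pair in each of $A_1,A_2,A_3$, and then recognizing isomorphisms between the resulting order-$7$ drawings so as to match them against Fig.~\ref{4t-or7} without missing or double-counting. The automorphism group of each $A_i$ must be exploited to reduce the case analysis, and the crossings inherited from the underlying $A_i$ must be tracked through the $4$-join (which adds exactly one new crossing on the twin-face edge $ab$) so that the final isomorphism classification is unambiguous. This is a finite and entirely mechanical check, but it is the bulk of the work.
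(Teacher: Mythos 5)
There is a genuine gap, and it lies in your reduction of the problem to $4$-join operations. First, a smaller point: your claim that each $A_i$ is triangulated is false. Only $A_1$ is; $A_2$ and $A_3$ have three crossings each and contain a non-triangular crossed face (e.g.\ the face $[u_1c_1u_6c_2u_1]$ of $A_2$), so neither Lemma~\ref{insert-v}(ii) nor Lemma~\ref{use4-j} applies to them. The paper instead eliminates $A_2$ and $A_3$ by Lemma~\ref{insert-v}(i): in every admissible face of $A_2$ or $A_3$ the new vertex cannot be joined to a $4$-clique without violating $1$-planarity. This part of your plan is repairable, since $A_2$ and $A_3$ have no twin faces anyway.

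The serious error is your step ``Lemma~\ref{use4-j} then yields $D=D'\oplus(f,\widehat{f}_e)$.'' Lemma~\ref{use4-j} has the hypothesis that $e$ is the \emph{only} edge on $\partial(f)$ whose opposite face is uncrossed, and you drop it. In $A_1$ this hypothesis fails for the faces $f_2=[u_1u_5u_6]$ and $g_1=[u_1u_3u_5]$, each of which has two uncrossed neighbouring faces. In that situation the inserted simplicial vertex need not be attached by a $4$-join: with $v$ inside $f_2$ it may send one edge across $u_5u_6$ to $u_4$ and another across $u_1u_5$ to $u_3$, giving $N(v)=\{u_1,u_3,u_4,u_5\}$, a $4$-clique. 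This is exactly the drawing $B_3$ of Fig.~\ref{4t-or7}: the new vertex has \emph{two} crossed edges and the drawing has four crossings, whereas any $4$-join applied to $A_1$ (which has two crossings) produces three crossings, and $A_2,A_3$ admit no $4$-join at all. Moreover in $B_3$ the other simplicial vertex also has two crossed edges, so no choice of simplicial vertex exhibits $B_3$ as $D'\oplus(f,f')$. Consequently your enumeration of twin-face pairs of $A_1,A_2,A_3$ would output only $B_1$ and $B_2$ and miss $B_3$, so the claimed reduction is false and the classification would be incomplete. This is precisely why the paper argues order $7$ by a direct case analysis over the four uncrossed faces of $A_1$ using Lemma~\ref{insert-v}(ii) (allowing the non-$4$-join attachment that yields $B_3$), and only for orders at least $8$ does it reduce to $4$-joins, after establishing the extra property P2 of $D'_{uf}$ that restores the uniqueness hypothesis of Lemma~\ref{use4-j}.
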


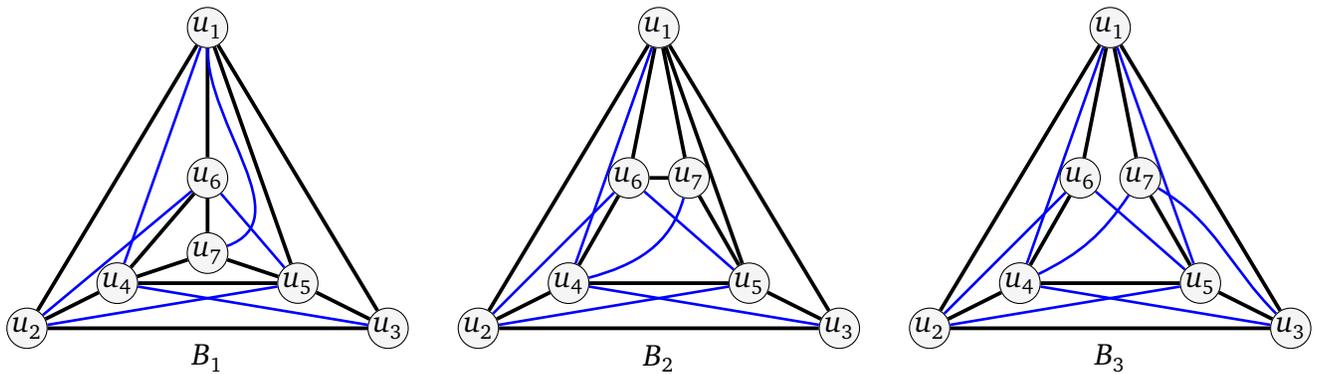
\begin{figure}[h!]
	\centering 

	\begin{tikzpicture}[scale=0.4]
		\tikzset{blueedge_thick/.style = {line width=1pt, blue, opacity=0.7}}
		\begin{pgfonlayer}{nodelayer}
			\node [style=none] (19) at (3, 0) {$B_2$};
			\node [style=none] (20) at (18, 0) {$B_3$};
			\node [style=whitenode] (24) at (-12, 11) {$u_1$};
			\node [style=whitenode] (25) at (-18, 1) {$u_2$};
			\node [style=whitenode] (26) at (-6, 1) {$u_3$};
			\node [style=whitenode] (27) at (-9, 2.5) {$u_5$};
			\node [style=whitenode] (28) at (-15, 2.5) {$u_4$};
			\node [style=whitenode] (29) at (-12, 6) {$u_6$};
			\node [style=none] (30) at (-12, 0) {$B_1$};
			\node [style=whitenode] (31) at (3, 11) {$u_1$};
			\node [style=whitenode] (32) at (-3, 1) {$u_2$};
			\node [style=whitenode] (33) at (9, 1) {$u_3$};
			\node [style=whitenode] (34) at (6, 2.5) {$u_5$};
			\node [style=whitenode] (35) at (0, 2.5) {$u_4$};
			\node [style=whitenode] (37) at (18, 11) {$u_1$};
			\node [style=whitenode] (38) at (12, 1) {$u_2$};
			\node [style=whitenode] (39) at (24, 1) {$u_3$};
		
\node [style=whitenode] (41) at (15, 2.5) {$u_4$};
\node [style=whitenode] (40) at (21, 2.5) {$u_5$};
			\node [style=whitenode] (36) at (2, 6) {$u_6$};
			\node [style=whitenode] (42) at (17, 6) {$u_6$};
			\node [style=whitenode] (43) at (-12, 3.5) {$u_7$};
			\node [style=whitenode] (45) at (4, 6) {$u_7$};
			\node [style=whitenode] (46) at (19, 6) {$u_7$};
		\end{pgfonlayer}
		\begin{pgfonlayer}{edgelayer}
			\draw [style={black_thick}] (24) to (25);
			\draw [style={black_thick}] (25) to (26);
			\draw [style={black_thick}] (26) to (24);
			\draw [style={black_thick}] (24) to (27);
			\draw [style={black_thick}] (27) to (26);
			\draw [style={black_thick}] (28) to (25);
			\draw [style={black_thick}] (28) to (27);
			\draw [style={black_thick}] (29) to (28);
			\draw [style={blueedge_thick}] (29) to (27);
			\draw [style={black_thick}] (24) to (29);
			\draw [style={blueedge_thick}] (24) to (28);
			\draw [style={blueedge_thick}] (29) to (25);
			\draw [style={blueedge_thick}] (25) to (27);
			\draw [style={blueedge_thick}] (28) to (26);
			\draw [style={black_thick}] (31) to (32);
			\draw [style={black_thick}] (32) to (33);
			\draw [style={black_thick}] (33) to (31);
			\draw [style={black_thick}] (31) to (34);
			\draw [style={black_thick}] (34) to (33);
			\draw [style={black_thick}] (35) to (32);
			\draw [style={black_thick}] (35) to (34);
			\draw [style={black_thick}] (36) to (35);
			\draw [style={blueedge_thick}] (36) to (34);
			\draw [style={black_thick}] (31) to (36);
			\draw [style={blueedge_thick}] (31) to (35);
			\draw [style={blueedge_thick}] (36) to (32);
			\draw [style={blueedge_thick}] (32) to (34);
			\draw [style={blueedge_thick}] (35) to (33);
			\draw [style={black_thick}] (37) to (38);
			\draw [style={black_thick}] (38) to (39);
			\draw [style={black_thick}] (39) to (37);
			\draw [style={blueedge_thick}] (37) to (40);
			\draw [style={black_thick}] (40) to (39);
			\draw [style={black_thick}] (41) to (38);
			\draw [style={black_thick}] (41) to (40);
			\draw [style={black_thick}] (42) to (41);
			\draw [style={blueedge_thick}] (42) to (40);
			\draw [style={black_thick}] (37) to (42);
			\draw [style={blueedge_thick}] (37) to (41);
			\draw [style={blueedge_thick}] (42) to (38);
			\draw [style={blueedge_thick}] (38) to (40);
			\draw [style={blueedge_thick}] (41) to (39);
			\draw [style={black_thick}] (29) to (43);
			\draw [style={black_thick}] (43) to (28);
			\draw [style={black_thick}] (43) to (27);
			\draw [style={black_thick}] (46) to (40);
			\draw [style={blueedge_thick}, bend left=15] (46) to (41);
			\draw [style={blueedge_thick}, in=135, out=-30] (46) to (39);
			\draw [style={black_thick}] (31) to (45);
			\draw [style={blueedge_thick}, in=15, out=-105] (45) to (35);
			\draw [style={black_thick}] (45) to (34);
			\draw [style={black_thick}] (36) to (45);
			\draw [style={black_thick}] (37) to (46);
			\draw [style={blueedge_thick}, in=20, out=-90] (24) to (43);
		\end{pgfonlayer}
	\end{tikzpicture}

	\caption{All $1$-planar drawings of 
		$1$-planar $4$-trees of order $7$}     
	\label{4t-or7}
\end{figure}

\begin{proof} Let $D$ be a $1$-planar drawing of a 
$1$-planar $4$-tree $T$ 
of order $7$, and let $v$ be 
	a simplicial vertex of $T$ with $d_T(v)=4$. 
	Then,  $N_T(v)$ is a clique in $T$ 
	of size $4$ and 
	$D|(T-v)$ must be a $1$-planar drawing
	of $T-v$. 
	Since $\kappa(T)=4$, 
	by Lemma~\ref{3cutface}, 
	$v$ cannot be within a crossed triangular face of $D|(T-v)$. 

\noindent {\bf Claim A}: 
	$D|(T-v)$ is isomorphic to $A_1$
	shown in 
	Fig.~\ref{K6_edrwings}
	or \ref{A2A3}

Note that $T-v\cong K_6-e$  
is a $1$-planar $4$-tree 
of order $6$. 
By Lemma~\ref{k6_e}, 
$D|(T-v)$ must be one of the 
three non-isomorphic 
$1$-planar drawings 
$A_1, A_2$ or $A_3$ 
shown in 
Fig.~\ref{K6_edrwings}
or \ref{A2A3}. 

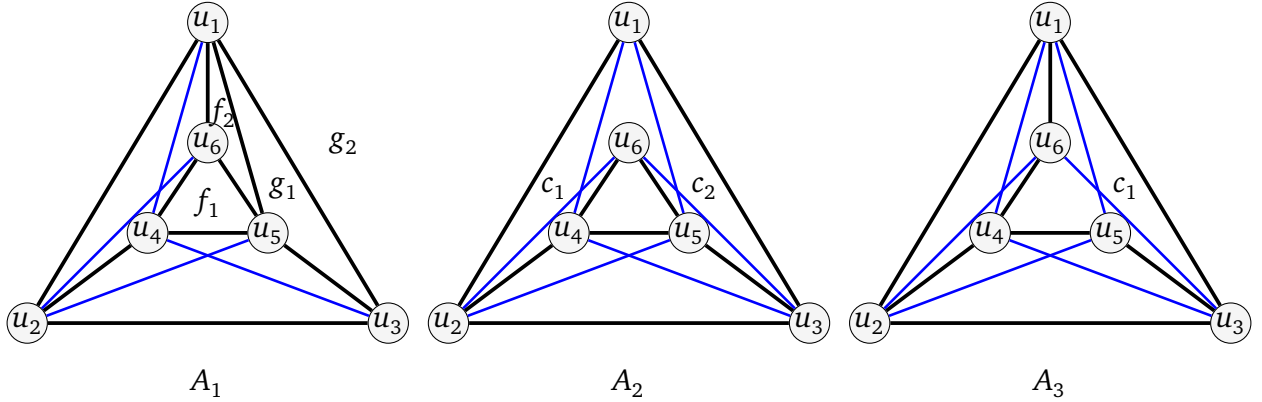
\begin{figure}[h!]
	\centering
	\begin{tikzpicture}[scale=0.8]
		\tikzset{blueedge_thick/.style = {line width=1pt, blue, opacity=0.7}}
		\node [style=whitenode] (5) at (0, 6) {$u_1$};
		\node [style=whitenode] (6) at (-3, 1) {$u_2$};
		\node [style=whitenode] (7) at (3, 1) {$u_3$};
		%\node [style=whitenode] (8) at (1, 2.5) {$u_6$};
		
		\node [style=whitenode] (8) at (1, 2.5) {$u_5$};
	\node [style=whitenode] (27) at (-6, 2.5) {$u_5$};
	\node [style=whitenode] (13) at (8, 2.5) {$u_5$};
	
	\node [style=whitenode] (14) at (6, 2.5) {$u_4$};
	\node [style=whitenode] (9) at (-1, 2.5) {$u_4$};
	\node [style=whitenode] (28) at (-8, 2.5) {$u_4$};
	\node [style=whitenode] (29) at (-7, 4) {$u_6$};
		\node [style=whitenode] (16) at (0, 4) {$u_6$};
		\node [style=whitenode] (17) at (7, 4) {$u_6$};
		\node [style=whitenode] (10) at (7, 6) {$u_1$};
		\node [style=whitenode] (11) at (4, 1) {$u_2$};
		\node [style=whitenode] (12) at (10, 1) {$u_3$};
		\node [style=none] (19) at (0, 0) {$A_2$};
		\node [style=none] (20) at (7, 0) {$A_3$};
		\node [style=none] (21) at (-1.25, 3.25) {$c_1$};
		\node [style=none] (22) at (1.25, 3.25) {$c_2$};
		\node [style=none] (23) at (8.25, 3.25) {$c_1$};
		\node [style=whitenode] (24) at (-7, 6) {$u_1$};
		\node [style=whitenode] (25) at (-10, 1) {$u_2$};
		\node [style=whitenode] (26) at (-4, 1) {$u_3$};
		\node [style=none] (30) at (-7, 0) {$A_1$};
		\node [style=none] (31) at (-7, 3) {$f_1$};
		\node [style=none] (32) at (-6.75, 4.5) {$f_2$};
		\node [style=none] (33) at (-5.75, 3.25) {$g_1$};
		\node [style=none] (34) at (-4.75, 4) {$g_2$};
		\draw [style={black_thick}] (5) to (6);
		\draw [style={black_thick}] (6) to (7);
		\draw [style={black_thick}] (7) to (5);
		\draw [style={black_thick}] (8) to (7);
		\draw [style={black_thick}] (9) to (6);
		\draw [style={black_thick}] (9) to (8);
		\draw [style={black_thick}] (10) to (11);
		\draw [style={black_thick}] (11) to (12);
		\draw [style={black_thick}] (12) to (10);
		\draw [style={black_thick}] (13) to (12);
		\draw [style={black_thick}] (14) to (11);
		\draw [style={black_thick}] (14) to (13);
		\draw [style={black_thick}] (16) to (8);
		\draw [style={black_thick}] (17) to (14);
		\draw [style={black_thick}] (17) to (10);
		\draw [style={black_thick}] (16) to (9);
		\draw [style={blueedge_thick}] (6) to (16);
		\draw [style={blueedge_thick}] (5) to (9);
		\draw [style={blueedge_thick}] (5) to (8);
		\draw [style={blueedge_thick}] (16) to (7);
		\draw [style={blueedge_thick}] (6) to (8);
		\draw [style={blueedge_thick}] (9) to (7);
		\draw [style={blueedge_thick}] (10) to (14);
		\draw [style={blueedge_thick}] (11) to (17);
		\draw [style={blueedge_thick}] (10) to (13);
		\draw [style={blueedge_thick}] (17) to (12);
		\draw [style={blueedge_thick}] (11) to (13);
		\draw [style={blueedge_thick}] (14) to (12);
		\draw [style={black_thick}] (24) to (25);
		\draw [style={black_thick}] (25) to (26);
		\draw [style={black_thick}] (26) to (24);
		\draw [style={black_thick}] (24) to (27);
		\draw [style={black_thick}] (27) to (26);
		\draw [style={black_thick}] (28) to (25);
		\draw [style={black_thick}] (28) to (27);
		\draw [style={black_thick}] (29) to (28);
		\draw [style={black_thick}] (29) to (27);
		\draw [style={black_thick}] (24) to (29);
		\draw [style={blueedge_thick}] (24) to (28);
		\draw [style={blueedge_thick}] (29) to (25);
		\draw [style={blueedge_thick}] (25) to (27);
		\draw [style={blueedge_thick}] (28) to (26);
	\end{tikzpicture}
	
	\caption{$A_1$, $A_2$ and $A_3$}     
	\label{A2A3}
\end{figure}

We shall show that $D|(T-v)$ must be the $1$-planar drawing $A_1$.
If $D|(T-v)$ is $A_2$, as shown in 
Fig.~\ref{A2A3}, 
then $v$ must be within a uncrossed 
triangular face of $A_2$ or 
a face of $A_2$ which is not triangular,
implying that  
$v$ must be within 
the crossed face 
$[u_1c_1u_6c_2u_1]$,
or within one of the uncrossed faces 
$[u_1u_2u_3u_1],  [u_4u_5u_6u_4]$,
where $c_1$ and $c_2$ are crossings of edges, as shown in 
$A_2$ of Fig.~\ref{A2A3}.
However,  by Lemma \ref{insert-v} (i), 
in any one of three situations,
connecting $v$ to any one clique in 
$D|(T-v)$ of size $4$ will 
violate the 1-planarity of $T$, 
a contradiction. 

Similarly,  $D|(T-v)$ cannot be $A_3$.
Hence  Claim A holds.
	
	Now we are going to show that $D$ is isomorphic to one of the drawings shown in Fig.~\ref{4t-or7}. 
	By Claim A, $D$ is obtained 
	by inserting $v$ to a face of 
	$A_1$ and then joining $v$ 
	to all vertices in a $4$-clique of
	$A_1$.

	Note that $A_1$ is triangular.
	By Lemma~\ref{insert-v} (i), 
	$v$ cannot  be  within any crossed triangular face of $A_1$. 
	Thus, $v$ must be in a triangular 
	face of $A_1$, i.e., $f_1, f_2, g_1$ 
	or $g_2$ (see Fig. ~\ref{A2A3}). 
	
	\noindent {\bf Case 1}: $v$ is within 
	face $f_1=[u_4,u_5,u_6,u_4]$. 
	
	In this case, 
	by Lemma~\ref{insert-v} (ii), 
	$N_T(v)$
	must be the set $\{u_4,u_5,u_6,u_1\}$,
	and $D$ is isomorphic to $B_1$
	shown in Fig.~\ref{4t-or7}.  
	
	\noindent {\bf Case 2}: $v$ is within 
	face $f_2=[u_1,u_5,u_6,u_1]$. 
	
	In this case, 
	by Lemma~\ref{insert-v} (ii), 
		$N_T(v)$
	must be either $\{u_1, u_4,u_5,u_6,u_1\}$
	or $\{u_1, u_3,u_4, u_5,u_1\}$, 
	and $D$ is isomorphic to either 
	$B_2$ or $B_3$
	shown in Fig.~\ref{4t-or7}.  
	
	\noindent {\bf Case 3}: $v$ is within face $g_1=[u_1,u_3,u_5]$.
	
	In this case, 	
	by Lemma~\ref{insert-v} (ii), 
	$N_T(v)$
	must be either $\{u_1, u_3,u_5,u_2\}$
	or $\{u_1, u_3,u_5,u_4\}$, 
	and thus $D$ is isomorphic to either 
	$B_2$ or $B_3$
	shown in Fig.~\ref{4t-or7}.  
	
	\noindent {\bf Case 4}: $v$ is within 
	face $g_2=[u_1,u_2,u_3]$. 
	
	In this case,  	
	by Lemma~\ref{insert-v} (ii), 
	$N_T(v)$
	must be the set $\{u_1,u_2,u_3,u_5\}$,
	and $D$ is thus isomorphic to $B_1$
	shown in Fig.~\ref{4t-or7}.  
	
	Hence the result holds.
\end{proof}

	Let $\Phi$ denote the set of 
	$1$-planar drawings of 
	some $1$-planar graphs defined below: 
	\begin{enumerate}
		\item 
		the $1$-planar drawings
		$B_1$ and $B_2$ in Fig.~\ref{4t-or7}
		are the ones in $\Phi$ 
		with the smallest order; and 
		\item 
		if $D\in \Phi$, then both 
		$D\oplus (f_1,f_2)$
		and $D\oplus (f_2,f_1)$ 
		belong to 
		 $\Phi$ 
		for twin faces $f_1$ and $f_2$ in $D$.
	\end{enumerate}

\begin{lemma}\label{set-phi}
Every $1$-planar drawing $D$ 
in $\Phi$ represents
a $1$-planar $4$-tree 
with exactly two simplicial vertices. 
\end{lemma}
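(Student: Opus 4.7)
The plan is to argue by strong induction on $n := |V(D)|$, using the recursive definition of $\Phi$. For the base case $n = 7$, Proposition~\ref{4t-order7} already ensures that $B_1$ and $B_2$ are $1$-planar $4$-trees, and a direct inspection of Fig.~\ref{4t-or7} identifies exactly two simplicial vertices in each: for $B_1$ these are $u_3$, with $N(u_3) = \{u_1, u_2, u_4, u_5\}$ forming a $K_4$, and $u_7$, with $N(u_7) = \{u_1, u_4, u_5, u_6\}$ forming a $K_4$; every other vertex has a pair of non-adjacent neighbors (for instance $u_2$ and $u_7$ are both adjacent to $u_6$ but are non-adjacent to each other), so is not simplicial. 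The drawing $B_2$ is treated analogously.

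For the inductive step, let $n \geq 8$ and write $D = D_0 \oplus (f_1,f_2)$ for some $D_0 \in \Phi$ of order $n-1$ and twin faces $f_1 = [abv_1a]$, $f_2 = [abv_2a]$ in $D_0$, with new vertex $x$. By the inductive hypothesis, $D_0$ is a $1$-planar $4$-tree with exactly two simplicial vertices $s_1, s_2$. That $D$ is itself a $1$-planar $4$-tree follows directly from the $4$-join construction: the only new crossing pairs the new edge $xv_2$ with the previously uncrossed edge $ab$, while $xa, xb, xv_1$ are drawn inside $f_1$ without additional crossings; and since $N_D(x) = \{a,b,v_1,v_2\}$ is a $4$-clique in $D_0$ by the definition of twin faces, $x$ is a simplicial vertex of degree $4$, so $D$ is a $4$-tree by the recursive definition of $4$-trees.

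To count simplicial vertices in $D$, note that $x$ is simplicial; any $z \in V(D_0) \setminus \{a,b,v_1,v_2\}$ has $N_D(z) = N_{D_0}(z)$ and hence is simplicial in $D$ iff it was in $D_0$; and any $y \in \{a,b,v_1,v_2\}$ fails to be simplicial in $D$, because $d_{D_0}(y) \geq 4$ (as $D_0$ is $4$-connected by Proposition~\ref{main0}) forces some neighbor $w$ of $y$ to lie outside $\{a,b,v_1,v_2\}$, and such $w$ is non-adjacent to $x$ (whose neighborhood is $\{a,b,v_1,v_2\}$), so $N_D(y)$ is not a clique. Consequently the simplicial vertices of $D$ are exactly the set $\{x\} \cup \bigl(\{s_1,s_2\} \setminus \{a,b,v_1,v_2\}\bigr)$, and the lemma reduces to the combinatorial identity $|\{s_1,s_2\} \cap \{a,b,v_1,v_2\}| = 1$.

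The main obstacle is proving this identity for every twin face pair in every $D_0 \in \Phi$. My approach is to establish, as an auxiliary structural claim, that in any $1$-planar drawing of a $4$-tree with exactly two simplicial vertices (a so-called $4$-path), the $4$-clique of corners of every twin face pair is contained in exactly one maximal $5$-clique $C$ of $D_0$, and that $C$ must be one of the two endpoint $5$-cliques $N(s_i) \cup \{s_i\}$ of the clique-path of $D_0$; in particular the simplicial vertex $s_i$ generating that endpoint $5$-clique lies among the four corners. This structural claim would be proved by a separate induction on $|V(D_0)|$, tracking how twin face pairs are created or destroyed at each $4$-join, and the key subtlety is to verify that a \emph{separator} $4$-clique in the clique tree never supports two uncrossed triangular faces sharing an uncrossed edge in the $1$-planar drawing.
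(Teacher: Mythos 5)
Your reduction is sound and, up to that point, more detailed than the paper's own argument: the new vertex $x$ is simplicial in $D$, vertices outside $\{a,b,v_1,v_2\}$ keep their neighborhoods and hence their simplicial status, and each corner $y\in\{a,b,v_1,v_2\}$ loses simpliciality because it has a neighbor $w$ outside the corner set (minimum degree $4$ in a $4$-tree of order at least $5$ suffices here; citing $4$-connectedness via Proposition~\ref{main0} is not quite the right reference, but the fact is true) while $xw\notin E(D)$. So indeed $S(D)=\{x\}\cup\bigl(S(D_0)\setminus\{a,b,v_1,v_2\}\bigr)$ and everything hinges on $|\{s_1,s_2\}\cap\{a,b,v_1,v_2\}|=1$. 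Note that the ``at most one'' half is free: the corners form a clique while two simplicial vertices of a $4$-tree are never adjacent (Lemma~\ref{Chord}), so the real content is that \emph{at least one} simplicial vertex lies among the four corners of every twin-face pair of every drawing in $\Phi$.

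That is exactly where your proposal stops being a proof. The ``auxiliary structural claim'' (every twin-face pair's corner $4$-clique sits inside a unique maximal $5$-clique, which must be an endpoint clique $N[s_i]$) is precisely equivalent to the needed invariant, and you only announce that it ``would be proved by a separate induction,'' flagging yourself the key subtlety (that a separator $4$-clique never carries two uncrossed triangular faces sharing an uncrossed edge). Nothing in the proposal verifies this, nor do you track how the set of twin-face pairs changes under a $4$-join (new pairs are created around $x$, old ones are destroyed or survive), which is the inductive bookkeeping the claim requires. This is the same invariant the paper maintains — it asserts that in $B_1,B_2$ each of the two twin-face pairs hosts one simplicial vertex and that this configuration propagates under $4$-joins, with the supporting case analysis carried out through the $D_{uf}$ configurations $D_1,\dots,D_6$ in Proposition~\ref{main-4t} — so your route is not genuinely different; it is the paper's argument with its essential step left as an unproven promise. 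As written, the proof is incomplete: you must either carry out the induction establishing the twin-face/simplicial-vertex invariant for all members of $\Phi$, or import an analysis of the uncrossed-face structure (as in Fig.~\ref{Duf}) that yields it.
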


\begin{proof}
We have  the following two facts:
\begin{enumerate}
	\item Both $B_1$ and $B_2$,
	shown in Fig.~\ref{4t-or7},  
	represent $1$-planar $4$-trees
	with exactly two somplicial vertices
	and two pairs of twin faces, 
	and each 
	simplicial vertex is 
	located in one pair of twin faces;
	\item if $D=D'\oplus (f_1,f_2)$ 
	for a pair of twin faces $f_1$ and $f_2$ of $D'$
	and some simplicial vertex of $D$ 
	is on $\partial(f_1)\cup \partial(f_2)$, 
	then $D$ and $D'$ have the equal number 
	of simplicial vertices. 
\end{enumerate}
By the definition of $\Phi$, the result holds.
\end{proof}

In the following, we will show that 
the $1$-planar drawings of 
all $1$-planar $4$-trees
of order at least $8$ 
belongs to $\Phi$.

For any $1$-planar drawing $D$
with at least one uncrossed face, 
let $E_{uf}(D)$ be the set of edges $e$ in $D$ such that $e$  is on $\partial(f)$ 
for some uncrossed face $f$ of $D$,
and let $V_{uf}$ be the set of vertices in $D$ which 
are incident with some edges in 
$E_{uf}(D)$.
Let $D_{uf}$ denote 
the $1$-planar drawing
$D|H$, where 
$H=(V_{uf},E_{uf}(D))$.

 \iffalse 
obtained from $D$ by removing all 
edges in $E(D)\setminus E_{uf}(D)$,
and then  removing all 
isolated vertices in the $1$-planar drawing obtained by removing 
all edges in $E(D)\setminus E_{uf}(D)$.
If $D$ is a plane graph, then 
$D_{uf}$ is $D$ itself. 
\fi

\begin{proposition}\label{main-4t}
Any $1$-planar drawing of 
a $1$-planar $4$-tree of order at least $8$ is isomorphic to 
some $1$-planar 
drawing in $\Phi$.
\end{proposition}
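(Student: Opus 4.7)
I would prove the proposition by strong induction on $n := |V(G)| \ge 8$, where $D$ is the given $1$-planar drawing of the $1$-planar $4$-tree $G$. Let $v$ be a simplicial vertex of $G$ of degree $4$, which exists by the definition of $4$-trees, and set $D' := D|(G-v)$; this is a $1$-planar drawing of the $1$-planar $4$-tree $G-v$ of order $n - 1 \ge 7$. The induction hypothesis (when $n \ge 9$) together with Proposition~\ref{4t-order7} (when $n = 8$) gives that $D'$ is isomorphic to a drawing in $\Phi \cup \{B_3\}$.

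As a preliminary step, I would verify that every drawing in $\Phi$ is triangulated. The base drawings $B_1$ and $B_2$ are triangulated by inspection of Fig.~\ref{4t-or7}, and this property is preserved by the $4$-join operation: after inserting the new vertex $x$ inside $f_1$ and joining it to $a, b, v_1, v_2$ as in Fig.~\ref{4_join}(b), the twin faces $f_1, f_2$ are replaced by the two uncrossed triangular faces $[a v_1 x]$, $[b v_1 x]$ together with the four crossed triangular faces created by the crossing of $xv_2$ with $ab$. The drawing $B_3$ is also triangulated by inspection, so in all subcases $D'$ is triangulated.

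I then apply Lemma~\ref{insert-v}(ii) to $D'$ and the degree-$4$ vertex $v$: the face $f$ of $D'$ containing $v$ is an uncrossed triangular face, say $f = [abc]$, and at least one edge $e \in \partial(f)$ satisfies that $\widehat{f}_e$ is uncrossed. Writing $N_G(v) = \{a,b,c,d\}$, which is a $4$-clique because $v$ is simplicial, the edge $vd$ of $D$ must cross some $e \in \partial(f)$; the $1$-planarity of $D$ then forces $d$ to be the third vertex of the uncrossed adjacent face $\widehat{f}_e$. Now the $4$-clique edges $ad, bd, cd$, together with the uncrossedness of $\partial(f) \cup \partial(\widehat{f}_e)$, force $D'[\{a,b,c,d\}]$ to realise the planar $K_4$ configuration of Fig.~\ref{4_join}(a), so $f$ and $\widehat{f}_e$ are twin faces of $D'$. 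Lemma~\ref{use4-j} now yields $D = D' \oplus (f, \widehat{f}_e)$, and when $D' \in \Phi$ we conclude $D \in \Phi$ directly from the recursive definition of $\Phi$.

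The remaining case $D' = B_3$, which can only occur in the base $n = 8$, is the main obstacle. I would settle it by a direct enumeration in Fig.~\ref{4t-or7}, verifying that $B_3$ admits no twin face pair: for each pair of edge-adjacent uncrossed triangular faces in $B_3$, the four vertices on their boundaries fail to span a $K_4$ in $G-v$ (equivalently, the putative fourth edge between the two ``outer'' vertices either does not exist or would have to cross the shared uncrossed boundary edge). This contradicts the twin-face structure derived in the preceding paragraph, so $D' = B_3$ cannot arise for any simplicial $v$, leaving $D' \in \{B_1, B_2\} \subset \Phi$ and hence $D \in \Phi$.
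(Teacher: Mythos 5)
Your inductive skeleton is the same as the paper's (peel off a simplicial vertex $v$ of degree $4$, note that $D':=D|(G-v)$ is a triangulated drawing of a smaller $1$-planar $4$-tree, and argue that re-inserting $v$ is a $4$-join at a twin pair of $D'$), but there is a genuine gap at the decisive step. You write $N_G(v)=\{a,b,c,d\}$ with $f=[abc]$ the face of $D'$ containing $v$, i.e.\ you assume that three of $v$'s neighbours are exactly the three vertices of $f$ and only one neighbour lies outside. Lemma~\ref{insert-v}(ii) does not give this. Since $f$ may have two (not one) incident uncrossed neighbouring faces --- this actually happens for the ``middle'' faces of the configurations $D_1,D_2,D_5,D_6$ in Fig.~\ref{Duf} --- the vertex $v$ could a priori have only two neighbours on $\partial(f)$ and two neighbours that are the apexes of two different uncrossed faces adjacent to $f$; the clique condition on $N_G(v)$ does not preclude this, because the required extra edges could be drawn (crossed) elsewhere. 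This is not a hypothetical configuration: it is exactly how $B_3$ arises from the drawing $A_1$ of $K_6-e$ in the paper's own Proposition~\ref{4t-order7} (Case 2, the option $N_T(v)=\{u_1,u_3,u_4,u_5\}$ with $v$ inside $[u_1u_5u_6]$). If such an insertion occurred into a drawing of $\Phi$, it would not be a $4$-join, and your argument would not place $D$ in $\Phi$. The paper excludes it by carrying a much stronger induction invariant: the subdrawing $D_{uf}$ of uncrossed-face boundaries is always one of $D_1,\dots,D_6$ with properties P1 and, crucially, P2 (specific non-edges between the apexes), and P2 is exactly what kills the two-outside-neighbour case. Your induction hypothesis ``$D'\in\Phi$'' is too weak to yield these non-edges, and your appeal to Lemma~\ref{use4-j} also overlooks its hypothesis that $e$ be the \emph{only} edge of $\partial(f)$ with uncrossed opposite face, which fails precisely for those middle faces. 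The same gap undermines your treatment of $D'=B_3$: showing $B_3$ has no twin pair only excludes insertions with one outside neighbour, whereas the paper shows via Lemma~\ref{insert-v}(i) that no degree-$4$ simplicial vertex can be inserted into any face of $B_3$ at all.

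Several ingredients of your proposal are sound and worth keeping: that every drawing in $\Phi$ is triangulated (the $4$-join preserves triangulation); that any neighbour of $v$ outside $\partial(f)$ must be the third vertex of an \emph{uncrossed} adjacent face, since a crossed triangular face has only the two endpoints of the shared edge on its boundary; and that once exactly one neighbour lies outside, the clique condition supplies the edge making $f$ and $\widehat{f}_e$ twin faces, so any such insertion is a $4$-join at some twin pair and hence stays in $\Phi$ (rendering the uniqueness hypothesis of Lemma~\ref{use4-j} unnecessary in that case). But to close the argument you must either re-introduce an invariant equivalent to the paper's classification of $D_{uf}$ with P1/P2, or give an independent proof that in every drawing of $\Phi$ the apexes of two uncrossed faces adjacent to a common uncrossed face are non-adjacent; as it stands, the step ``$N_G(v)$ consists of the three vertices of $f$ and one outside vertex'' assumes what has to be proved.
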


\begin{proof}
Let $T$ be a $1$-planar $4$-tree of order $n$ ($\ge 8$),
and let $v$ be a simplicial vertex of $T$. 
Thus,  $N_T(v)$ is a $4$-clique in $T$. 
Let $D$ be a $1$-planar drawing of $T$.
Then, 
$D|(T-v)$ must be a $1$-planar drawing
of $T-v$. 
Since $\kappa(T)=4$, 
by Lemma~\ref{3cutface}, 
$v$ cannot be within a crossed triangular face of $D|(T-v)$. 

In the following, we shall show that 
$D_{uf}$ is isomorphic to some $D_j$
in the set $\{D_j: 1\le j\le 6\}$,  shown in 
Fig.~\ref{Duf} with the following 
properties:
\begin{enumerate}
	\item [P1.] in each $D_j$
	($1\le j\le 6$), 
$f_i$ and $f_{i+1}$ are 
twin faces\footnote{The set of vertices in 
	$\partial(f_i)\cup \partial(f_{i+1})$,
	where $i\in \{1,3\}$, 
	forms a $4$-clique of $D$,
	although it does not in $D_{uf}$.
} of $D$ for each $i\in \{1,3\}$,
where $f_1, f_2, f_3$ and $f_4$ are faces in $D_j$, 
and 
\item [P2.]
$v_0v_2$, $v_1v_2$ and $v_1v_3$
are not edges in $D$ 
if $D_{uf}$  is $D_1$ or $D_2$.\footnote{
In each $D_i$, $1\le i\le 2$,
Property 2
 implies that  
 there are two nonadjacent 
 vertices in $D_i$ of degree $3$
 each of which is not adjacent to some 
vertex in $D_i$ 
of degree $2$ in $D$.
P2 also implies that if $v$ is within face $f_2$, then $v_2\notin N_D(v)$.
}
\end{enumerate}
\begin{figure}[h!]
	\centering 
\begin{tikzpicture}[scale=0.5]
\tikzset{whitenode/.style={fill={rgb,255: red,245; green,245; blue,245}, draw=black, shape=circle, minimum size=0.1cm, inner sep=0.1pt,scale=0.7}}
\tikzset{whitenode1/.style={fill={rgb,255: red,245; green,245; blue,245}, draw=black, shape=circle, minimum size=0.1cm, inner sep=0.1pt,scale=2.5}}
\tikzset{shadow_/.style={fill={rgb,255: red,206; green,226; blue,248}, draw={none}, line width=0.4mm}}
\tikzset{shadow_pure/.style={fill={rgb,255: red,224; green,182; blue,250}, draw={none}, line width=0.4mm}}
	\begin{pgfonlayer}{nodelayer}
		\node [style=whitenode] (0) at (-4.25, 2.5) {$v_0$};
		\node [style=whitenode] (1) at (-2, 5) {$v_1$};
		\node [style=whitenode1] (2) at (-2, 0) {};
		\node [style=whitenode1] (3) at (1, 5) {};
		\node [style=whitenode] (4) at (3, 2.5) {$v_2$};
		\node [style=whitenode] (5) at (1, 0) {$v_3$};
		\node [style=none] (6) at (-3.25, 2.5) {$f_1$};
		\node [style=none] (7) at (-1, 3.75) {$f_2$};
		\node [style=none] (8) at (0, 1.75) {$f_3$};
		\node [style=none] (9) at (1.75, 2.3) {$f_4$};
		\node [style=whitenode] (11) at (-15.25, 2.5) {$v_0$};
		\node [style=whitenode] (12) at (-13, 5) {$v_1$};
		\node [style=whitenode1] (13) at (-13, 0) {};
		\node [style=whitenode1] (14) at (-10, 5) {};
		\node [style=whitenode] (15) at (-8, 2.5) {$v_2$};
		\node [style=whitenode] (16) at (-10, 0) {$v_3$};
		\node [style=none] (17) at (-14.25, 2.5) {$f_1$};
		\node [style=none] (18) at (-12, 3.75) {$f_2$};
		\node [style=none] (19) at (-10.5, 2.5) {$f_3$};
		\node [style=none] (20) at (-10.5, 0.75) {$f_4$};
		\node [style=whitenode1] (21) at (9, 5) {};
		\node [style=whitenode1] (22) at (6.75, 2.5) {};
		\node [style=whitenode1] (23) at (9, 0) {};
		\node [style=whitenode1] (24) at (11.25, 2.5) {};
		\node [style=whitenode1] (25) at (14.5, 5) {};
		\node [style=whitenode1] (26) at (12.25, 2.5) {};
		\node [style=whitenode1] (27) at (14.5, 0) {};
		\node [style=whitenode1] (28) at (16.75, 2.5) {};
		\node [style=none] (29) at (8, 2.5) {$f_1$};
		\node [style=none] (30) at (9.75, 2.5) {$f_2$};
		\node [style=none] (31) at (13.75, 2.5) {$f_3$};
		\node [style=none] (32) at (15.3, 2.5) {$f_4$};
		\node [style=whitenode1] (33) at (-13.5, -4) {};
		\node [style=whitenode1] (34) at (-15.75, -6.5) {};
		\node [style=whitenode1] (35) at (-13.5, -9) {};
		\node [style=whitenode1] (36) at (-11.25, -6.5) {};
		\node [style=whitenode1] (37) at (-9, -4) {};
		\node [style=whitenode1] (38) at (-11.25, -6.5) {};
		\node [style=whitenode1] (39) at (-9, -9) {};
		\node [style=whitenode1] (40) at (-6.75, -6.5) {};
		\node [style=none] (41) at (-14.5, -6.5) {$f_1$};
		\node [style=none] (42) at (-12.75, -6.5) {$f_2$};
		\node [style=none] (43) at (-9.75, -6.5) {$f_3$};
		\node [style=none] (44) at (-8.25, -6.5) {$f_4$};
		\node [style=whitenode1] (45) at (-2.5, -4) {};
		\node [style=whitenode1] (46) at (-4.75, -6.5) {};
		\node [style=whitenode1] (47) at (-2.5, -9) {};
		\node [style=whitenode1] (48) at (-0.25, -6.5) {};
		\node [style=whitenode1] (49) at (2, -4) {};
		\node [style=whitenode1] (50) at (-0.25, -6.5) {};
		\node [style=whitenode1] (51) at (2, -9) {};
		\node [style=whitenode1] (52) at (4.25, -6.5) {};
		\node [style=none] (53) at (-3.5, -6.5) {$f_1$};
		\node [style=none] (54) at (-1.75, -6.5) {$f_2$};
		\node [style=none] (55) at (2, -5.5) {$f_3$};
		\node [style=none] (56) at (2, -7.5) {$f_4$};
		\node [style=whitenode1] (57) at (11, -6.75) {};
		\node [style=whitenode1] (58) at (8.5, -4.5) {};
		\node [style=whitenode1] (59) at (6, -6.75) {};
		\node [style=whitenode1] (60) at (8.5, -9) {};
		\node [style=whitenode1] (61) at (16, -6.75) {};
		\node [style=whitenode1] (62) at (13.5, -4.5) {};
		\node [style=whitenode1] (63) at (11, -6.75) {};
		\node [style=whitenode1] (64) at (13.5, -9) {};
		\node [style=none] (65) at (8.5, -5.75) {$f_1$};
		\node [style=none] (66) at (8.5, -7.5) {$f_2$};
		\node [style=none] (67) at (13.5, -5.75) {$f_3$};
		\node [style=none] (68) at (13.5, -7.5) {$f_4$};
		\node [style=none] (69) at (-4.75, -1.5) {};
		\node [style=none] (70) at (-12, -2) {$D_1$};
		\node [style=none] (71) at (-1, -2) {$D_2$};
		\node [style=none] (72) at (11.75, -2) {$D_3$};
		\node [style=none] (73) at (-11.75, -10) {$D_4$};
		\node [style=none] (74) at (0, -10) {$D_5$};
		\node [style=none] (75) at (11.5, -10) {$D_6$};
	\end{pgfonlayer}
	\begin{pgfonlayer}{edgelayer}
		\draw [style={shadow_}] (0.center)
			 to (2.center)
			 to (3.center)
			 to [bend left=360, looseness=1.25] (1.center)
			 to cycle;
		\draw [style={black_thick}] (1) to (2);
		\draw [style={black_thick}] (1) to (0);
		\draw [style={black_thick}] (0) to (2);
		\draw [style={black_thick}] (1) to (3);
		\draw [style={shadow_pure}] (4.center)
			 to (5.center)
			 to (2.center)
			 to (3.center)
			 to cycle;
		\draw [style={black_thick}] (3) to (2);
		\draw [style={black_thick}] (2) to (5);
		\draw [style={black_thick}] (5) to (3);
		\draw [style={black_thick}] (3) to (4);
		\draw [style={black_thick}] (4) to (5);
		\draw [style={shadow_}] (11.center)
			 to (13.center)
			 to (14.center)
			 to [bend left=360, looseness=1.25] (12.center)
			 to cycle;
		\draw [style={black_thick}] (12) to (13);
		\draw [style={black_thick}] (12) to (11);
		\draw [style={black_thick}] (11) to (13);
		\draw [style={black_thick}] (12) to (14);
		\draw [style={shadow_pure}] (15.center)
			 to (16.center)
			 to (13.center)
			 to (14.center)
			 to cycle;
		\draw [style={black_thick}] (14) to (13);
		\draw [style={black_thick}] (13) to (16);
		\draw [style={black_thick}] (14) to (15);
		\draw [style={black_thick}] (15) to (16);
		\draw [style={black_thick}] (13) to (15);
		\draw [style={shadow_}] (21.center)
			 to (22.center)
			 to (23.center)
			 to (24.center)
			 to cycle;
		\draw [style={shadow_pure}] (25.center)
			 to (26.center)
			 to (27.center)
			 to (28.center)
			 to cycle;
		\draw [style={black_thick}] (21) to (22);
		\draw [style={black_thick}] (23) to (22);
		\draw [style={black_thick}] (23) to (24);
		\draw [style={black_thick}] (24) to (21);
		\draw [style={black_thick}] (21) to (23);
		\draw [style={black_thick}] (26) to (25);
		\draw [style={black_thick}] (25) to (28);
		\draw [style={black_thick}] (28) to (27);
		\draw [style={black_thick}] (27) to (26);
		\draw [style={black_thick}] (25) to (27);
		\draw [style={shadow_}] (33.center)
			 to (34.center)
			 to (35.center)
			 to (36.center)
			 to cycle;
		\draw [style={shadow_pure}] (37.center)
			 to (38.center)
			 to (39.center)
			 to (40.center)
			 to cycle;
		\draw [style={black_thick}] (33) to (34);
		\draw [style={black_thick}] (35) to (34);
		\draw [style={black_thick}] (35) to (36);
		\draw [style={black_thick}] (36) to (33);
		\draw [style={black_thick}] (33) to (35);
		\draw [style={black_thick}] (38) to (37);
		\draw [style={black_thick}] (37) to (40);
		\draw [style={black_thick}] (40) to (39);
		\draw [style={black_thick}] (39) to (38);
		\draw [style={black_thick}] (37) to (39);
		\draw [style={shadow_}] (45.center)
			 to (46.center)
			 to (47.center)
			 to (48.center)
			 to cycle;
		\draw [style={shadow_pure}] (49) to (50);
		\draw [style={shadow_pure}] (50) to (51);
		\draw [style={shadow_pure}, in=225, out=45] (51) to (52);
		\draw [style={shadow_pure}] (52) to (49);
		\draw [style={black_thick}] (45) to (46);
		\draw [style={black_thick}] (47) to (46);
		\draw [style={black_thick}] (47) to (48);
		\draw [style={black_thick}] (48) to (45);
		\draw [style={black_thick}] (45) to (47);
		\draw [style={shadow_pure}] (52.center)
			 to (49.center)
			 to (50.center)
			 to (51.center)
			 to cycle;
		\draw [style={black_thick}] (49) to (50);
		\draw [style={black_thick}] (50) to (51);
		\draw [style={black_thick}] (51) to (52);
		\draw [style={black_thick}] (52) to (49);
		\draw [style={black_thick}] (50) to (52);
		\draw [style={shadow_}] (57.center)
			 to (58.center)
			 to (59.center)
			 to (60.center)
			 to cycle;
		\draw [style={shadow_pure}] (61.center)
			 to (62.center)
			 to (63.center)
			 to (64.center)
			 to cycle;
		\draw [style={black_thick}] (57) to (58);
		\draw [style={black_thick}] (59) to (58);
		\draw [style={black_thick}] (59) to (60);
		\draw [style={black_thick}] (60) to (57);
		\draw [style={black_thick}] (57) to (59);
		\draw [style={black_thick}] (62) to (61);
		\draw [style={black_thick}] (61) to (64);
		\draw [style={black_thick}] (64) to (63);
		\draw [style={black_thick}] (63) to (62);
		\draw [style={black_thick}] (61) to (63);
	\end{pgfonlayer}
\end{tikzpicture}

%	\includegraphics[width=17 cm]
%	{Duf-1.eps}
	
%	(a) $D_1$ 	\hspace{4 cm} 	(b) $D_2$ 		\hspace{4 cm} 	(c) $D_3$ 

	\caption{Possible 
		$1$-planar drawings of $D'_{uf}$
		with properties P1 and P2
	}     
	\label{Duf}
\end{figure}
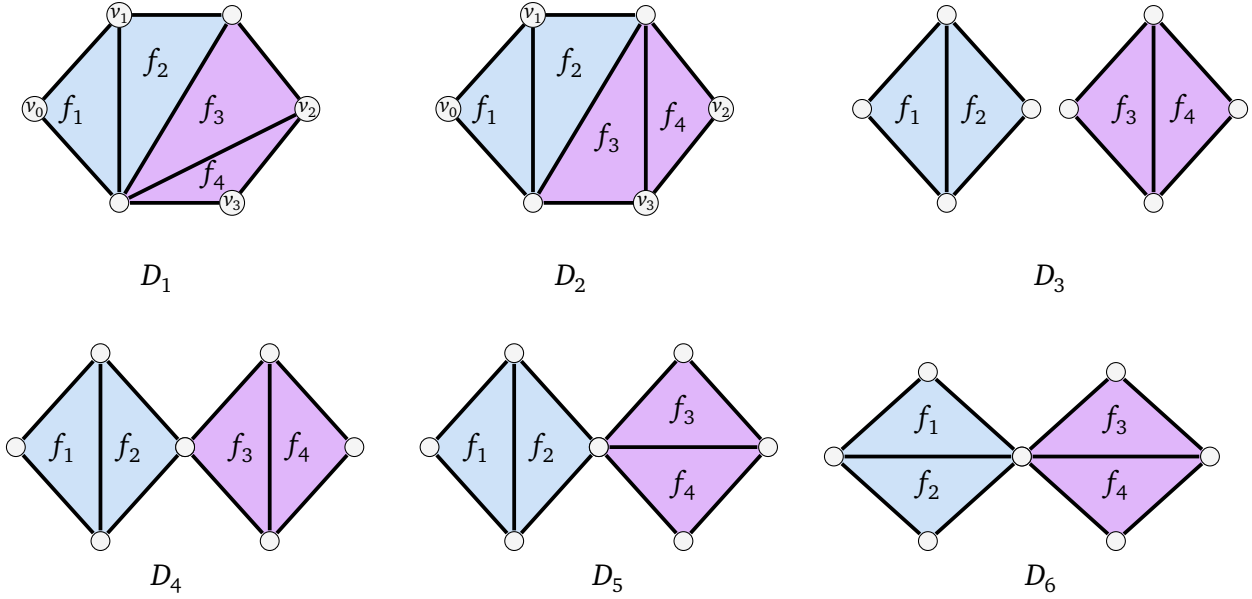

We first prove the following 
conclusion. 
Let $D':=D|(T-v)$. 

\begin{claim}\label{nc-1}
	If $n=8$, then $D'$ is triangulated 
	and 
	$D'_{uf}$ is isomorphic to $D_1$ or $D_4$ in Fig.~\ref{Duf}
	with properties P1 and P2.
\end{claim}

Note that $D'$ is a $1$-planar drawing 
of $T-v$. 
By Proposition~\ref{4t-order7}, 
$D'$ is isomorphic to $B_1, B_2$ or $B_3$ in Fig.~\ref{4t-or7}. 
By Lemma~\ref{insert-v} (i), 
$v$ cannot be within any face of $B_3$.
Thus, 
$D'$ is isomorphic to $B_1$ or $B_2$.
Clearly, $D'$ is triangulated. 

If $D'$ is isomorphic to $B_1$,
then $D'_{uf}$ is isomorphic to $D_4$ 
in  Fig.~\ref{Duf}.
If $D'$ is isomorphic to $B_2$,
then $D'_{uf}$ is isomorphic to $D_1$ 
in  Fig.~\ref{Duf} with the property that 
there is no edge in $D'$ 
joining $v_2$ to $v_0$ or $v_1$,
or $v_3$ to $v_1$. 
In both case, properties P1 and P2 hold.

Claim~\ref{nc-1} holds.

\begin{claim}\label{nc0}
	If $D'$ is triangulated and 
$D'_{uf}$ is isomorphic to $D_i$ 
	in Fig.~\ref{Duf} for some $i\in \{1,2,\cdots,6\}$ with properties 
	P1 and P2, 
	then $D$ is obtained from $D'$ 
	by a 4-join operation and 
	$D_{uf}$ is also 
	isomorphic to 
	isomorphic to some $D_i$ 
	in Fig.~\ref{Duf}, where $i\in \{1,2,\dots,6\}$,
	with properties 
	P1 and P2.
\end{claim}

By Lemma~\ref{insert-v} (ii), 
$v$ is within a uncrossed face of $D'$. 

\noindent {\bf Case 1}: 
$D'_{uf}$ is isomorphic to $D_1$.

If $v$ is within $f_1$ or $f_4$, 
then, by Lemma~\ref{use4-j},
$D=D'\oplus (f_1,f_2)$ 
or $D=D'\oplus (f_4,f_3)$ respectively. 
Clearly, $D_{uf}$ is isomorphic to $D_5$ and properties P1 and P2 hold
(see Fig.~\ref{uc-faces-3} (a)).

If $v$ is within $f_2$ or $f_3$, 
then, by Lemma~\ref{use4-j},
$D=D'\oplus (f_2,f_1)$ 
or $D=D'\oplus (f_3,f_4)$ respectively. 
Clearly, $D_{uf}$ is isomorphic to $D_2$ and properties P1 and P2 hold
(see Fig.~\ref{uc-faces-3} (b)).

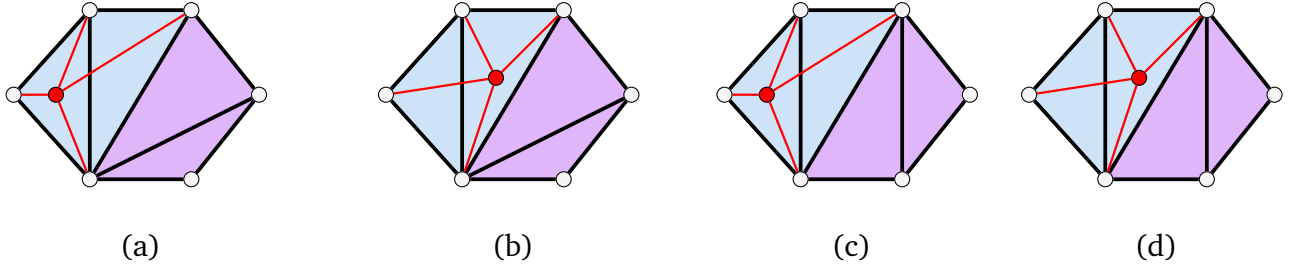
\begin{figure}[h!]
	\centering 
	\begin{tikzpicture}[scale=0.45]
		\tikzset{whitenode/.style={fill={rgb,255: red,245; green,245; blue,245}, draw=black, shape=circle, minimum size=0.2cm, inner sep=0.1pt}}
		\tikzset{shadow_/.style={fill={rgb,255: red,206; green,226; blue,248}, draw={none}, line width=0.4mm}}
		\tikzset{shadow_pure/.style={fill={rgb,255: red,224; green,182; blue,250}, draw={none}, line width=0.4mm}}
		\begin{pgfonlayer}{nodelayer}
			\node [style=whitenode] (0) at (-9.25, 2.5) {};
			\node [style=whitenode] (1) at (-7, 5) {};
			\node [style=whitenode] (2) at (-7, 0) {};
			\node [style=whitenode] (3) at (-4, 5) {};
			\node [style=whitenode] (4) at (-2, 2.5) {};
			\node [style=whitenode] (5) at (-4, 0) {};
			\node [style=whitenode] (10) at (-20.25, 2.5) {};
			\node [style=whitenode] (11) at (-18, 5) {};
			\node [style=whitenode] (12) at (-18, 0) {};
			\node [style=whitenode] (13) at (-15, 5) {};
			\node [style=whitenode] (14) at (-13, 2.5) {};
			\node [style=whitenode] (15) at (-15, 0) {};
			\node [style=none] (20) at (-16.5, -2) {(a)};
			\node [style=none] (21) at (-5.5, -2) {(b)};
			\node [style=whitenode] (23) at (0.75, 2.5) {};
			\node [style=whitenode] (24) at (3, 5) {};
			\node [style=whitenode] (25) at (3, 0) {};
			\node [style=whitenode] (26) at (6, 5) {};
			\node [style=whitenode] (27) at (8, 2.5) {};
			\node [style=whitenode] (28) at (6, 0) {};
			\node [style=none] (35) at (4.5, -2) {(c)};
			\node [style=whitenode] (36) at (9.75, 2.5) {};
			\node [style=whitenode] (37) at (12, 5) {};
			\node [style=whitenode] (38) at (12, 0) {};
			\node [style=whitenode] (39) at (15, 5) {};
			\node [style=whitenode] (40) at (17, 2.5) {};
			\node [style=whitenode] (41) at (15, 0) {};
			\node [style=none] (46) at (13.5, -2) {(d)};
			\node [style=rednode] (47) at (-19, 2.5) {};
			\node [style=rednode] (48) at (-6, 3) {};
			\node [style=rednode] (49) at (2, 2.5) {};
			\node [style=rednode] (50) at (13, 3) {};
		\end{pgfonlayer}
		\begin{pgfonlayer}{edgelayer}
			\draw [style={shadow_}] (0.center)
			to (2.center)
			to (3.center)
			to [bend left=360, looseness=1.25] (1.center)
			to cycle;
			\draw [style={black_thick}] (1) to (2);
			\draw [style={black_thick}] (1) to (0);
			\draw [style={black_thick}] (0) to (2);
			\draw [style={black_thick}] (1) to (3);
			\draw [style={shadow_pure}] (4.center)
			to (5.center)
			to (2.center)
			to (3.center)
			to cycle;
			\draw [style={black_thick}] (3) to (2);
			\draw [style={black_thick}] (2) to (5);
			%\draw [style={black_thick}] (5) to (3);
			\draw [style={black_thick}] (3) to (4);
			\draw [style={black_thick}] (4) to (5);
			\draw [style={black_thick}] (2) to (4);
			\draw [style={shadow_}] (10.center)
			to (12.center)
			to (13.center)
			to [bend left=360, looseness=1.25] (11.center)
			to cycle;
			\draw [style={black_thick}] (11) to (12);
			\draw [style={black_thick}] (11) to (10);
			\draw [style={black_thick}] (10) to (12);
			\draw [style={black_thick}] (11) to (13);
			\draw [style={shadow_pure}] (14.center)
			to (15.center)
			to (12.center)
			to (13.center)
			to cycle;
			\draw [style={black_thick}] (13) to (12);
			\draw [style={black_thick}] (12) to (15);
			\draw [style={black_thick}] (13) to (14);
			\draw [style={black_thick}] (14) to (15);
			\draw [style={black_thick}] (12) to (14);
			\draw [style={shadow_}] (23.center)
			to (25.center)
			to (26.center)
			to [bend left=360, looseness=1.25] (24.center)
			to cycle;
			\draw [style={black_thick}] (24) to (25);
			\draw [style={black_thick}] (24) to (23);
			\draw [style={black_thick}] (23) to (25);
			\draw [style={black_thick}] (24) to (26);
			\draw [style={shadow_pure}] (27.center)
			to (28.center)
			to (25.center)
			to (26.center)
			to cycle;
			\draw [style={black_thick}] (26) to (25);
			\draw [style={black_thick}] (25) to (28);
			\draw [style={black_thick}] (28) to (26);
			\draw [style={black_thick}] (26) to (27);
			\draw [style={black_thick}] (27) to (28);
			\draw [style={shadow_}] (36.center)
			to (38.center)
			to (39.center)
			to [bend left=360, looseness=1.25] (37.center)
			to cycle;
			\draw [style={black_thick}] (37) to (38);
			\draw [style={black_thick}] (37) to (36);
			\draw [style={black_thick}] (36) to (38);
			\draw [style={black_thick}] (37) to (39);
			\draw [style={shadow_pure}] (40.center)
			to (41.center)
			to (38.center)
			to (39.center)
			to cycle;
			\draw [style={black_thick}] (39) to (38);
			\draw [style={black_thick}] (38) to (41);
			\draw [style={black_thick}] (41) to (39);
			\draw [style={black_thick}] (39) to (40);
			\draw [style={black_thick}] (40) to (41);
			\draw [style=rededge] (10) to (47);
			\draw [style=rededge] (47) to (11);
			\draw [style=rededge] (47) to (12);
			\draw [style=rededge] (47) to (13);
			\draw [style=rededge] (0) to (48);
			\draw [style=rededge] (1) to (48);
			\draw [style=rededge] (48) to (2);
			\draw [style=rededge] (48) to (3);
			\draw [style=rededge] (24) to (49);
			\draw [style=rededge] (49) to (23);
			\draw [style=rededge] (49) to (25);
			\draw [style=rededge] (49) to (26);
			\draw [style=rededge] (37) to (50);
			\draw [style=rededge] (50) to (38);
			\draw [style=rededge] (50) to (39);
			\draw [style=rededge] (50) to (36);
		\end{pgfonlayer}
	\end{tikzpicture}
	
	%\includegraphics[width=17 cm]
	%{uc-faces-3.eps}
	
	%(a) \hspace{4 cm} (b)
	%\hspace{4 cm} (c)
	%\hspace{4 cm} (d)
	
	\caption{$D$ is obtained from $D'$ 
		by a $4$-join operation}     
	\label{uc-faces-3}
\end{figure}

\noindent {\bf Case 2}: 
$D'_{uf}$ is isomorphic to $D_2$.

If $v$ is within $f_1$ or $f_4$, 
then, by Lemma~\ref{use4-j},
$D=D'\oplus (f_1,f_2)$ 
or $D=D'\oplus (f_4,f_3)$ respectively. 
Clearly, $D_{uf}$ is isomorphic to $D_4$ and properties P1 and P2 hold
(see Fig.~\ref{uc-faces-3} (c)).

If $v$ is within $f_2$ or $f_3$, 
then, by Lemma~\ref{use4-j},
$D=D'\oplus (f_2,f_1)$ 
or $D=D'\oplus (f_3,f_4)$ respectively. 
Clearly, $D_{uf}$ is isomorphic to $D_1$ and properties P1 and P2 hold
(see Fig.~\ref{uc-faces-3} (d)).

\noindent {\bf Case 3}: 
$D'_{uf}$ is isomorphic to $D_i$ 
in Fig.~\ref{Duf} for some $i\in \{3,4,5,6\}$.

By Lemma~\ref{use4-j},
$D=D'\oplus (f_i,f_{i+1})$ 
or $D=D'\oplus (f_{i+1},f_{i})$ 
for some $i\in \{1,3\}$. Observe that 
\begin{itemize}[nolistsep]
	\item if $D'_{uf}$ is isomorphic to $D_3$, 
then $D_{uf}$ is also 
isomorphic to $D_3$ (see Fig.~\ref{uc-faces} (a));
\item 
if $D'_{uf}$ is isomorphic to $D_4$, 
then $D_{uf}$ is 
isomorphic to $D_3$ or $D_5$
(see Fig.~\ref{uc-faces} (b) and (c));
\item 
if $D'_{uf}$ is isomorphic to $D_5$, 
then $D_{uf}$ is 
isomorphic to $D_3$, $D_4$ or $D_6$
(see Fig.~\ref{uc-faces} (d), (e) and (f)),
and 
\item 
if $D'_{uf}$ is isomorphic to $D_6$, 
then $D_{uf}$ is 
isomorphic to $D_5$
(see Fig.~\ref{uc-faces} (g)).
\end{itemize}
In all cases above, properties P1 and P2 hold.

\begin{figure}[h!]
	\centering 
\tikzset{whitenode/.style={fill={rgb,255: red,245; green,245; blue,245}, draw=black, shape=circle, minimum size=0.2cm, inner sep=0.1pt}}
\tikzset{shadow_/.style={fill={rgb,255: red,206; green,226; blue,248}, draw={none}, line width=0.4mm}}
\tikzset{shadow_pure/.style={fill={rgb,255: red,224; green,182; blue,250}, draw={none}, line width=0.4mm}}
\begin{tikzpicture}[scale=0.38]
	\begin{pgfonlayer}{nodelayer}
		\node [style=whitenode] (0) at (-15.25, 5.5) {};
		\node [style=whitenode] (1) at (-17.5, 3) {};
		\node [style=whitenode] (2) at (-15.25, 0.5) {};
		\node [style=whitenode] (3) at (-13, 3) {};
		\node [style=whitenode] (4) at (-9.75, 5.5) {};
		\node [style=whitenode] (5) at (-12, 3) {};
		\node [style=whitenode] (6) at (-9.75, 0.5) {};
		\node [style=whitenode] (7) at (-7.5, 3) {};
		\node [style=none] (8) at (-12.75, -1.5) {(a)};
		\node [style=rednode] (9) at (-16.25, 3) {};
		\node [style=whitenode] (10) at (-2.25, 5.5) {};
		\node [style=whitenode] (11) at (-4.5, 3) {};
		\node [style=whitenode] (12) at (-2.25, 0.5) {};
		\node [style=whitenode] (13) at (0, 3) {};
		\node [style=whitenode] (14) at (2.25, 5.5) {};
		\node [style=whitenode] (15) at (0, 3) {};
		\node [style=whitenode] (16) at (2.25, 0.5) {};
		\node [style=whitenode] (17) at (4.5, 3) {};
		\node [style=rednode] (23) at (-3.25, 3) {};
		\node [style=whitenode] (24) at (10, 5.5) {};
		\node [style=whitenode] (25) at (7.75, 3) {};
		\node [style=whitenode] (26) at (10, 0.5) {};
		\node [style=whitenode] (27) at (12.25, 3) {};
		\node [style=whitenode] (28) at (14.5, 5.5) {};
		\node [style=whitenode] (29) at (12.25, 3) {};
		\node [style=whitenode] (30) at (14.5, 0.5) {};
		\node [style=whitenode] (31) at (16.75, 3) {};
		\node [style=rednode] (37) at (11, 3) {};
		\node [style=none] (38) at (0, -1.5) {(b)};
		\node [style=none] (39) at (12.25, -1.5) {(c)};
		\node [style=whitenode] (40) at (18, -7) {};
		\node [style=whitenode] (41) at (15.5, -4.75) {};
		\node [style=whitenode] (42) at (13, -7) {};
		\node [style=whitenode] (43) at (15.5, -9.25) {};
		\node [style=whitenode] (44) at (23, -7) {};
		\node [style=whitenode] (45) at (20.5, -4.75) {};
		\node [style=whitenode] (46) at (18, -7) {};
		\node [style=whitenode] (47) at (20.5, -9.25) {};
		\node [style=whitenode] (52) at (-18.75, -4.5) {};
		\node [style=whitenode] (53) at (-21, -7) {};
		\node [style=whitenode] (54) at (-18.75, -9.5) {};
		\node [style=whitenode] (55) at (-16.5, -7) {};
		\node [style=whitenode] (56) at (-14.25, -4.5) {};
		\node [style=whitenode] (57) at (-16.5, -7) {};
		\node [style=whitenode] (58) at (-14.25, -9.5) {};
		\node [style=whitenode] (59) at (-12, -7) {};
		\node [style=whitenode] (64) at (-6.75, -4.5) {};
		\node [style=whitenode] (65) at (-9, -7) {};
		\node [style=whitenode] (66) at (-6.75, -9.5) {};
		\node [style=whitenode] (67) at (-4.5, -7) {};
		\node [style=whitenode] (68) at (-2.25, -4.5) {};
		\node [style=whitenode] (69) at (-4.5, -7) {};
		\node [style=whitenode] (70) at (-2.25, -9.5) {};
		\node [style=whitenode] (71) at (0, -7) {};
		\node [style=whitenode] (76) at (4.5, -4.5) {};
		\node [style=whitenode] (77) at (2, -7) {};
		\node [style=whitenode] (78) at (4.5, -9.5) {};
		\node [style=whitenode] (79) at (6.75, -7) {};
		\node [style=whitenode] (80) at (9, -4.5) {};
		\node [style=whitenode] (81) at (6.75, -7) {};
		\node [style=whitenode] (82) at (9, -9.5) {};
		\node [style=whitenode] (83) at (11.25, -7) {};
		\node [style=none] (88) at (-16.5, -11.5) {(d)};
		\node [style=none] (89) at (-16.5, -10.75) {};
		\node [style=none] (90) at (-4.5, -11.5) {(e)};
		\node [style=none] (91) at (7, -11.5) {(f)};
		\node [style=none] (92) at (18.5, -11.5) {(g)};
		\node [style=rednode] (93) at (-18.75, -6.25) {};
		\node [style=rednode] (94) at (-3, -7) {};
		\node [style=rednode] (95) at (10, -7) {};
		\node [style=rednode] (96) at (15.5, -6.25) {};
	\end{pgfonlayer}
	\begin{pgfonlayer}{edgelayer}
		\draw [style={shadow_}] (0.center)
			 to (1.center)
			 to (2.center)
			 to (3.center)
			 to cycle;
		\draw [style={shadow_pure}] (4.center)
			 to (5.center)
			 to (6.center)
			 to (7.center)
			 to cycle;
		\draw [style={black_thick}] (0) to (1);
		\draw [style={black_thick}] (2) to (1);
		\draw [style={black_thick}] (2) to (3);
		\draw [style={black_thick}] (3) to (0);
		\draw [style={black_thick}] (0) to (2);
		\draw [style={black_thick}] (5) to (4);
		\draw [style={black_thick}] (4) to (7);
		\draw [style={black_thick}] (7) to (6);
		\draw [style={black_thick}] (6) to (5);
		\draw [style={black_thick}] (4) to (6);
		\draw [style=rededge] (0) to (9);
		\draw [style=rededge] (9) to (1);
		\draw [style=rededge] (9) to (2);
		\draw [style=rededge] (9) to (3);
		\draw [style={shadow_}] (10.center)
			 to (11.center)
			 to (12.center)
			 to (13.center)
			 to cycle;
		\draw [style={shadow_pure}] (14.center)
			 to (15.center)
			 to (16.center)
			 to (17.center)
			 to cycle;
		\draw [style={black_thick}] (10) to (11);
		\draw [style={black_thick}] (12) to (11);
		\draw [style={black_thick}] (12) to (13);
		\draw [style={black_thick}] (13) to (10);
		\draw [style={black_thick}] (10) to (12);
		\draw [style={black_thick}] (15) to (14);
		\draw [style={black_thick}] (14) to (17);
		\draw [style={black_thick}] (17) to (16);
		\draw [style={black_thick}] (16) to (15);
		\draw [style={black_thick}] (14) to (16);
		\draw [style=rededge] (10) to (23);
		\draw [style=rededge] (23) to (11);
		\draw [style=rededge] (23) to (12);
		\draw [style=rededge] (23) to (15);
		\draw [style={shadow_}] (24.center)
			 to (25.center)
			 to (26.center)
			 to (27.center)
			 to cycle;
		\draw [style={shadow_pure}] (28.center)
			 to (29.center)
			 to (30.center)
			 to (31.center)
			 to cycle;
		\draw [style={black_thick}] (24) to (25);
		\draw [style={black_thick}] (26) to (25);
		\draw [style={black_thick}] (26) to (27);
		\draw [style={black_thick}] (27) to (24);
		\draw [style={black_thick}] (24) to (26);
		\draw [style={black_thick}] (29) to (28);
		\draw [style={black_thick}] (28) to (31);
		\draw [style={black_thick}] (31) to (30);
		\draw [style={black_thick}] (30) to (29);
		\draw [style={black_thick}] (28) to (30);
		\draw [style=rededge] (24) to (37);
		\draw [style=rededge] (37) to (26);
		\draw [style=rededge] (25) to (37);
		\draw [style=rededge] (37) to (29);
		\draw [style={shadow_}] (40.center)
			 to (41.center)
			 to (42.center)
			 to (43.center)
			 to cycle;
		\draw [style={shadow_pure}] (44.center)
			 to (45.center)
			 to (46.center)
			 to (47.center)
			 to cycle;
		\draw [style={black_thick}] (40) to (41);
		\draw [style={black_thick}] (42) to (41);
		\draw [style={black_thick}] (42) to (43);
		\draw [style={black_thick}] (43) to (40);
		\draw [style={black_thick}] (40) to (42);
		\draw [style={black_thick}] (45) to (44);
		\draw [style={black_thick}] (44) to (47);
		\draw [style={black_thick}] (47) to (46);
		\draw [style={black_thick}] (46) to (45);
		\draw [style={black_thick}] (44) to (46);
		\draw [style={shadow_}] (52.center)
			 to (53.center)
			 to (54.center)
			 to (55.center)
			 to cycle;
		\draw [style={shadow_pure}] (56) to (57);
		\draw [style={shadow_pure}] (57) to (58);
		\draw [style={shadow_pure}, in=225, out=45] (58) to (59);
		\draw [style={shadow_pure}] (59) to (56);
		\draw [style={black_thick}] (52) to (53);
		\draw [style={black_thick}] (54) to (53);
		\draw [style={black_thick}] (54) to (55);
		\draw [style={black_thick}] (55) to (52);
		\draw [style={shadow_pure}] (59.center)
			 to (56.center)
			 to (57.center)
			 to (58.center)
			 to cycle;
		\draw [style={black_thick}] (56) to (57);
		\draw [style={black_thick}] (57) to (58);
		\draw [style={black_thick}] (58) to (59);
		\draw [style={black_thick}] (59) to (56);
		\draw [style={shadow_}] (64.center)
			 to (65.center)
			 to (66.center)
			 to (67.center)
			 to cycle;
		\draw [style={shadow_pure}] (68) to (69);
		\draw [style={shadow_pure}] (69) to (70);
		\draw [style={shadow_pure}, in=225, out=45] (70) to (71);
		\draw [style={shadow_pure}] (71) to (68);
		\draw [style={black_thick}] (64) to (65);
		\draw [style={black_thick}] (66) to (65);
		\draw [style={black_thick}] (66) to (67);
		\draw [style={black_thick}] (67) to (64);
		\draw [style={shadow_pure}] (71.center)
			 to (68.center)
			 to (69.center)
			 to (70.center)
			 to cycle;
		\draw [style={black_thick}] (68) to (69);
		\draw [style={black_thick}] (69) to (70);
		\draw [style={black_thick}] (70) to (71);
		\draw [style={black_thick}] (71) to (68);
		\draw [style={shadow_}] (76.center)
			 to (77.center)
			 to (78.center)
			 to (79.center)
			 to cycle;
		\draw [style={shadow_pure}] (80) to (81);
		\draw [style={shadow_pure}] (81) to (82);
		\draw [style={shadow_pure}, in=225, out=45] (82) to (83);
		\draw [style={shadow_pure}] (83) to (80);
		\draw [style={black_thick}] (76) to (77);
		\draw [style={black_thick}] (78) to (77);
		\draw [style={black_thick}] (78) to (79);
		\draw [style={black_thick}] (79) to (76);
		\draw [style={shadow_pure}] (83.center)
			 to (80.center)
			 to (81.center)
			 to (82.center)
			 to cycle;
		\draw [style={black_thick}] (80) to (81);
		\draw [style={black_thick}] (81) to (82);
		\draw [style={black_thick}] (82) to (83);
		\draw [style={black_thick}] (83) to (80);
		\draw [style={black_thick}] (53) to (57);
		\draw [style={black_thick}] (56) to (58);
		\draw [style={black_thick}] (65) to (69);
		\draw [style={black_thick}] (68) to (70);
		\draw [style={black_thick}] (77) to (81);
		\draw [style={black_thick}] (80) to (82);
		\draw [style=rededge] (52) to (93);
		\draw [style=rededge] (93) to (53);
		\draw [style=rededge] (93) to (57);
		\draw [style=rededge] (93) to (54);
		\draw [style=rededge] (94) to (68);
		\draw [style=rededge] (69) to (94);
		\draw [style=rededge] (94) to (70);
		\draw [style=rededge] (94) to (71);
		\draw [style=rededge] (80) to (95);
		\draw [style=rededge] (95) to (82);
		\draw [style=rededge] (81) to (95);
		\draw [style=rededge] (95) to (83);
		\draw [style=rededge] (41) to (96);
		\draw [style=rededge] (96) to (42);
		\draw [style=rededge] (96) to (46);
		\draw [style=rededge] (96) to (43);
	\end{pgfonlayer}
\end{tikzpicture}

	\caption{$D$ is obtained from $D'$ 
		by a $4$-join operation}     
	\label{uc-faces}
\end{figure}
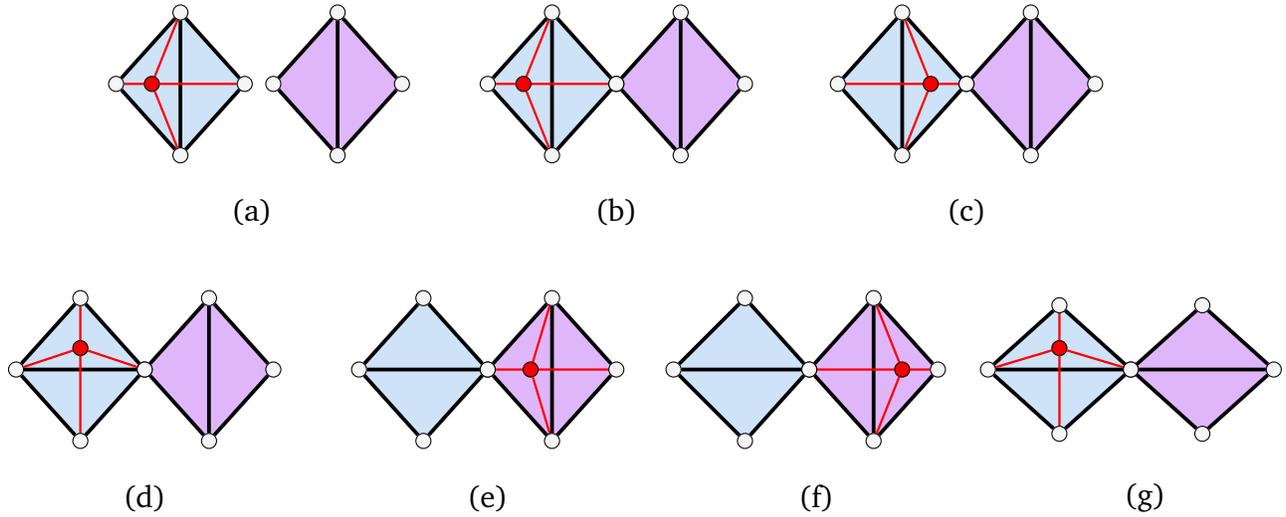

Thus, Claim~\ref{nc0} holds.
By Claims~\ref{nc-1} and~\ref{nc0}, 
every $1$-planar drawing of a $1$-planar $4$-tree of order at least $8$ belongs to $\Phi$.
The result holds.
\end{proof}

\section{Proof of Theorem \ref{4_conn}}

We first provides a sufficient condition for a graph to be 
Hamiltonian-connected. 

\begin{proposition}
	\label{Ham-con}
	Let $G$ be a graph with 
	two vertices  $a$ and $u$ 
	such that 
	$d_G(u)\ge 3$, $N_G[u]\subseteq N_G[a]$ and 
	$|N_G(a)\setminus N_G[u]|\le 1$.
	If both $G-u$ and $G-\{u,a\}$ 
	are Hamiltonian-connected,
	then $G$ is also Hamiltonian-connected.
\end{proposition}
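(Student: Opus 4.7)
The plan is to show, for every ordered pair $(s,t)$ of distinct vertices of $G$, that a Hamiltonian $s\!\to\!t$ path of $G$ can be assembled from a Hamiltonian path of $G-u$ or of $G-\{u,a\}$ together with one or two extra edges at $u,a$. I would first translate the hypotheses into a cleaner form: $N_G[u]\subseteq N_G[a]$ forces $ua\in E(G)$ and $N_G(u)\setminus\{a\}\subseteq N_G(a)$, so writing $N_G(u)=\{a,x_1,\dots,x_k\}$ we have $k=d_G(u)-1\ge 2$; and $|N_G(a)\setminus N_G[u]|\le 1$ says that $N_G(a)\setminus\{u,x_1,\dots,x_k\}$ consists of at most one extra vertex $w$.

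For the three easy cases I would work by prepending $u$ or $a\!-\!u$ to a Hamiltonian path supplied by the hypothesis. If $\{s,t\}=\{u,a\}$, I pick any $x_i$, take a Hamiltonian $x_i\!\to\!a$ path in $G-u$, and prepend $u$. If $s=u$ and $t\ne a$, I take a Hamiltonian $a\!\to\!t$ path in $G-u$ and prepend $u$. If $s=a$ and $t\ne u$, the inequality $k\ge 2$ allows a choice of $x_i\in\{x_1,\dots,x_k\}\setminus\{t\}$; I then take a Hamiltonian $x_i\!\to\!t$ path in $G-\{u,a\}$ and prepend $a\!-\!u$. All three concatenations are legal because $u\sim a$ and $u\sim x_i$.

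The only substantive case is $s,t\notin\{u,a\}$. Here I would take a Hamiltonian $s\!\to\!t$ path $P$ of $G-u$; since $a\notin\{s,t\}$, the vertex $a$ is internal in $P$, so it has two distinct path-neighbors $p,q$, both belonging to $N_G(a)\setminus\{u\}\subseteq\{x_1,\dots,x_k,w\}$. The set $\{w\}$ has size at most $1$, so at least one of $p,q$, say $p$, lies in $\{x_1,\dots,x_k\}=N_G(u)\setminus\{a\}$. Then $p\sim u$, and replacing the edge $p\!-\!a$ of $P$ by the length-two path $p\!-\!u\!-\!a$ yields a Hamiltonian $s\!\to\!t$ path of $G$.

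I do not expect a serious obstacle; the proof is essentially a bookkeeping exercise once the cases are laid out. The one thing worth emphasising is where each hypothesis is consumed: the bound $|N_G(a)\setminus N_G[u]|\le 1$ is invoked precisely in the last case to force at least one of $a$'s two path-neighbors in $P$ to lie in $N_G(u)$, while $d_G(u)\ge 3$ is needed only in the third case in order to guarantee that some $x_i\in N_G(u)\setminus\{a\}$ is distinct from $t$.
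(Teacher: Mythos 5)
Your proposal is correct and follows essentially the same route as the paper: the same three-way case split (an endpoint in $\{u,a\}$, only $a$ an endpoint, neither), using Hamiltonian-connectedness of $G-u$ with the edge-replacement $pa \mapsto p\,u\,a$ at $a$ in the main case, and Hamiltonian-connectedness of $G-\{u,a\}$ plus a path through $a,u$ when $a$ is an endpoint. The hypotheses $d_G(u)\ge 3$ and $|N_G(a)\setminus N_G[u]|\le 1$ are consumed in exactly the same places as in the paper's proof.
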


\begin{proof}
	By the given condition, we may
	assume that $\{a,b,c\}\subseteq N_G(u)$.
	Since $N_G[u]\subseteq N_G[a]$, 
	$\{u, b,c\}\subseteq N_G(a)$,
	as shown in Fig.~\ref{HC-1}. 
	Let $G'=G-u$ and $G''=G-\{u,a\}$.
	We only need to prove that 
	for 	any two vertices $x$ and $y$
	in $G$, 
	there is a Hamiltonian path in $G$ 
	connecting $x$ and $y$.
	By the given conditions,  
	$G'$ has a Hamiltonian path $P$
	connecting $b$ and $y$. 
	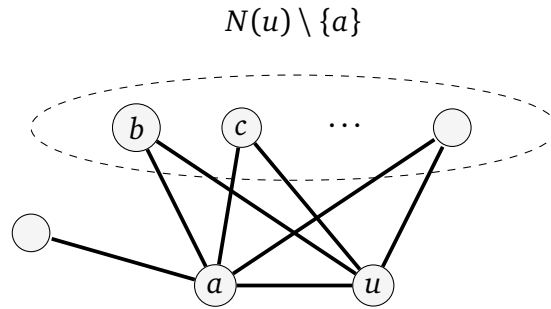
\begin{figure}[h!]
		\centering 
	\begin{tikzpicture}[scale=0.7]
	\begin{pgfonlayer}{nodelayer}
\tikzset{whitenode/.style={fill={rgb,255: red,245; green,245; blue,245}, draw=black, shape=circle, minimum size=0.1cm, inner sep=2.5pt,scale=1.0}}
\tikzset{whitenode1/.style={fill={rgb,255: red,245; green,245; blue,245}, draw=black, shape=circle, minimum size=0.1cm, inner sep=0.3pt,scale=5}}
\draw[dashed] (-2,3) ellipse (5cm and 1cm);
		\node [style=whitenode] (0) at (-5, 3) {$b$};
		\node [style=whitenode] (1) at (-3, 3) {$c$};
		\node [style=whitenode1] (2) at (1, 3) {};
		\node [style=none] (3) at (-1, 3) {$\cdots$};
		\node [style=whitenode] (4) at (-3.5, 0) {$a$};
		\node [style=whitenode] (5) at (-0.5, 0) {$u$};
		\node [style=whitenode1] (6) at (-7, 1) {};
		\node [style=none] (7) at (-2, 5) {$N(u)\setminus \{a\}$};
	\end{pgfonlayer}
	\begin{pgfonlayer}{edgelayer}
		\draw [style={black_thick}] (4) to (5);
		\draw [style={black_thick}] (5) to (1);
		\draw [style={black_thick}] (5) to (0);
		\draw [style={black_thick}] (4) to (0);
		\draw [style={black_thick}] (4) to (1);
		\draw [style={black_thick}] (4) to (2);
		\draw [style={black_thick}] (2) to (5);
		\draw [style={black_thick}] (6) to (4);
	\end{pgfonlayer}
\end{tikzpicture}

		\caption{$\{a,b,c\}\subseteq N_G(u)\subseteq N_G[a]$ and $|N_G(a)\setminus N_G[u]|\le 1$
		}     
		\label{HC-1}
	\end{figure}

	We divide the proof into three cases.
	
	\begin{case}
		$u\in \{x,y\}$.
	\end{case}
	
	By symmetry, assume that $x=u$. Then $y\in V(G')$. 
	Obviously, $y\ne a$ or $y\ne b$. 
	Assume that $y\ne b$.
	Combining $P$ and edge $ub$ produces a  Hamiltonian path in $G$ between $u$ (i.e., $x$) and $y$.

	\begin{case}
		$u\notin \{x,y\}$ and $a\notin \{x,y\}$. 
	\end{case}
	
	Since $|N_G(a)\setminus N_G[u]|\le 1$,  at least one edge $e^*$ in the set $\{av: v\in N_G(u)\}$ is on $P$.
	Without loss of generality, 
	assume that $e^* =ab$. 
	Let $P'$ be the path in $G$ 
	obtained from $P$ by replacing 
	edge $ab$ by the path $aub$. 
	Clearly, 
	$P'$ is  a Hamiltonian path in $G$
	connecting $x$ and $y$.
	
	\begin{case}
		$u\notin \{x,y\}$ and $a\in \{x,y\}$. 
	\end{case}
	
	By symmetry, assume that $a=x$.
	By the given condition,
	$G''$ is Hamiltonian-connected. 
	Then, 
	$G''$ has Hamiltonian paths
	$P_1$ connecting $b$ and $y$
	when $b\ne y$,
	and 
	$P_2$ connecting $c$ and $y$
	when $c\ne y$.
	If  $b\ne y$,  then 
	combining path $P_1$ 
	and path $aub$ yields a 
	Hamiltonian path  in $G$ 
	connecting $a$ (i.e., $x$) and $y$. 
	If $b=y$, 
	combining path $P_2$ 
	and path $auc$ yields a 
	Hamiltonian path  in $G$ 
	connecting $a$ (i.e., $x$) and $y$. 
	
	Hence the result holds.	
\end{proof}

By the definition of $k$-trees,
some conclusions on simplicial vertices of 
$k$-trees can be obtained.

\begin{lemma}\label{Chord} 
	Let $G$ be a $k$-tree of order $n$.
	If $n\ge k+2$, then $G$ has the following properties:
	\begin{enumerate}
		\item $G$ has at least two simplicial vertices; and 
		\item any two simplicial vertices in $G$ are not adjacent, and 
		\item for any 	simplicial vertex $v$ of $G$, 	if $n\ge k+3$, 
		then 	$G-v$ does not have more 	simplcial vertices than $G$ has.
	\end{enumerate} 
\end{lemma}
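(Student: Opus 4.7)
The plan is to establish part (ii) first through a direct minimum-degree argument, then deduce part (i) by induction making use of (ii), and finally obtain part (iii) as a short consequence of (ii) applied to $G-v$.

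For (ii), I will rely on two standing facts about $k$-trees on $n \ge k+1$ vertices: such a graph is $k$-connected (so has minimum degree at least $k$) and contains no $(k+2)$-clique (by construction). Together these imply that every simplicial vertex has degree exactly $k$, since its closed neighborhood is a clique of size $d(v)+1\le k+1$ and $d(v)\ge k$. Supposing now that $w_1,w_2$ are adjacent simplicial vertices, since $w_2 \in N_G(w_1)$ and $N_G(w_1)$ is a clique, every vertex of $N_G(w_1)\setminus\{w_2\}$ is adjacent to $w_2$; combined with $|N_G(w_i)|=k$, this forces $N_G[w_1]=N_G[w_2]$, a common $(k+1)$-clique $C$. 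Then $G-w_1$ is a $k$-tree on $n-1\ge k+1$ vertices (it is chordal, still $k$-connected since removing a simplicial vertex cannot disconnect the graph below the threshold, and $(k+2)$-clique-free, hence a $k$-tree by Lemma~\ref{prop1}), so it has minimum degree at least $k$; but $w_2$ satisfies $N_{G-w_1}(w_2)=C\setminus\{w_1,w_2\}$, of size $k-1$, a contradiction.

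For (i), I will induct on $n\ge k+2$. The base case $n=k+2$ is direct: $G$ is obtained from $K_{k+1}$ on $\{u_1,\dots,u_{k+1}\}$ by attaching a new vertex $v$ to a $k$-clique $\{u_1,\dots,u_k\}$, and one checks that precisely $v$ and $u_{k+1}$ are simplicial. For the inductive step $n\ge k+3$, pick a simplicial vertex $u$ of $G$; then $G-u$ is a $k$-tree on $n-1\ge k+2$ vertices, so by the inductive hypothesis it contains distinct simplicial vertices $w_1,w_2$. If both fail to be simplicial in $G$, then both must be neighbors of $u$, hence both lie in the $k$-clique $N_G(u)$, forcing $w_1\sim w_2$ in $G$ and therefore also in $G-u$; but by (ii) applied to $G-u$ this is impossible. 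Thus at least one of $w_1,w_2$ remains simplicial in $G$, which together with $u$ gives the two desired simplicial vertices.

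For (iii), observe first that if $w$ is simplicial in $G$ and $w\ne v$, then $N_{G-v}(w)\subseteq N_G(w)$ is still a clique, so $w$ remains simplicial in $G-v$. Hence
\[
s(G-v)=s(G)-1+|N|,
\]
where $N$ denotes the set of vertices simplicial in $G-v$ but not in $G$; it suffices to show $|N|\le 1$. Any $w\in N$ must satisfy $w\in N_G(v)$, because otherwise $N_{G-v}(w)=N_G(w)$ and the simplicial status would not change. If $w_1,w_2\in N$ were distinct, then $w_1,w_2\in N_G(v)$, a $k$-clique in $G$ and still a clique in $G-v$, so $w_1\sim w_2$ in $G-v$; but $G-v$ is a $k$-tree on $n-1\ge k+2$ vertices, and (ii) applied to $G-v$ forbids two adjacent simplicial vertices. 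Hence $|N|\le 1$ and $s(G-v)\le s(G)$.

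The main obstacle is the degree-counting in (ii) that forces the coincidence $N_G[w_1]=N_G[w_2]$; once that is secured, the contradiction with the minimum-degree lower bound in $G-w_1$ is immediate, and the cascade (ii) $\Rightarrow$ (i) $\Rightarrow$ (iii) is short, relying only on the single principle that two adjacent simplicial vertices cannot coexist in a $k$-tree of order at least $k+2$.
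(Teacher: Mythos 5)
Your proof is correct, but it is organized quite differently from the paper's. The paper proves all three parts by a single induction on $n$: remove a simplicial vertex $v$, apply the inductive hypothesis to the $k$-tree $G-v$, and observe how the set $S(G)$ of simplicial vertices relates to $S(G-v)$ (namely $S(G)=S(G-v)\cup\{v\}$ or $S(G)=(S(G-v)\setminus\{u\})\cup\{v\}$ for the at most one neighbor $u$ of $v$ in $S(G-v)$); the justification of these set identities is left implicit. You instead prove (ii) first by a direct, non-inductive argument -- every simplicial vertex of a $k$-tree on at least $k+1$ vertices has degree exactly $k$, so two adjacent simplicial vertices would share their closed neighborhood, and deleting one leaves the other with degree $k-1$ in a graph of minimum degree at least $k$ -- and then deduce (i) by induction and (iii) by the explicit count $s(G-v)=s(G)-1+|N|$ with $|N|\le 1$. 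Your route makes explicit precisely the facts the paper's terse set identities rely on (that at most one neighbor of the removed vertex is simplicial in the smaller graph, and that such a neighbor cannot stay simplicial), so it is more self-contained in that respect; the price is that you lean on standing facts not stated as lemmas in the paper -- that $k$-trees on at least $k+1$ vertices are $k$-connected and $(k+2)$-clique-free, and that deleting any simplicial vertex again yields a $k$-tree (via Lemma~\ref{prop1}, after the one-line observation that a separator of $G-w_1$ of size less than $k$ would also separate $G$ because the clique $N_G(w_1)$ meets only one component) -- facts the paper also uses implicitly, attributing them to the definition. Two minor polish points: spell out that clique-separator observation rather than the parenthetical ``cannot disconnect below the threshold,'' and note that (ii) can be finished even more quickly without passing through $G-w_1$ being a $k$-tree, since $N_G[w_1]=N_G[w_2]=C$ with $|C|=k+1<n$ makes $C\setminus\{w_1,w_2\}$ a $(k-1)$-separator of $G$, contradicting $k$-connectivity directly.
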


\begin{proof}
The result is obvious when $n=k+2$. 
For $n>k+2$, let $v$ be a simplicial vertex. Then $G-v$ is a $k$-tree 
of order $n-1\ge k+2$. 
By induction,  $G-v$ has all these three properties. 

Let $S(G)$ denote  
the set of simplicial vertices in $G$.
If no vertex in $N_G(v)$ belongs to 
$S(G-v)$, 
then 
$S(G)=S(G-v)\cup \{v\}$.
If some vertex $u\in N_G(v)$ belongs to 
$S(G-v)$, then $S(G)=
(S(G-v)\setminus \{u\})\cup \{v\}$.

Therefore $G$ also has these three properties. 
\end{proof}

Applying Proposition~\ref{Ham-con},
we can obtain the following 
conclusion on $k$-trees.

\begin{proposition}
	\label{ktree-HC}
	For any $k$-tree $G$, where $k\ge 3$, 
	if $|V(G)|\ge k+2$ 
	and $G$ has only two simplicial vertices, then 
	$G$ is Hamiltonian-connected.
\end{proposition}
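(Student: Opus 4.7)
The plan is to proceed by induction on $n := |V(G)|$, with the base case $n = k+2$ handled by a direct check and the inductive step driven by Proposition~\ref{Ham-con}. For the base case I would observe that a $k$-tree on $k+2$ vertices is obtained from $K_{k+1}$ by attaching one simplicial vertex to a $k$-clique, so $G \cong K_{k+2} - e$ for a single missing edge $e$. For $k \ge 3$ (hence $|V(G)| \ge 5$), a short case split on how the desired endpoints of a Hamiltonian path meet the two endpoints of $e$ yields the conclusion.

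For the inductive step assume $n \ge k+3$, and let $u$ and $b$ denote the two simplicial vertices of $G$. Since $G$ is a $k$-tree of order at least $k+2$, both $u$ and $b$ have degree exactly $k$. The third part of Lemma~\ref{Chord} applied to $G$ shows that $G-u$ has at most two simplicial vertices, while the first part applied to $G-u$ (whose order is $n-1 \ge k+2$) shows it has at least two. Hence $G-u$ has exactly two simplicial vertices; one of them is $b$ (since $ub \notin E(G)$ by the second part of Lemma~\ref{Chord}, the neighborhood of $b$ is unchanged when $u$ is deleted), and the other, call it $a$, must lie in $N_G(u)$, because otherwise $N_{G-u}(a) = N_G(a)$ would already be a clique and $a$ would be a third simplicial vertex of $G$.

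With $u$ playing the role of ``$u$'' and this $a$ playing the role of ``$a$'' in Proposition~\ref{Ham-con}, the hypotheses read as follows: $d_G(u) = k \ge 3$; $N_G[u] \subseteq N_G[a]$ because $a \in N_G(u)$ and $N_G(u)$ is a clique; and $d_G(a) = k+1$ (its degree in $G-u$ is $k$, as $a$ is simplicial there, plus the edge $au$), giving $|N_G(a) \setminus N_G[u]| = 1$. It then remains to check that $G-u$ and $G-\{u,a\}$ are Hamiltonian-connected. The graph $G-u$ is a $k$-tree of order $n-1 \ge k+2$ with exactly two simplicial vertices, so the induction hypothesis covers it. The graph $G-\{u,a\} = (G-u)-a$ is a $k$-tree of order $n-2$: if $n-2 = k+1$ it equals $K_{k+1}$ and is trivially Hamiltonian-connected; otherwise $n-1 \ge k+3$, so applying the third and first parts of Lemma~\ref{Chord} inside $G-u$ again yields exactly two simplicial vertices, and the induction hypothesis applies.

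The main obstacle I anticipate is the bookkeeping of simplicial vertices: to invoke Proposition~\ref{Ham-con} I must maintain the ``exactly two simplicial vertices'' invariant under both deletions $u$ and $\{u,a\}$, which is what drives the case split on whether $n-2 = k+1$ and the careful choice of $a$ as the \emph{new} simplicial vertex of $G-u$, distinct from $b$.
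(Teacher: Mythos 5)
Your proposal is correct and follows essentially the same route as the paper: induction on $|V(G)|$ with base case $K_{k+2}-e$, deleting a simplicial vertex $u$, identifying the new simplicial vertex $a$ of $G-u$ inside $N_G(u)$, verifying $N_G[u]\subseteq N_G[a]$ and $|N_G(a)\setminus N_G[u]|=1$, and invoking Proposition~\ref{Ham-con} together with Lemma~\ref{Chord} to maintain the two-simplicial-vertex invariant. Your write-up actually makes explicit a few bookkeeping details (why $a\in N_G(u)$, the $n-2=k+1$ case for $G-\{u,a\}$) that the paper only sketches.
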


\begin{proof}
	Let $G$ be a $k$-trees.
	Then $|V(G)|\ge k+1$.
	If $k\le k+1$, then 
	$G$ is a complete graph
	and thus it is Hamiltonian-connected.  
		If $|V(G)|=k+2$, then $G=K_{k+2}-e$,
	which is also Hamiltonian-connected.
	
	Now assume 
	the result holds for all $k$-trees
	of order less than $n$, where 
	$n\ge k+3$.
	Let $G$ be any $k$-tree of order $n$.
	By Lemma~\ref{Chord} (i), $G$ has at least two non-adjacent simplicial vertices. By the given conditions,
	$G$ has exactly two simplicial vertices, say $u$ and $v$.
	
	By the definition of $k$-tree, $G':=G-u$ is a $k$-tree of order $n-1$ 
	($\ge k+2$). By Lemma~\ref{Chord} (iii), $G'$ has exactly two simplcial vertices and 
	one of them must be $v$. 
	Let $a$ be another simplicial vertex of $G'$. 
	Clearly, $a\in N_G(u)$. 
	By the inductive assumption, 
	both $G-u$ and 	$G'-a$ ($=G-\{u,a\}$) are Hamiltonian connected. 
	
	Since $ua\in E(G)$, $u$ is a simplicial 
	vertex of $G$ with $d_G(u)=k\ge 3$
	and $a$ is a simplicial 
	vertex of $G'$ with $d_{G'}(a)=k$,
	we have 
	$
	N_G[u]\subseteq N_G[a]$ and 
	$
	|N_G(a)\setminus N_G[u]|=1.
	$
	By Proposition~\ref{Ham-con},
	$G$ is Hamiltonian-connected. 
	
	Hence the result holds.
\end{proof}

 Now we are going to prove 
 Theorem \ref{4_conn}.

 \emph{Proof of Theorem \ref{4_conn}}: 
 Let $G$ be a $1$-planar graph which is $4$-connected and chordal.
 
 Let $n$ be the order of $G$. 
 Then $n\ge 5$. 
 If $n=5$, then $G\cong K_5$.
 If $n=6$, then $G\cong K_6-e$ 
 or $K_6$.
 Clearly, $G$ is Hamiltonian-connected
 when $n\in  \{5,6\}$.
 Now assume that $n\ge 7$. 
 By Proposition~\ref{main0},
 $G$ is a $4$-tree. 
 In the following, we first show that 
 $G$ has exactly two simplicial vertices.

If $n=7$, by Proposition~\ref{4t-order7},
$G$ has a $1$-planar drawing
isomorphic to $B_1, B_2$ or $B_3$
shown in Fig.~\ref{4t-or7},
implying that $G$ is a $4$-tree with exactly two simplicail vertices. 
 
 When $n\ge 8$, 
 by Lemma~\ref{set-phi} 
 and Proposition~\ref{main-4t},
 $G$ has exactly two two simplicail vertices. 
 
 The result then follows from Proposition~\ref{ktree-HC}. 
 \proofend

\section{Unsolved problems}

 The toughness of a graph is  closely associated with Hamiltonicity. 
The \textit{toughness} of a graph $G$,
denoted by $\tau(G)$, 
is the minimum value of $\frac{|X|}{c(G-X)}$ over all  
non-empty subsets $X$ of $V(G)$
with $c(G-X)>1$, where  
$c(H)$ is the number of components of a graph $H$.
The toughness of a complete graph is defined as being infinite. 
We say that a graph is \textit{$t$-tough} if its toughness is at least $t$. 
There are some known results on 
the Hamiltonicity of chordal graphs 
in terms of their toughness. 
For example, every 10-tough chordal graph is Hamiltonian \cite{Kabela},
and  every %\red{more than 1-tough}
chordal planar graph of order 
at least three and toughness greater than one is Hamiltonian \cite{Bohme}.

% Every \red{more than 1-tough} chordal planar graph \red{with at least three vertices of toughness greater than one} is Hamiltonian \cite{Bohme}.

Note that a $t$-tough graph is always $\lceil 2t \rceil $-vertex-connected \cite{Chvatal}. So the corollary below follows directly from Theorem \ref{4_conn}.

\begin{corollary}\label{corol1}
	Every $1$-planar chordal graph 
	$G$ with $\tau(G)>\frac{3}{2}$
	is  Hamiltonian-connected.
\end{corollary}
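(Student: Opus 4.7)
The plan is to derive this corollary as a near-immediate consequence of Theorem~\ref{4_conn}. The only task is to translate the toughness hypothesis $\tau(G)>\frac{3}{2}$ into the connectivity hypothesis $\kappa(G)\ge 4$ required by Theorem~\ref{4_conn}; chordality and $1$-planarity are inherited directly from the hypotheses of the corollary.

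First I would dispose of the trivial case where $G$ is complete. In that case $G\cong K_n$ for some $n\ge 1$, and $K_n$ is Hamiltonian-connected whenever $n\ne 2$; for $n=2$ the single edge is itself a Hamiltonian path between the two vertices, so there is nothing to prove. (One can also observe that $\tau(K_n)=\infty>\frac{3}{2}$ is automatic.) So assume from now on that $G$ is not complete, which guarantees the existence of at least one separator.

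Next I would show $\kappa(G)\ge 4$ via the standard toughness--connectivity inequality, reproved here for completeness so as not to rely on an external statement. Let $X\subseteq V(G)$ be a separator of minimum size, so that $|X|=\kappa(G)$ and $c(G-X)\ge 2$. By the very definition of toughness,
\[
\tau(G)\ \le\ \frac{|X|}{c(G-X)}\ \le\ \frac{\kappa(G)}{2},
\]
so $\kappa(G)\ge 2\tau(G)>3$. Since $\kappa(G)$ is an integer, this forces $\kappa(G)\ge 4$. Now $G$ is a $1$-planar, chordal, $4$-connected graph, and Theorem~\ref{4_conn} applies directly to give that $G$ is Hamiltonian-connected.

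There is essentially no obstacle in this argument; the only delicate point worth flagging is the role of the \emph{strict} inequality $\tau(G)>\frac{3}{2}$. Without strictness one would only obtain $\kappa(G)\ge 3$, which is insufficient for Theorem~\ref{4_conn}, and indeed Remark~\ref{r1}(iii) exhibits $3$-connected $1$-planar chordal graphs that are not even Hamiltonian. So the strict bound $\tau(G)>\frac{3}{2}$ is precisely what is needed to push the integer-valued $\kappa(G)$ up from $3$ to $4$ and unlock the main theorem.
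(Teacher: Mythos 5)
Your proposal is correct and follows essentially the same route as the paper: deduce $\kappa(G)\ge 4$ from $\tau(G)>\frac{3}{2}$ via the standard toughness--connectivity inequality (which the paper simply cites from Chv\'{a}tal as ``$t$-tough implies $\lceil 2t\rceil$-connected'') and then invoke Theorem~\ref{4_conn}. Your inline proof of the inequality and the explicit handling of the complete case are minor refinements, not a different argument.
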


 Chv\'{a}tal (1973) \cite{Chvatal} conjectured that all graphs which are more than $\frac{3}{2}$-tough are Hamiltonian, but this was disproved by Bauer et al. (2000), who showed that not every 2-tough graph is Hamiltonian \cite{Bauer}. Chvátal's toughness conjecture posits that there exists a toughness threshold $t_0$ above which $t_0$-tough graphs are always Hamiltonian; its truth remains unresolved.
  
  Corollary \ref{corol1}  states that every 1-planar chordal graph with toughness greater than $\frac{3}{2}$ is Hamiltonian-connected.  Naturally, we pose the following problem:

\begin{problem}\label{pp1}
Is there a  $1$-planar chordal graph with toughness   $\frac{3}{2}$ non-Hamiltonian?
\end{problem}

Notice that neither the examples in \cite{Nishizeki} nor our examples in Remark \ref{r1} can be used for Problem \ref{pp1}. This is because the toughness of every chordal  planar non-Hamiltonian graph in \cite{Nishizeki} is always 1, while our examples can be shown to be at most $\frac{5}{6}$.

\section{Acknowledgment}

 The work is supported by the National Natural 
	Science Foundation of China (Grant No. 
	12271157, 12371340, 12371346), the Natural Science Foundation of Hunan Province, China (Grant 
	No. 2022JJ30028),  Hunan Provincial Department of Education(No. 21A0590) and Natural Science Foundation of Changsha (No. kq2208001).

We claim that there is no conflict of interest in our paper.
No data was used for the research described in the article.

 \nocite{*}
%\bibliographystyle{bib}
%\bibliography{pap.bib}
\end{document}